\definecolor{lightblue}{RGB}{20,147,255}
\begin{document}

\title[Gromov--Hausdorff distances, Borsuk--Ulam theorems, and Vietoris--Rips complexes]{Gromov--Hausdorff distances, Borsuk--Ulam theorems,\\ and Vietoris--Rips complexes}

\pagestyle{plain}

\author{Henry Adams}
\address[HA]{Department of Mathematics, University of Florida, Gainesville, FL 32611, USA}
\email{henry.adams@ufl.edu}

\author{Johnathan Bush}
\address[JB]{Department of Mathematics and Statistics, James Madison University, Harrisonburg, VA 22807, USA}
\email{bush3je@jmu.edu}

\author{Nate Clause}
\address[NC]{Department of Mathematics,
The Ohio State University, Columbus, OH 43202, USA}
\email{clause.15@osu.edu}

\author{Florian Frick}
\address[FF]{Dept.\ Math.\ Sciences, Carnegie Mellon University, Pittsburgh, PA 15213, USA}
\email{frick@cmu.edu} 

\author{Mario G\'{o}mez}
\address[MG]{Department of Mathematics,
The Ohio State University, Columbus, OH 43202, USA}
\email{gomezflores.1@osu.edu}

\author{Michael Harrison}
\address[MH]{Institute for Advanced Study, Princeton, NJ 08540, USA}
\email{mah5044@gmail.com}

\author{R.~Amzi Jeffs}
\address[RAJ]{Dept.\ Math.\ Sciences, Carnegie Mellon University, Pittsburgh, PA 15213, USA}
\email{amzijeffs0@gmail.com}

\author{Evgeniya Lagoda}
\address[EL]{Institut f\"ur Mathematik, Freie Universit\"at Berlin, 14195 Berlin, Germany}
\email{e.lagoda@fu-berlin.de}

\author{Sunhyuk Lim}
\address[SL]{Sungkyunkwan University, 16419 Suwon-si, Gyeonggi-do, Republic of Korea}
\email{lsh3109@skku.edu}

\author{Facundo M\'emoli}
\address[FM]{Department of Mathematics,
Rutgers University, Piscataway, NJ 08854, USA}
\email{facundo.memoli@rutgers.edu}

\author{Michael Moy}
\address[MM]{Energy Institute, Colorado State University, Fort Collins, CO 80524, USA}
\email{michael.moy@colostate.edu}

\author{Nikola Sadovek}
\address[NS]{Institut f\"ur Mathematik, Freie Universit\"at Berlin, 14195 Berlin, Germany}
\email{nikola.sadovek@fu-berlin.de}

\author{Matt Superdock}
\address[MS]{Dept.\ Mathematics and Computer Science, Rhodes College, Memphis, TN 38112, USA}
\email{superdockm@rhodes.edu}

\author{Daniel Vargas-Rosario}
\address[DVR]{Department of Mathematics, University of Colorado, Boulder, CO 80309, USA}
\email{Daniel.Vargas-Rosario@colorado.edu}

\author{Qingsong Wang}
\address[QW]{Hal{\i}c{\i}o\u{g}lu Data Science Institute, University of California San Diego, CA 92093, USA}
\email{qswang92@gmail.com}

\author{Ling Zhou}
\address[LZ]{Department of Mathematics,
Department of Mathematics, Duke University, Durham, NC 27710, USA}
\email{ling.zhou@duke.edu}

\subjclass[2020]{51F30, 53C23, 55N31, 55P91}

\keywords{Gromov--Hausdorff distance, Borsuk--Ulam theorems, Vietoris--Rips complexes, modulus of discontinuity.}

\thanks{
This paper is the result of a polymath-style collaboration.
JB was supported by the NSF-Simons Southeast Center for Mathematics and Biology through NSF grant DMS 1764406 and Simons Foundation grant 594594.
FF was supported by NSF grant DMS 1855591, NSF CAREER Grant DMS 2042428, and a Sloan Research Fellowship.
MH was supported by the Institute for Advanced Study through the NSF Grant DMS 1926686.
RAJ was supported by NSF grant DMS 2103206.
FM was supported by  NSF grants DMS 1547357, CCF 1740761 and IIS 1901360, and also by BSF grant 2020124.
NS was funded by the Deutsche Forschungsgemeinschaft (DFG, German Research Foundation) under Germany's Excellence Strategy - The Berlin Mathematics Research Center MATH+ (EXC-2046/1, project ID 390685689, BMS Stipend).
}

\begin{abstract}
We explore emerging relationships between the Gromov--Hausdorff distance, Borsuk--Ulam theorems, and Vietoris--Rips simplicial complexes.
The Gromov--Hausdorff distance between two metric spaces $X$ and~$Y$ can be lower bounded by the distortion of (possibly discontinuous) functions between them.
The more these functions must distort the metrics, the larger the Gromov--Hausdorff distance must be.
Topology has few tools to obstruct the existence of discontinuous functions.
However, an arbitrary function $f\colon X\to Y$ induces a continuous map between their Vietoris--Rips simplicial complexes, where the allowable choices of scale parameters depend on how much the function $f$ distorts distances.
We can then use equivariant topology to obstruct the existence of certain continuous maps between Vietoris--Rips complexes.
With these ideas we bound how discontinuous an odd map between spheres $S^k\to S^n$ with $k>n$ must be, generalizing a result by Dubins and Schwarz (1981), which is the case $k=n+1$.
As an application, we recover or improve upon all of the lower bounds from Lim, M{\'e}moli, and Smith (2022) on the Gromov--Hausdorff distances between spheres of different dimensions.
We also provide new upper bounds on the Gromov--Hausdorff distance between spheres of adjacent dimensions.
\end{abstract}

\maketitle

\section{Introduction}
\label{sec:intro}
The Gromov--Hausdorff distance between metric spaces $X$ and $Y$, denoted by $d_{\gh}(X,Y)$, quantifies the extent to which $X$ and $Y$ fail to be isometric.
The Gromov--Hausdorff distance is used in many areas of geometry~\cite{BuragoBuragoIvanov,cheeger1997structure,colding1996large,petersen2006riemannian}.
In applications to shape and data comparison/classification, one desires to estimate either the Gromov--Hausdorff distance between spaces~\cite{ms04,ms05,memoli2007use} or the Gromov--Wasserstein distance~\cite{memoli2011gromov,sturm2012space,peyre2019computational,alvarez2018gromov}, which is one of its optimal transport induced variants.
However, both distances are hard to compute, both analytically and algorithmically~\cite{memoli2012some,schmiedl2015shape,schmiedl2017computational,agarwal2018computing}.
Despite the interest in this type of distances, exact values of the Gromov--Hausdorff distance are known in only a small number of cases; see Section~\ref{sec:related}.

Our paper is the result of a polymath-style collaboration, which began as an attempt to explain the following motivating question. 
In~\cite{lim2023gromov}, Lim, M\'{e}moli, and Smith prove the first strong bounds for the Gromov--Hausdorff distance between spheres of different dimensions.
We were surprised to observe that the values in~\cite{lim2023gromov} (see Table~\ref{table:gh}) had recently appeared in the literature in a different context, namely as the scale parameters when Vietoris--Rips complexes of spheres change homotopy type.
The \emph{Vietoris--Rips complex} $\vr{X}{r}$, which coarsens a metric space $X$ with respect to some scale parameter $r \geq 0$, is commonly used in applied topology to approximate the shape of a dataset~\cite{Carlsson2009}, and has its historical origins in algebraic topology~\cite{Vietoris27} and geometric group theory~\cite{Gromov}.
The lower bound $2\cdot d_{\gh}(S^n,S^{n+1})\ge r_n$ from~\cite{lim2023gromov} reminded us of the fact that the first change in homotopy type of $\vr{S^n}{r}$ occurs when $r=r_n$~\cite{AAF,lim2020vietoris,katz1989diameter}, where $r_n$ is the geodesic distance between two vertices of the regular $(n+1)$-simplex inscribed in $S^n$.
Similarly, the equality $2\cdot d_{\gh}(S^1,S^2) = 2\cdot d_{\gh}(S^1,S^3)= r_1 = \frac{2\pi}{3}$ from~\cite{lim2023gromov} reminded us of the homotopy equivalence $\vr{S^1}{\frac{2\pi}{3}+\varepsilon}\simeq S^3$ from~\cite{AA-VRS1,ABF,moy2023vietoris}. 

\begin{question-motivating}
What is the connection between the Gromov--Hausdorff distance between spheres and Vietoris--Rips complexes of spheres?
\end{question-motivating}

To provide an answer to this question\footnote{See Section~\ref{sec:related} for other connections, including the stability of persistent homology.}, we combine and extend two generalizations of the Borsuk--Ulam theorem: one by Dubins and Schwarz~\cite{dubins1981equidiscontinuity} used in Lim, M\'{e}moli and Smith~\cite{lim2023gromov} to study the Gromov--Hausdorff distance, and the second by Adams, Bush, and Frick~\cite{ABF,ABF2} on the equivariant topology of Vietoris--Rips complexes.
The Borsuk--Ulam theorem, a classic result in equivariant topology, states that there is no continuous $\Z/2$ equivariant map $f \colon S^k \to S^n$ for $k > n$~\cite{matousek2003using}.
Here the $\Z/2$ action on each sphere is the antipodal map, and $f$ is called \emph{$\Z/2$ equivariant} or \emph{odd} if it commutes with the $\Z/2$ actions; that is, if $f(-x) = -f(x)$ for all $x \in S^k$.

We relate Gromov--Hausdorff distances, Borsuk--Ulam theorems, and Vietoris--Rips complexes as follows.
Estimating the Gromov--Hausdorff distance $d_{\gh}(X,Y)$ involves bounding the \emph{distortion} $\dis(f)$ of a (possibly discontinuous) function $f:X\to Y$, which measures the extent to which $f$ fails to preserve distances: the more that functions between $X$ and $Y$ must distort the metrics, the larger $d_{\gh}(X,Y)$ must be.
When $X$ and $Y$ are spheres, Lim, M\'{e}moli and Smith~\cite{lim2023gromov} show that it suffices to consider odd functions; this is the so-called ``helmet trick''.
We transform an odd function $f\colon S^k \to S^n$ into a \emph{continuous} odd map $|\vr{S^k}{r}| \to |\vr{S^n}{r+\dis(f)}|$ for any $r \geq 0$, letting the Vietoris--Rips complexes absorb discontinuities.
We then obstruct the existence of such maps with the equivariant topology of Vietoris--Rips complexes, measured via the following quantity.

\begin{definition}
\label{def:cnk}
For $k\ge n$, we define
\[c_{n,k} \coloneqq \inf\{r\ge 0 \mid \text{there exists an odd map }S^k \to \vr{S^n}{r}\}.\]
\end{definition}

Due to a theorem of Hausmann~\cite{Hausmann1995}, we have a homotopy equivalence $\vr{S^n}{r} \simeq S^n$ for sufficiently small $r$, and moreover there is an odd map $\vr{S^n}{r} \to S^n$.
The Borsuk--Ulam theorem then implies that no odd map $S^k \to \vr{S^n}{r}$ exists for such $r$ unless $k \le n$.
In particular, $c_{n,n}=0$, but $c_{n,k} > 0$ for $k > n$.
Therefore, intuitively, the quantity $c_{n,k}$ represents the amount by which $S^n$ needs to be ``thickened'' until it admits an odd map from $S^k$.

\smallskip
Our main result is the following lower bound on $d_{\gh}(S^n,S^k)$.

\begin{table}
\def\arraystretch{1.2}
\begin{tabular}{| >{ $} c <{$} | >{\centering $} m{.08\textwidth} <{$} | >{\centering $} m{.08\textwidth} <{$} | >{\centering $} m{.08\textwidth} <{$} | >{ $} c <{$} | >{ $} c <{$} | >{ $} c <{$} | >{ $} c <{$} |}
\hline
_n \backslash ^k & 1 & 2 & 3 & 4 & 5 & 6 & 7 \\
\hline
1 & 0 & \frac{2\pi}{3} & \frac{2\pi}{3} & \big[\textcolor{blue}{\frac{4\pi}{5}},\,\pi\big) & \big[\textcolor{blue}{\frac{4\pi}{5}},\,\,\pi\big) & \big[\textcolor{blue}{\frac{6\pi}{7}},\,\,\pi\big) & \big[\textcolor{blue}{\frac{6\pi}{7}},\,\,\pi\big) \\
\hline
2 & & 0 & r_2 & \big[r_2,\,\,\pi\big) & \big[\textcolor{blue}{ c_{2,5}},\pi\big) & \big[\textcolor{blue}{ c_{2,6}},\pi\big) & \big[\textcolor{blue}{ c_{2,7}},\pi\big) \\
\hline
3 & & & 0 & \big[r_3,\textcolor{blue}{\frac{2\pi}{3}}\big] & \big[r_3,\,\,\,\pi\big) & \big[\textcolor{blue}{c_{3,6}},\pi\big) & \big[\textcolor{blue}{c_{3,7}},\pi\big) \\
\hline
4 & & & & 0 & \big[r_4,\textcolor{blue}{\frac{2\pi}{3}}\big] & \big[r_4,\,\,\,\pi\big) & \big[\textcolor{blue}{c_{4,7}},\pi\big) \\
\hline
5 & & & & & 0 & \big[r_5,\textcolor{blue}{\frac{2\pi}{3}}\big] & \big[r_5,\,\,\,\pi\big) \\
\hline
6 & & & & & & 0 & \big[r_6,\textcolor{blue}{\frac{2\pi}{3}}\big] \\
\hline
7 & & & & & & & 0 \\
\hline
\end{tabular}
\begin{tikzcd}[row sep=tiny]
	\begin{tikzpicture}[scale=.7]
    \filldraw[fill=none](0,0) circle (1);
    \draw (0,1)--(.86,-.5)--(-.86,-.5)--(0,1);
    \node at (0, -.85)  {$r_1$};
	\end{tikzpicture}
	\hspace{5mm}
	\begin{tikzpicture}[scale=.7]
    \filldraw[fill=none](0,0) circle (1);
    \draw (0,1)--(.86,-.5)--(-.86,-.5)--(0,1);
    \node at (0, -.85)  {$r_2$};
    \draw [dashed] (0,0)--(.86,-.5);
    \draw [dashed] (0,0)--(-.86,-.5);
    \draw [dashed] (0,0)--(0,1);
	\end{tikzpicture}
	\hspace{5mm}
	\hdots
	\\
   r_1=\frac{2\pi}{3}\hspace{30mm}
\end{tikzcd}
\caption{
Bounds for the quantity $2\cdot d_{\gh}(S^n, S^k)$ for small values of $n$ and $k$.
Here $r_n = \arccos\left(\tfrac{-1}{n+1}\right)$ and $c_{n,k}=\inf\{r\ge 0 \mid \exists\text{ an odd map }S^k \to \vr{S^n}{r}\}$.
The entries in black appear in~\cite[Figure~2]{lim2023gromov}, and the entries in blue are new.
Our Main Theorem 
recovers or improves upon all known lower bounds, and the upper bound by $\frac{2\pi}{3}$ along the superdiagonal is established in Theorem~\ref{thm:super-diag-upper-bound}.
}
\label{table:gh}
\end{table}

\begin{theorem-main}
For all $k \ge n$, the following inequalities hold:
\begin{align*}
2\cdot d_{\gh}(S^n, S^k)
&\ge \inf \left\{ \dis(f) \mid f \colon S^k \to S^n \textnormal{ is odd}\right\} \\
&\ge \inf \left\{r\ge 0 \mid \exists \textnormal{ odd }S^k \to \vr{S^n}{r}\right\} \eqqcolon c_{n,k}.
\end{align*}
\end{theorem-main}

Let us explain the two inequalities in our Main Theorem, and compare them to existing results.
The first inequality in our Main Theorem is the aforementioned ``helmet trick'' by Lim, M\'{e}moli, and Smith~\cite[Lemma~5.5]{lim2023gromov}, which states that to bound $d_{\gh}(S^n,S^k)$, it is enough to consider the distortion of odd functions.
Lim, M\'{e}moli, and Smith then prove that the distortion of any such function is bounded below by $r_n$, using Dubins and Schwarz's generalization of the Borsuk--Ulam theorem mentioned above~\cite{dubins1981equidiscontinuity} (more details will be described below).
This implies $2\cdot d_{\gh}(S^n,S^k)\geq r_n$ for all $k>n$.
Combining this with explicit constructions of upper bounds, they proved that $2\cdot d_{\gh}(S^n,S^k)=r_n$ holds exactly for $n < k\le 3$ and gave nontrivial bounds in all dimensions.
Despite these tight results, one  unsatisfactory feature of the general lower bound $2\cdot d_{\gh}(S^n,S^k)\geq r_n$ for $k>n$ is that the right-hand side does not depend on $k$, whereas it is known that for $n$ fixed and $k\to \infty$, $2\cdot d_{\gh}(S^n,S^k)\to \pi > r_n$~\cite[Proposition 1.8]{lim2023gromov}.
The present paper establishes lower bounds which improve upon these.

The second inequality in our Main Theorem, which we prove in Section~\ref{sec:main-theorem-proof}, lower bounds the distortion of an odd map $S^k\to S^n$ with $k\ge n$ in terms of the equivariant topology of Vietoris--Rips complexes of spheres.
The motivation for studying odd maps $S^k \to \vr{S^n}{r}$ comes from Adams, Bush, and Frick~\cite{ABF2}, who observe the following.
Even though we do not have a complete understanding how the homotopy types of $\vr{S^n}{r}$ change as the scale parameter $r$ increases, we can control the equivariant topology of $\vr{S^n}{r}$ in terms of packings and coverings in projective space.
In particular, if there exists a sufficiently efficient covering of $\RP^n$ by $k$ points, then there does not exist an odd map $S^k \to \vr{S^n}{r}$~\cite{ABF2}, which allows us in Section~\ref{sec:known-values-cnk} to estimate the quantity $c_{n,k}$ in terms of the covering number of $k$ points in $\RP^n$.
In this same section we furthermore determine some values of $c_{n,k}$ exactly using the current limited understanding of the homotopy types of $\vr{S^n}{r}$.
When combined together, these estimates show that the lower bound $2\cdot d_{\gh}(S^n,S^k)\ge c_{n,k}$ from our Main Theorem is never worse (and frequently improves upon) those from~\cite{lim2023gromov}; see Remark~\ref{rem:improvement} and Table~\ref{table:gh}.
In Section~\ref{sec:super-diag-upper-bound}, we supplement these new lower bounds with the following  upper bounds when $k = n +1$ (which improve upon those from~\cite{lim2023gromov}).

\begin{theorem}
\label{thm:super-diag-upper-bound}
For every $n\ge 1$, we have $2\cdot d_{\gh}(S^n, S^{n+1}) \le \frac{2\pi}{3}$.
\end{theorem}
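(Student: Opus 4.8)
The plan is to deduce the bound for general $n$ from the single case $n=1$ --- where $2\,d_{\gh}(S^1,S^2)=\tfrac{2\pi}{3}$ is already known \cite{lim2021gromov} --- by a monotonicity property of the Gromov--Hausdorff distance under spherical joins. Recall that the metric (spherical) join of round spheres is again a round sphere, $S^a\ast S^b\cong S^{a+b+1}$ isometrically, where a point of $S^a\ast S^b$ is written $(\cos t\cdot w,\ \sin t\cdot x)$ with $w\in S^a$, $x\in S^b$, $t\in[0,\tfrac\pi2]$; in particular, for $n\ge2$ we have $S^{n+1}\cong S^{n-2}\ast S^2$ and $S^n\cong S^{n-2}\ast S^1$. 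The key lemma I would establish is: \emph{for metric spaces $W,X,Y$ of diameter at most $\pi$, $d_{\gh}(W\ast X,\ W\ast Y)\le d_{\gh}(X,Y)$.} Granting this, $d_{\gh}(S^n,S^{n+1})=d_{\gh}(S^{n-2}\ast S^1,\ S^{n-2}\ast S^2)\le d_{\gh}(S^1,S^2)\le\tfrac{\pi}{3}$ for $n\ge2$, and the case $n=1$ is the cited base case; doubling finishes the proof.

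To prove the lemma I would lift an arbitrary correspondence $R\subseteq X\times Y$ to the correspondence $R'\subseteq(W\ast X)\times(W\ast Y)$ consisting of all pairs $\big((\cos t\cdot w,\sin t\cdot x),\ (\cos t\cdot w,\sin t\cdot y)\big)$ with $(x,y)\in R$, i.e.\ keeping the $W$-coordinate and the join parameter $t$ fixed while moving along $R$ in the join summand; a routine check (using the identifications in the join at $t=0$ and $t=\tfrac\pi2$) shows $R'$ is again a correspondence. For the distortion, the defining formula of the join metric gives, for two elements of $R'$,
\[
d_{W\ast X}=\arccos\!\big(\cos t\cos t'\cos d_W(w,w')+\sin t\sin t'\cos d_X(x,x')\big),
\]
together with the identical formula for $d_{W\ast Y}$, in which $w,w',t,t'$ are unchanged and only $d_X(x,x')$ is replaced by $d_Y(y,y')$. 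Hence it suffices to show that, for fixed $K=\cos t\cos t'\cos d_W(w,w')$ and $c=\sin t\sin t'\ge0$, the map $s\mapsto\arccos(K+c\cos s)$ is $1$-Lipschitz on $[0,\pi]$; then $|d_{W\ast X}-d_{W\ast Y}|\le|d_X(x,x')-d_Y(y,y')|\le\dis(R)$, so $\dis(R')\le\dis(R)$, and the lemma follows by taking infima over $R$. Differentiating, $1$-Lipschitzness is equivalent to $K^2+2Kc\cos s+c^2\le1$; substituting $K$ and $c$ and using $|\cos d_W|\le1$ and $|\cos s|\le1$ bounds the left side by $(\cos t\cos t'+\sin t\sin t')^2=\cos^2(t-t')\le1$, as required.

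The remaining ingredient is the base case $2\,d_{\gh}(S^1,S^2)\le\tfrac{2\pi}{3}$, which I would take from \cite{lim2021gromov} (or reprove via their explicit correspondence, to keep the section self-contained). It is worth noting that the join reduction cannot be carried one step further: writing $S^2\cong S^0\ast S^1$ and $S^1\cong S^0\ast S^0$ would only give $d_{\gh}(S^1,S^2)\le d_{\gh}(S^0,S^1)$, and $2\,d_{\gh}(S^0,S^1)=\pi$ (any correspondence $S^0\times S^1$ has distortion at least $\pi$, since the two points of $S^0$ are at distance $\pi$ and, by connectedness of $S^1$, the subsets of $S^1$ that they respectively cover cannot be kept disjoint and separated), so $n=1$ genuinely needs its own argument. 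Apart from this, the proof comes down to the short trigonometric inequality above plus the standard facts that $S^a\ast S^b\cong S^{a+b+1}$ isometrically and that $R'$ is a well-defined correspondence; I do not expect a substantive obstacle.
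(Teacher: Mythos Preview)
Your proof is correct and takes a genuinely different route from the paper's. The paper constructs, for each $n\ge 1$, an explicit correspondence between the closed upper hemisphere $H_\ge(S^{n+1})$ and its equator $S^n$: the band of points at distance $>\tfrac{\pi}{3}$ from the north pole is sent to the equator by nearest-point projection, while a closed $\tfrac{\pi}{3}$-ball around the north pole is partitioned into cones over the faces of an inscribed regular $(n+1)$-simplex, each cone being collapsed to the barycenter of its base. A four-case analysis then bounds the distortion by $\tfrac{2\pi}{3}$.

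Your approach instead proves the general monotonicity lemma $d_{\gh}(W\ast X,\,W\ast Y)\le d_{\gh}(X,Y)$ for the spherical join and bootstraps from the known value $2\,d_{\gh}(S^1,S^2)=\tfrac{2\pi}{3}$ via $S^n\cong S^{n-2}\ast S^1$ and $S^{n+1}\cong S^{n-2}\ast S^2$. The trigonometric computation you outline is correct; one small point worth making explicit is that when $K<0$ the bound $K^2+2Kc\cos s+c^2\le(|K|+c)^2$ (maximising the linear term at $\cos s=-1$) is what feeds into the $\cos^2(t-t')$ estimate, and that the derivative has removable singularities at the endpoints where $c\sin s=0$, so the $1$-Lipschitz claim extends to all of $[0,\pi]$.

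What each approach buys: your lemma is cleaner and of independent interest --- it immediately gives, for instance, $d_{\gh}(S^n,S^{n+m})\le d_{\gh}(S^{n'},S^{n'+m})$ whenever $n\ge n'$, addressing one of the monotonicity questions raised in the paper's final section. The price is that your argument is not self-contained: it imports the $n=1$ upper bound from \cite{lim2021gromov}. The paper's construction, by contrast, handles all $n\ge 1$ uniformly (including $n=1$) without appeal to prior work, and the explicit correspondence may be useful for attempting to tighten the bound toward the conjectured value $r_n$.
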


The second inequality of the Main Theorem is of independent interest due to its relationship with the following natural question: the Borsuk--Ulam theorem asserts that there exists no continuous odd map from $S^k \to S^n$ for $k > n$, so given an odd function from $S^k \to S^n$, how discontinuous must it be?
In~\cite{dubins1981equidiscontinuity}, Dubins and Schwarz quantify the discontinuity of odd functions $S^{n+1} \to S^n$ by showing that the modulus of discontinuity of any odd function $S^{n+1}\to S^n$ is at least $r_n$.
Moreover, they exhibit a function which realizes this bound.
In Section~\ref{sec:gen-ds} we generalize the Dubins--Schwarz inequality, by adapting the proof of the second inequality in the Main Theorem to use the modulus of discontinuity instead of the distortion.

\begin{theorem}[Generalized Dubins--Schwarz inequality]
\label{thm:odd-modulus-discontinuity-bound}
Any odd function $f\colon S^k \to S^n$ with $k \ge n$ has modulus of discontinuity at least $c_{n,k}$, and this bound is tight.
In particular, for every $\varepsilon > 0$, there exists an odd function $S^k \to S^n$ with modulus of discontinuity $c_{n,k} + \varepsilon$.
\end{theorem}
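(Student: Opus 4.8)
The plan is to prove the two assertions separately. For the lower bound I would rerun the proof of the second inequality in the Main Theorem, feeding the Vietoris--Rips construction the modulus of discontinuity of $f$ in place of its distortion; the single new ingredient needed is the Lebesgue number lemma, which converts pointwise control of the oscillation of $f$ into uniform control at one fixed small scale. For tightness I would take an odd continuous map into a Vietoris--Rips complex of $S^n$ at a scale just above $c_{n,k}$ --- which exists by the very definition of $c_{n,k}$ --- and post-compose with a discontinuous, but carefully chosen, ``dominant vertex'' map back onto $S^n$.

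\medskip\noindent\textbf{The lower bound.} Let $f\colon S^k\to S^n$ be odd with modulus of discontinuity $m$, and fix an arbitrary $m'>m$. By definition of the modulus of discontinuity, every $x\in S^k$ has an open neighborhood $U_x$ with $\operatorname{diam} f(U_x)<m'$. Since $S^k$ is compact, the cover $\{U_x\}_{x\in S^k}$ has a Lebesgue number $\delta>0$. Pick $r_0\in(0,\delta)$ small enough that, as in the proof of the Main Theorem, there is an odd map $S^k\to|\vr{S^k}{r_0}|$ (for $r_0$ small the space $|\vr{S^k}{r_0}|\simeq S^k$ by Hausmann~\cite{Hausmann1995} is a $(k-1)$-connected free $\Z/2$-CW-complex, so equivariant obstruction theory applies). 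Every simplex of $\vr{S^k}{r_0}$ has diameter at most $r_0<\delta$, hence lies in some $U_x$, so $f$ sends its vertex set into a set of diameter less than $m'$; therefore $f$ induces a simplicial map $\vr{S^k}{r_0}\to\vr{S^n}{m'}$, whose geometric realization is a continuous odd map $|\vr{S^k}{r_0}|\to|\vr{S^n}{m'}|$. Composing with the odd map $S^k\to|\vr{S^k}{r_0}|$ gives an odd map $S^k\to\vr{S^n}{m'}$, so $m'\ge c_{n,k}$ by the definition of $c_{n,k}$. Since $m'>m$ was arbitrary, $m\ge c_{n,k}$.

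\medskip\noindent\textbf{Tightness.} Fix $\varepsilon>0$ and choose $r$ with $c_{n,k}<r<\min\{c_{n,k}+\varepsilon,\pi\}$; this is possible because $c_{n,k}<\pi$ (e.g.\ by the known upper bounds for $d_{\gh}(S^n,S^k)$), and by the definition of $c_{n,k}$ there is an odd continuous map $g\colon S^k\to|\vr{S^n}{r}|$. I would then define a map $s\colon|\vr{S^n}{r}|\to S^n$ as follows: a point $p$ lies in the relative interior of a unique simplex $\{v_0,\dots,v_j\}$ of $\vr{S^n}{r}$, with positive barycentric coordinates $\lambda_0,\dots,\lambda_j$; set $s(p)$ to be a vertex $v_i$ at which $\lambda_i$ is maximal, breaking ties by a fixed rule $\beta$. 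Since no simplex of $\vr{S^n}{r}$ is fixed setwise by the antipodal action (as $r<\pi$ forbids an antipodal pair inside a simplex), one may choose $\beta$ on the finite subsets of simplices of $\vr{S^n}{r}$ so that $\beta(-T)=-\beta(T)$, and then $s$ is odd. If the maximum among $\lambda_0,\dots,\lambda_j$ is attained at a single vertex, the same holds on a whole neighborhood of $p$ and $s$ is continuous at $p$; in any case, every point sufficiently close to $p$ is sent by $s$ into the set of vertices of $\{v_0,\dots,v_j\}$ at which the maximum is attained, so the oscillation of $s$ at $p$ is at most $\operatorname{diam}\{v_0,\dots,v_j\}\le r$. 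Hence $s$ has modulus of discontinuity at most $r$, and since $g$ is continuous the odd function $h\coloneqq s\circ g\colon S^k\to S^n$ has modulus of discontinuity at most $r<c_{n,k}+\varepsilon$. Combined with the lower bound, this shows that the infimum of the modulus of discontinuity over odd functions $S^k\to S^n$ equals $c_{n,k}$ (and, when $c_{n,k}+\varepsilon\le\pi$, the exact value $c_{n,k}+\varepsilon$ is attained after a routine modification of $h$ that inserts one antipodally symmetric jump of the right size at a point where $h$ is continuous).

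\medskip\noindent\textbf{Main obstacle.} The lower bound is essentially the Main Theorem argument with the Lebesgue number lemma inserted, so the substantive work is the tightness construction: one must produce a map $|\vr{S^n}{r}|\to S^n$ whose modulus of discontinuity is bounded by the scale $r$, and the obvious candidates (the normalized barycenter of the carrier simplex, or selecting a vertex by some global rule) are either ill-defined, not odd, or can jump by more than $r$. Verifying that the dominant-vertex rule above simultaneously (i) is odd --- using that $r<\pi$ rules out self-antipodal simplices, so the tie-breaking can be chosen equivariantly --- and (ii) has oscillation at every point bounded by the diameter of a single simplex of $\vr{S^n}{r}$, is the heart of the argument; pinning the modulus of discontinuity to exactly $c_{n,k}+\varepsilon$, rather than merely at most $c_{n,k}+\varepsilon$, is a minor additional bookkeeping step.
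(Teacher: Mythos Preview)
Your proposal is correct and follows essentially the same approach as the paper: for the lower bound, the paper's Lemma~\ref{lem:ind-map-modulus-discontinuity} is exactly your Lebesgue-number step packaged as a standalone lemma (compactness turning pointwise oscillation control into a uniform scale $\alpha_\varepsilon$), after which one composes with the partition-of-unity map $S^k\to\vr{X}{\alpha_\varepsilon}$ just as in the Main Theorem; for tightness, the paper's Lemma~\ref{lem:VR-to-mod} and Theorem~\ref{thm:odd-modulus-discontinuity-bound-tight} construct precisely your ``dominant vertex'' map $s$, with the odd tie-breaker $v$ playing the role of your $\beta$, and they verify the oscillation bound by passing to the barycentric subdivision and using that the open star of the carrier simplex is open---a slightly cleaner bookkeeping device than your direct argument, but the same content.
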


In summary, this paper explores and combines emerging relationships between the Gromov--Hausdorff distance, Borsuk--Ulam theorems, and Vietoris--Rips simplicial complexes.
While it was previously known that these topics were pairwise related (see Figure~\ref{fig:triangle-graphic} and Section~\ref{sec:related}), our Main Theorem exhibits an explicit mutual connection between these concepts.

Researchers from different research communities, such as applied topology, topological combinatorics, geometric group theory, metric geometry, and quantitative topology, have different perspectives and levels of expertise on the Gromov--Hausdorff distance, Borsuk--Ulam theorems, and Vietoris--Rips simplicial complexes.
There may be few experts on all three topics.
As such, we include a thorough survey of these topics and the existing relationships among them in Sections~\ref{sec:related} and~\ref{sec:background},  in hopes that this paper will serve as an efficient way to teach these topics to a variety of research communities.
Additionally, we have collected a large number of remaining open questions in Section~\ref{sec:conclusion}, many of which we hope will yield to multi-pronged attacks, after bridges have been formed between these different communities.

\section{Related work}
\label{sec:related}

\begin{figure}[htb]
\centering
\includegraphics[width=\textwidth]{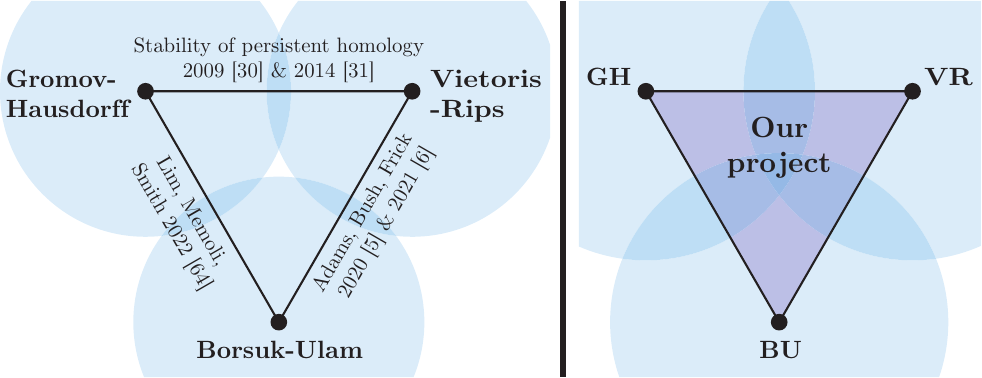}
\caption{Our project fills a hole in the mathematical landscape.
See the open questions in Section~\ref{sec:conclusion} for areas where more work is needed.}
\label{fig:triangle-graphic}
\end{figure}

We organize our description of related work using Figure~\ref{fig:triangle-graphic}.

\subsection*{The Gromov--Hausdorff (GH) distance}

The Gromov--Hausdorff distance provides a metric on isometry classes of compact metric spaces~\cite{edwards1975structure,gromov1981groups,gromov1981structures,tuzhilin2016invented}.
Despite its importance in geometry~\cite{BuragoBuragoIvanov,cheeger1997structure,colding1996large,petersen2006riemannian} and shape comparison~\cite{ms04,ms05,memoli2007use}, exact Gromov--Hausdorff distances are only known in a small number of cases.
These include the Gromov--Hausdorff distance between a line segment and a Euclidean circle~\cite{ji2021gromov}, between spheres of dimension at most three~\cite{lim2023gromov}, and between some pairs of discrete metric spaces such as simplices~\cite{memoli2012some,ivanov2019gromov} and between the vertex sets of regular polygons~\cite{lim2023gromov,talipov2023gromov}.

\subsection*{Vietoris--Rips (VR) complexes}

Vietoris--Rips simplicial complexes were first considered by Vietoris in the context of developing a cohomology theory for metric spaces~\cite{lefschetz1942algebraic,Vietoris27}, and introduced independently by Rips in geometric group theory as a natural way to thicken (i.e.\ coarsen) a space~\cite{bridson2011metric,Gromov}.
More recently, they have become commonly used tools in applied and computational topology~\cite{edelsbrunner2000topological,EdelsbrunnerHarer}, used in applications to
data analysis~\cite{Carlsson2009,CarlssonIshkhanovDeSilvaZomorodian2008,ghrist2008barcodes} and sensor networks~\cite{Coordinate-free,de2007coverage}, for example.

\subsection*{Borsuk--Ulam (BU) theorems}

The Borsuk--Ulam theorem is a classic result from topology, stating that any continuous map from the $n$-sphere to $n$-dimensional Euclidean space identifies antipodal points, or equivalently that there is no continuous odd map from the $k$-sphere to the $n$-sphere for $k>n$.
It has numerous applications to discrete geometry and combinatorics~\cite{matousek2003using}, many of which are still being discovered and explored, such as applications to low-distortion embeddings of finite metric spaces into Euclidean space~\cite{sidiropoulos2019} (here the Borsuk--Ulam theorem is used in a different fashion to our approach of bounding distortion), inscribing parallelograms into spatial curves~\cite{aslam2020}, and hardness results for graph colorings~\cite{austrin-bhangale-potukuchi2020}.
Various recent applications of equivariant topology go beyond the antipodal symmetry of the Borsuk--Ulam theorem; see~\cite{blagojevic2017}.

\subsection*{VR--GH}

A well-known connection between Vietoris--Rips complexes and Gromov--Hausdorff distances is the stability of persistent homology:
If $X$ and $Y$ are totally bounded metric spaces, then twice the Gromov--Hausdorff distance between $X$ and $Y$ is bounded from below by the bottleneck distance between the Vietoris--Rips persistent homology barcodes of $X$ and $Y$~\cite{ChazalDeSilvaOudot2014,chazal2009gromov,cohen2007stability}.
However, stability alone does not provide sharp lower bounds on the Gromov--Hausdorff distances between spheres of different dimensions. In fact, those lower bounds have been computed exactly in~\cite[Corollary 9.3]{lim2020vietoris} where it is proved that they equal $\frac{1}{2}$ of the filling radius~\cite{gromov1983filling,katz1983filling} of the sphere with smaller dimension.
In the cases $(n,k)\in\{(1,2),(1,3),(2,3)\}$, these bounds  yield exactly one-half of the actual corresponding values  of the Gromov-Hausdorff distances~\cite{lim2023gromov}.
We show how to inject ideas from equivariant topology into the VR--GH story so as to obtain sharper bounds.

\subsection*{GH--BU}

The paper~\cite{lim2023gromov} computes $d_{\gh}(S^n,S^k)$ exactly for $n < k\le 3$ and gives nontrivial upper and lower bounds for all $n < k$.
Some of their lower bounds are related to generalizations of the Borsuk--Ulam theorem, such as~\cite{dubins1981equidiscontinuity}, and these lower bounds strictly improve upon the lower bounds provided by the stability of persistent homology.

\subsection*{BU--VR}

The papers~\cite{ABF,ABF2} use information about the homotopy connectivity of Vietoris--Rips complexes defined on spheres at various scales to prove generalizations of the Borsuk--Ulam theorem for maps from spheres into higher-dimensional Euclidean spaces.
Cohomological techniques, without knowledge of the connectivity of Vietoris--Rips complexes, are used in~\cite{crabb2023borsuk} to obtain similar, and sometimes stronger, results.

\subsection*{GH--BU--VR}

The Gromov--Hausdorff distance, Borsuk--Ulam theorems, and Vietoris--Rips complexes are not only related pairwise.
Indeed, we think of our paper as a witness point showing that there is a nontrivial triple intersection between these topics.
For example, our Main Theorem lower bounds the Gromov--Hausdorff distance $d_{\gh}(S^n,S^k)$ for $k\ge n$ in terms of odd maps from from $S^k$ into the Vietoris--Rips complex $\vr{S^n}{r}$, and we obstruct the existence of such odd maps using the Borsuk--Ulam theorem.

\section{Background and notation}
\label{sec:background}

For topological spaces $X$ and $Y$:
\begin{itemize}
\item
A \emph{map} $f \colon X \to Y$ is a continuous function.
\item
A \emph{function} $f \colon X \to Y$ is any function, possibly discontinuous.
\end{itemize}

For a metric space $X$:
\begin{itemize}
\item
We denote by $d_{X} \colon X \times X \to \mathbb{R}$ the metric on $X$.
\item
We let $B(x;r)\coloneqq \{x'\in X~|~d_X(x',x)<r\}$ denote the open ball of radius $r$ about $x$.
For $X'\subseteq X$, we let $B(X';r)=\cup_{x\in X'}B(x;r)$ be the union of the balls.
\item
The \emph{diameter} of a subset $A \subseteq X$ is $\diam(A) \coloneqq \sup_{a, a' \in A} d_X(a, a')$.
\end{itemize}

We define the \emph{$n$-sphere} as $S^{n} \coloneqq \{x \in \R^{n + 1} \mid \|x\| = 1\}$.
We always equip $S^n$ with the \emph{geodesic metric} in which great circles have length $2\pi$ (with the exception on Section~\ref{ssec:Euclidean}, when we also consider the Euclidean metric).
For $x,x'\in S^n\subset \R^{n+1}$, the geodesic metric satisfies the equality
\[d_{S^n}(x,x')=\arccos\left(\langle x,x'\rangle\right)=2\arcsin\left(\tfrac{\|x-x'\|}{2}\right).\]
With this convention we have $\diam(S^0)=\pi$, even though $S^0$ is not a geodesic space.

\subsection{Background on the Gromov--Hausdorff distance}
\label{ssec:background-gh}

\subsection*{Distortion}

Given any two bounded metric spaces $(X,d_X)$ and $(Y,d_Y)$ and any non-empty relation $R\subseteq X\times Y$, the \emph{distortion of $R$} is defined as 
\[\text{dis}(R)\coloneqq \sup_{(x,y),(x',y')\in R}\left|d_X(x,x')-d_Y(y,y')\right|.\]
In particular, the graph of any function $g\colon X\to Y$ is a relation $R_g\subseteq X\times Y$, and we denote the distortion of this relation by $\dis(g)\coloneqq \dis(R_g)$.
In this case, 
\[\dis(g)=\sup_{x,x'\in X}|d_{X}(x,x')-d_{Y}(g(x),g(x'))|.\]
A relation is a \emph{correspondence} if its projections onto $X$ and onto $Y$ are surjective.
Note that the relation $R_g$ is a correspondence if and only if $g$ is surjective.

Given functions $g \colon X \to Y$ and $h \colon Y \to X$ between metric spaces, the \emph{codistortion} (see Figure~\ref{fig:codistortion}) of $g$ and $h$ is defined as
\[\codis(g,h)\coloneqq\sup_{x\in X,y\in Y}|d_{X}(x,h(y))-d_{Y}(g(x),y)|.\]
The codistortion $\codis(g,h)$ allows one to bound the extent to which the functions $g$ and $h$ fail to be inverses of each other.
Indeed, if $\codis(g,h)<\varepsilon$, then one has $d_X(x,h(g(x)))<\varepsilon$ and $d_Y(g(h(y)),y)<\varepsilon$ for every $x\in X$ and $y\in Y$.

\begin{figure}[h]
\centering
\begin{tikzcd}
    \begin{tikzpicture}[scale=0.8]
    \filldraw[fill=none](0,0) circle (1.5);
    \draw [dashed](-1.5,0) to[out=90,in=90,looseness=.5] (1.5,0);
    \draw (1.5,0) to [out=270,in=270,looseness=.5] (-1.5,0);
    \node[fill=blue,circle,inner sep=.1em] (x) at (1.2,0.891) {};
    \node[right,blue] at (x)  {$x$};
    \node[fill=purple,circle,inner sep=.1em] (y) at (1.2,-0.891) {};
    \node[right,purple] at (y)  {$h(y)$};
	\end{tikzpicture}
	\hspace{-4em}
    & 
	X \ar[rr,bend left,"g"] && 
	Y \ar[ll,bend left,"h"]
	&
	\hspace{-2.5em}
	\begin{tikzpicture}[scale=0.8]
    \filldraw[fill=none](0,0) circle (1.5);
    \node[fill=blue,circle,inner sep=.1em] (x1) at (0.95,1.15) {};
    \node[right,blue] at (x1)  {$g(x)$};
    \node[fill=purple,circle,inner sep=.1em] (y1) at (1,-1.1) {};
    \node[right,purple] at (y1)  {$y$};
	\end{tikzpicture}
	\end{tikzcd}
\caption{Illustration of the codistortion.}
\label{fig:codistortion}
\end{figure}
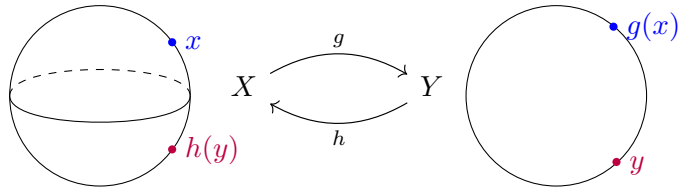

\subsection*{Hausdorff distance}

Let $Z$ be a metric space.
If $X$ and $Y$ are two closed submetric spaces of $Z$ then the \emph{Hausdorff distance} between $X$ and $Y$ is
\[d_\mathrm{H}(X,Y)=\inf\{r\geq 0\mid X\subseteq B(Y;r)\;\mbox{and}\; Y\subseteq B(X;r)\}.\]
In other words, the Hausdorff distance calculates the smallest real number $r$ such that if we thicken $Y$ by $r$ it contains $X$ and if we thicken $X$ by $r$ it contains $Y$.

\begin{figure}[ht]
    \centering
    \includegraphics[width=.85\linewidth]{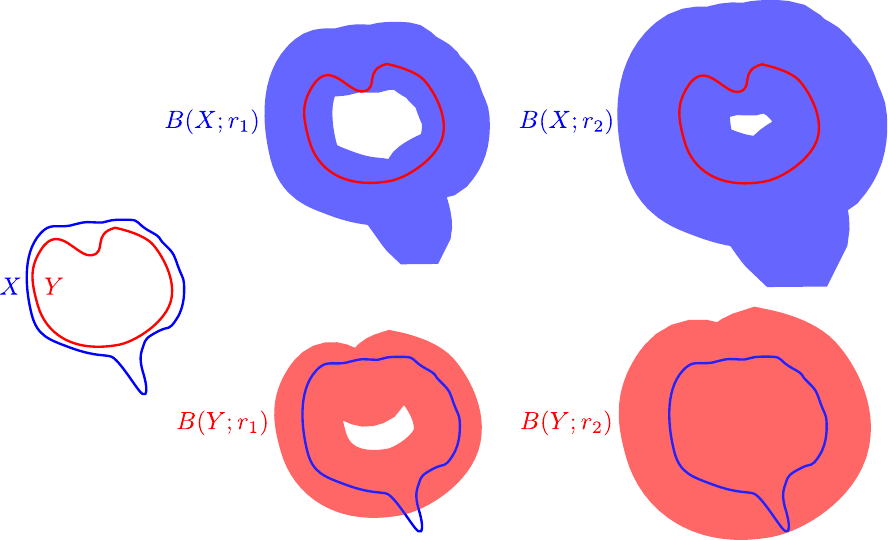}
    \caption{Let the metric spaces $X$ (blue) and $Y$ (red) inherit the Euclidean metric from the plane.
    If we thicken $X$ by $r_1$, then $Y\subseteq B(X;r_1)$, but $X\not\subseteq B(Y;r_1)$, so $d_\mathrm{H}(X,Y)\ge r_1$.
    When thickening each space by  $r_2$, we see $Y\subseteq B(X;r_2)$ and $X\subseteq B(Y;r_2)$, so $d_\mathrm{H}(X,Y)\le r_2$.}
    \label{fig:hausdorff}
\end{figure}

\subsection*{Gromov--Hausdorff distance}

The Gromov--Hausdorff distance $d_{\gh}(X,Y)$ between two bounded metric spaces $X$ and $Y$ is defined as the infimum, over all metric spaces $Z$ and isometric embeddings $\gamma\colon X\to Z$ and $\varphi\colon Y\to Z$, of the Hausdorff distance between $\gamma(X)$ and $\varphi(Y)$~\cite{edwards1975structure,gromov2007metric}.
Unlike the Hausdorff distance, the Gromov-Hausdorff distance considers sets $X$ and $Y$ that are not part of the same metric space.
However, to compute it we need to embed $X$ and $Y$ in different common metric spaces $Z$, and then take the infimum of the Hausdorff distance over those $Z$.
It follows from~\cite{kalton1999distances} that the Gromov--Hausdorff distance between any two bounded metric spaces $X$ and $Y$ can alternatively be defined as
\[2\cdot d_{\gh}(X,Y) = \inf_{R}\dis(R),\]
where $R$ ranges over all correspondences between $X$ and $Y$.
It was also observed in~\cite{kalton1999distances} that
\begin{equation}
\label{eq:dgh}
2\cdot d_{\gh}(X,Y) = \inf_{g,h}\max\{\dis(g),\dis(h),\codis(g,h)\},\end{equation}
where $g\colon X \to Y$ and $h\colon Y \to X$ are any functions.
It follows that $2\cdot d_{\gh}(X,Y)$ is at least as large as the infimum, over all functions $g\colon X\to Y$, of the distortion of $g$.
Interestingly, our best known lower bounds on the Gromov--Hausdorff distance between spheres only rely on lower bounding the distortion (not the codistortion).

\subsection{Background on Borsuk--Ulam theorems}
\label{ssec:background-bu}

The Borsuk--Ulam theorem is a result from algebraic topology with wide-ranging applications:

\begin{theorem}[Borsuk~\cite{borsuk1933drei}]
\label{thm:borsuk-ulam}
For any map $f \colon S^{n} \to \R^{n}$, there exists $x \in S^{n}$ with $f(x) = f(-x)$.
\end{theorem}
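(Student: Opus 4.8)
The plan is to reduce the statement to the non-existence of a continuous odd map $S^n \to S^{n-1}$, and then to prove that non-existence by standard algebraic topology. First I would argue by contradiction: if $f(x) \neq f(-x)$ for every $x \in S^n$, then
\[
g(x) \coloneqq \frac{f(x) - f(-x)}{\|f(x) - f(-x)\|}
\]
defines a continuous map $g \colon S^n \to S^{n-1}$, and because the numerator reverses sign under $x \mapsto -x$, we get $g(-x) = -g(x)$; that is, $g$ is odd. So it suffices to show that no continuous odd map $S^n \to S^{n-1}$ exists.

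For $n = 1$ this is immediate: an odd map $S^1 \to S^0 = \{-1,+1\}$ is a continuous map from a connected space to a two-point discrete space, hence constant, contradicting oddness. For $n \geq 2$ I would pass to real projective space. An odd map $g$ is $\Z/2$-equivariant for the antipodal actions, so it descends to a continuous map $\bar g \colon \RP^n \to \RP^{n-1}$; in particular the connected double cover $S^n \to \RP^n$ is the pullback along $\bar g$ of the connected double cover $S^{n-1} \to \RP^{n-1}$, so $\bar g^* \colon H^1(\RP^{n-1};\Z/2) \to H^1(\RP^n;\Z/2)$ sends the generator to the generator. Writing $H^*(\RP^m;\Z/2) \cong \Z/2[\alpha_m]/(\alpha_m^{m+1})$ with $\deg \alpha_m = 1$, naturality of cup products gives $\bar g^*(\alpha_{n-1}^n) = \alpha_n^n$. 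But $\alpha_{n-1}^n = 0$ in $H^n(\RP^{n-1};\Z/2)$ since the top surviving power there is $\alpha_{n-1}^{n-1}$, while $\alpha_n^n \neq 0$ in $H^n(\RP^n;\Z/2)$ --- a contradiction. (Alternatively: restrict $g$ to the equatorial $S^{n-1} \subset S^n$, obtaining an odd self-map of $S^{n-1}$ which extends over a hemispherical disk and is therefore nullhomotopic of degree $0$, and then invoke the lemma that every odd self-map of a sphere has odd degree.)

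The normalization in the first paragraph is routine; the genuine content, and the step I expect to be the main obstacle, is the non-existence of an odd map $S^n \to S^{n-1}$. Every proof of it ultimately rests on the mod-$2$ topology of projective space (equivalently, degree mod $2$, Stiefel--Whitney classes, or a combinatorial Tucker-type lemma), and within the argument above the one point that must be checked carefully rather than quoted is that $\bar g^*$ is nontrivial on $H^1(-;\Z/2)$ --- equivalently, that the double cover of $\RP^n$ pulled back from $S^{n-1} \to \RP^{n-1}$ is connected, which follows from connectedness of $S^n$ together with uniqueness of the connected double cover of $\RP^n$. Once this is in hand, the dimension inequality $n > n-1$ produces the contradiction at once.
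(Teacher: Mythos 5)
Your argument is correct. Note that the paper does not actually prove this statement: it cites Borsuk and defers to the surveys \cite{steinlein1993spheres, matousek2003using}, remarking only that standard proofs use degree or Lefschetz-number arguments. Your proposal supplies exactly what the paper omits: the normalization $g(x) = (f(x)-f(-x))/\|f(x)-f(-x)\|$ is the standard reduction to the odd-map formulation (Theorem~\ref{thm:borsuk-ulam-odd}, whose equivalence the paper leaves as an exercise), and the mod-$2$ cohomology argument on $\RP^n$ is sound --- in particular you correctly identify, and correctly justify, the one delicate point, namely that $\bar g^*$ is nonzero on $H^1(-;\Z/2)$ because the pullback of the connected double cover $S^{n-1}\to\RP^{n-1}$ along $\bar g$ is the connected cover $S^n\to\RP^n$. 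The parenthetical alternative via odd degree of odd self-maps is also a valid standard route.
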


We give two equivalent formulations; we leave the equivalence as a simple exercise:

\begin{theorem}
\label{thm:borsuk-ulam-alternate}
Any odd map $g \colon S^{n} \to \R^{n}$ has a zero.
\end{theorem}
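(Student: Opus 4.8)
The statement to prove is Theorem~\ref{thm:borsuk-ulam-alternate}: any odd map $g\colon S^n\to\R^n$ has a zero, and moreover it is equivalent to the classical Borsuk--Ulam theorem (Theorem~\ref{thm:borsuk-ulam}).

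The plan is to establish both directions of the equivalence between Theorem~\ref{thm:borsuk-ulam} and Theorem~\ref{thm:borsuk-ulam-alternate}, since together they constitute ``the equivalence'' the excerpt asks the reader to verify. First I would derive Theorem~\ref{thm:borsuk-ulam-alternate} from Theorem~\ref{thm:borsuk-ulam}: given an odd map $g\colon S^n\to\R^n$, Theorem~\ref{thm:borsuk-ulam} produces a point $x$ with $g(x)=g(-x)$; but oddness gives $g(-x)=-g(x)$, so $g(x)=-g(x)$, hence $g(x)=0$. For the converse, given an arbitrary map $f\colon S^n\to\R^n$, I would form the map $g\colon S^n\to\R^n$ defined by $g(x)\coloneqq f(x)-f(-x)$. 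This $g$ is continuous and odd since $g(-x)=f(-x)-f(x)=-g(x)$, so Theorem~\ref{thm:borsuk-ulam-alternate} furnishes a point $x$ with $g(x)=0$, i.e.\ $f(x)=f(-x)$, which is exactly the conclusion of Theorem~\ref{thm:borsuk-ulam}.

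These two reductions are short and purely formal; the genuine mathematical content is the Borsuk--Ulam theorem itself, which I would treat as the known classical input (it is attributed to Borsuk~\cite{borsuk1933drei} in the statement and is standard, e.g.\ via the degree-theoretic or cohomological argument that no odd map $S^n\to S^{n-1}$ exists, equivalently that an odd self-map of $S^{n-1}$ has odd degree). So the ``proof'' here really is just the exercise of unwinding definitions, and I expect no obstacle beyond being careful that the auxiliary map $g(x)=f(x)-f(-x)$ is both continuous (clear, as a difference of compositions of continuous maps with the continuous antipodal map) and odd (the one-line check above). If one instead wanted a self-contained account, the main obstacle would be proving Borsuk--Ulam from scratch — but that is outside the scope of what the excerpt requires, which only asks for the equivalence of the three formulations.

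In short, the write-up will: (i) note that Theorem~\ref{thm:borsuk-ulam-alternate} follows immediately from Theorem~\ref{thm:borsuk-ulam} by applying oddness at the coincidence point; (ii) note conversely that applying Theorem~\ref{thm:borsuk-ulam-alternate} to $x\mapsto f(x)-f(-x)$ recovers Theorem~\ref{thm:borsuk-ulam}; and (iii) remark that the equivalence with the ``no odd map $S^k\to S^n$ for $k>n$'' phrasing follows similarly (restrict to $S^n\subset S^k$, or compose with a coordinate projection $S^n\hookrightarrow\R^{n+1}\setminus\{0\}\to S^n$ after deleting a value), so that all stated forms of the Borsuk--Ulam theorem coincide.
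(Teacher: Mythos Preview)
Your proposal is correct and is exactly the kind of argument the paper has in mind: the paper does not give a proof at all, but explicitly ``leave[s] the equivalence as a simple exercise,'' and your two short reductions (apply oddness at the coincidence point for one direction; apply the alternate form to $x\mapsto f(x)-f(-x)$ for the other) are the standard way to do that exercise.
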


\begin{theorem}
\label{thm:borsuk-ulam-odd}
There does not exist an odd map $h \colon S^{n} \to S^{n - 1}$.
\end{theorem}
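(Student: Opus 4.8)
The plan is to deduce Theorem~\ref{thm:borsuk-ulam-odd} from the already-stated Theorem~\ref{thm:borsuk-ulam-alternate} (equivalently Theorem~\ref{thm:borsuk-ulam}) --- this is the ``simple exercise'' alluded to just above --- and, for readers who prefer a self-contained argument, to record the standard mod-$2$ cohomology proof as well. For the short reduction I would argue by contradiction: suppose an odd map $h\colon S^n\to S^{n-1}$ exists, regard $S^{n-1}$ as the unit sphere inside $\R^n$, and let $\iota\colon S^{n-1}\hookrightarrow\R^n$ denote the inclusion, which is itself odd. Then $g\coloneqq\iota\circ h\colon S^n\to\R^n$ is a continuous odd map with $\|g(x)\|=\|h(x)\|=1$ for every $x\in S^n$, so $g$ is nowhere zero, contradicting Theorem~\ref{thm:borsuk-ulam-alternate}. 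For the reverse directions of the promised equivalence I would normalize a hypothetical zero-free odd $g\colon S^n\to\R^n$ to $x\mapsto g(x)/\|g(x)\|$, and pass between Theorems~\ref{thm:borsuk-ulam} and~\ref{thm:borsuk-ulam-alternate} via the substitution $g(x)\coloneqq f(x)-f(-x)$.

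If one wishes to avoid invoking Borsuk's theorem entirely, I would instead argue with mod-$2$ cohomology. An odd map $h\colon S^n\to S^{n-1}$ is $\Z/2$-equivariant for the antipodal actions, hence descends to a map $\bar h\colon\RP^n\to\RP^{n-1}$ on quotients, and the double cover $S^n\to\RP^n$ is identified with the pullback along $\bar h$ of the double cover $S^{n-1}\to\RP^{n-1}$. Consequently $\bar h^*$ carries the generator of $H^1(\RP^{n-1};\Z/2)$ to the generator of $H^1(\RP^n;\Z/2)$. Writing $H^*(\RP^m;\Z/2)=\Z/2[a_m]/(a_m^{m+1})$ with $|a_m|=1$, naturality of cup products then forces $0 = \bar h^*(a_{n-1}^{\,n}) = (\bar h^* a_{n-1})^n = a_n^{\,n}\neq 0$ in $H^n(\RP^n;\Z/2)$, a contradiction; the degenerate case $n=1$ (no odd map $S^1\to S^0$) follows from connectedness of $S^1$.

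The reduction itself is routine, and all the real content sits in the Borsuk--Ulam theorem we are assuming. In the self-contained route the one step that needs care is the claim that $\bar h^*$ is an isomorphism on $H^1(-;\Z/2)$ --- equivalently, that the pulled-back double cover is the connected one, $S^n\to\RP^n$. This is exactly where the existence of the \emph{equivariant lift} $h$, and not merely of $\bar h$, is used: given $x\in\RP^n$ with fiber $\{y,-y\}$ in $S^n$, the induced map to the pullback sends $y\mapsto(x,h(y))$ and $-y\mapsto(x,h(-y))=(x,-h(y))$, and since $h(y)\in S^{n-1}$ is a unit vector these two points are distinct, so the lift is a fiberwise bijection and hence a homeomorphism of covers. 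Everything after that point is a formal computation in $H^*(\RP^m;\Z/2)$.
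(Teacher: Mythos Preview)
Your reduction from Theorem~\ref{thm:borsuk-ulam-alternate} is exactly the ``simple exercise'' the paper alludes to and declines to spell out; the paper gives no proof of its own here, instead pointing to \cite{steinlein1993spheres, matousek2003using} for proofs of Borsuk--Ulam proper. Your argument is correct, and your additional self-contained mod-$2$ cohomology proof goes well beyond what the paper provides --- the paper merely remarks that for $n\ge 2$ one typically uses algebraic-topological machinery such as degree or Lefschetz number, without carrying out any such argument.
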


For $n = 0, 1$, these statements either are trivial or are simple consequences of the intermediate value theorem.
For larger $n$, proofs typically use machinery from algebraic topology (for example, the \emph{degree} or the \emph{Lefschetz number} of a map), though more elementary proofs are also available.
For outlines of several styles of proofs of the Borsuk--Ulam theorem, see~\cite{steinlein1993spheres,matousek2003using}.

The Borsuk--Ulam theorem is foundational to the field of topological combinatorics, as exemplified by Lov\'{s}sz's 1978 proof~\cite{Lovasz1978} of Kneser's conjecture about the chromatic number of Kneser graphs.
The Borsuk--Ulam theorem finds applications across various mathematical disciplines, for example in functional analysis (e.g., to prove the Hobby--Rice theorem~\cite{hobby1965moment}), in differential equations (e.g., to prove that there are infinitely many solutions for a system of nonlinear elliptic partial differential equations~\cite{michalek1989zp}), and in mathematical economics (e.g., to prove the existence of equilibrium with incomplete markets~\cite{husseini1990existence}).

We now introduce some basic notions from equivariant topology.
All of the below is specialized to $\Z/2$, the cyclic group of order two, but also evidently generalizes to other groups.

\begin{itemize}
\item
A \emph{$\Z/2$ space} is a topological space $X$ equipped with an \emph{involution map}, denoted by $x \mapsto -x$, such that $-(-x) = x$ for all $x \in X$.
We say a $\Z/2$ space $X$ is \emph{free} if $-x \ne x$ for all $x \in X$.
\item Given a subset $X'\subseteq X$ of a $\Z/2$ space, we define $-X'\coloneqq\{-x~|~x\in X'\}$.
Furthermore, we say $X'$ is \emph{centrally-symmetric} (or \emph{$\Z/2$ invariant}) if $X'=-X'$.
\item
If $X$ and $Y$ are $\Z/2$ spaces, then a function $f \colon X \to Y$ is \emph{$\Z/2$ equivariant} (or \emph{odd}) if $f(-x) = -f(x)$ for all $x \in X$.
(Similarly, we may describe a map as being \emph{odd}.)
\item
If $X$ is a $\Z/2$ space, then the identity map on $X$ is an odd map.
\item
If $X, Y, Z$ are $\Z/2$ spaces, and $f \colon Y \to Z$, $g \colon X \to Y$ are odd, then $f \circ g$ is odd.
\item
The sphere $S^{n}$ is a $\Z/2$ space, since it inherits the involution map of $\R^{n + 1}$.
\end{itemize}

We now give one representative application of the Borsuk--Ulam theorem, a topological generalization of Radon's theorem on convex sets~\cite{radon1921mengen}:

\begin{theorem}[Bajm\'{o}czy, B\'{a}r\'{a}ny~\cite{bajmoczy1979common}]
\label{thm:bajmoczy-barany}
Let $\Delta^{n + 1}$ be the $(n + 1)$-dimensional simplex in $\R^{n + 2}$.
Then for any map $f \colon \Delta^{n + 1} \to \R^{n}$, there exist $x, y \in \Delta^{n + 1}$ on disjoint faces with $f(x) = f(y)$.
\end{theorem}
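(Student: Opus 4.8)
The plan is to deduce the statement from the Borsuk--Ulam theorem (in its odd-map form, Theorem~\ref{thm:borsuk-ulam-odd}) by constructing an auxiliary odd map out of a sphere, exactly as in the classical reduction of the affine Radon theorem to the separation of a point configuration. First I would set up the combinatorial model of ``pairs of points on disjoint faces.'' Recall that $\Delta^{n+1}$ has $n+2$ vertices; a pair of disjoint faces is specified by a partition of a subset of the vertex set into two blocks. I would encode a point $x\in\Delta^{n+1}$ lying in a face $\sigma$ together with a point $y\in\Delta^{n+1}$ lying in a disjoint face $\tau$ as a single point of the $(n+1)$-fold join $(\{+1,-1\})^{*(n+2)}$, which is the boundary of the $(n+2)$-dimensional cross-polytope and hence is homeomorphic to $S^{n}$. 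Concretely, a point of this join is a formal convex combination $\sum_{i=1}^{n+2} t_i\, \varepsilon_i$ with $\varepsilon_i\in\{+1,-1\}$, $t_i\ge 0$, $\sum t_i=1$; the vertices with $\varepsilon_i=+1$ and positive weight determine $x$ (after renormalizing those weights) and the vertices with $\varepsilon_i=-1$ and positive weight determine $y$, so $x$ and $y$ automatically lie on disjoint faces. The free $\Z/2$-action on $S^{n}$ that matters here is the one flipping all the signs $\varepsilon_i\mapsto-\varepsilon_i$, which geometrically swaps the roles of $x$ and $y$.

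Next I would use the given map $f\colon\Delta^{n+1}\to\R^{n}$ to build a map $F\colon S^{n}\to\R^{n}$ by the rule $F(\text{point of the join}) = f(x) - f(y)$, where $x,y$ are extracted as above; one checks this is continuous (the only subtlety is at points where some $t_i\to 0$, where the corresponding vertex smoothly leaves the support of $x$ or $y$, and continuity of $f$ handles it) and that it is odd with respect to the sign-flip action, since swapping all signs swaps $x$ and $y$ and hence negates $f(x)-f(y)$. Since $S^{n}$ here is an actual topological $n$-sphere with a free $\Z/2$-action, the Borsuk--Ulam theorem in the form of Theorem~\ref{thm:borsuk-ulam} (or Theorem~\ref{thm:borsuk-ulam-alternate}) applies after identifying this $\Z/2$-space with the standard antipodal $S^{n}$: there must exist a point of the join with $F = 0$, i.e.\ a pair $x,y$ on disjoint faces with $f(x)=f(y)$.

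The step I expect to be the main obstacle is making the join-to-sphere identification rigorous and $\Z/2$-equivariant. The boundary of the cross-polytope $\diamond^{n+2} = \{ v\in\R^{n+2} : \|v\|_1 \le 1\}$ is a simplicial $n$-sphere, it is combinatorially the $(n+2)$-fold join $(S^0)^{*(n+2)}$, and the free antipodal involution on it is precisely the sign-flip involution described above; I would cite this standard fact (e.g.\ from Matou\v{s}ek~\cite{matousek2003using}) rather than reprove it. Given that, the only remaining care is the edge cases in the definition of $F$ where the support of $x$ or of $y$ is empty or where vertices have zero weight; I would note that if, say, all $\varepsilon_i=+1$ then ``$y$'' is undefined, but such a point is not antipodal to anything in the interior, and one can either define $F$ by a limiting convention there or simply observe that the zero set produced by Borsuk--Ulam cannot consist solely of such degenerate points because $F$ restricted to the fixed-point-free part already forces a zero. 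Once continuity and oddness of $F$ are in hand, the conclusion is immediate from Borsuk--Ulam.
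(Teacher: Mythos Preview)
Your overall strategy is the deleted-join route to topological Radon, a standard alternative to the paper's deleted-product route. But as written there are two coupled errors, and your handling of the degenerate points is where the argument actually breaks.

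\textbf{Dimension miscount.} The $(n+2)$-fold join $(S^0)^{*(n+2)}$, i.e.\ the boundary of the cross-polytope $\diamond^{n+2}\subset\R^{n+2}$, is homeomorphic to $S^{n+1}$, not $S^{n}$. (In general $(S^0)^{*m}\cong S^{m-1}$.) So your domain sphere is one dimension higher than you claim.

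\textbf{The map $F=f(x)-f(y)$ is not continuous.} At a join point where every $\varepsilon_i$ with $t_i>0$ equals $+1$, the point $y$ is undefined; worse, along paths approaching such a point the renormalized $y$ can converge to \emph{any} vertex of $\Delta^{n+1}$, so $f(y)$ has no limit and no ``limiting convention'' exists. Your alternative (``Borsuk--Ulam on the fixed-point-free part already forces a zero'') does not make sense: Borsuk--Ulam requires a continuous odd map on the whole sphere.

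The standard repair fixes both issues at once. Write $s_\pm=\sum_{\varepsilon_i=\pm 1}t_i$ and define
\[
\widetilde F\colon S^{n+1}\longrightarrow \R^{n+1},\qquad
\widetilde F(p)=\bigl(s_+\,f(x)-s_-\,f(y),\ s_+-s_-\bigr),
\]
with the convention that $s_\pm\cdot f(\,\cdot\,)=0$ when $s_\pm=0$. Since $f$ is continuous on the compact set $\Delta^{n+1}$ it is bounded, so $\widetilde F$ is continuous everywhere; it is odd under the sign-flip involution. Borsuk--Ulam now applies (domain $S^{n+1}$, target $\R^{n+1}$), and a zero of $\widetilde F$ forces $s_+=s_-=\tfrac12$ (so $x,y$ are both defined, on disjoint faces) and then $f(x)=f(y)$.

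\textbf{Comparison with the paper.} The paper argues via the deleted \emph{product} $(\Delta^{n+1})^2_\Delta$, assembling an odd map $S^n\to(\Delta^{n+1})^2_\Delta\to(\R^n)^2_\Delta\to S^{n-1}$ and invoking Theorem~\ref{thm:borsuk-ulam-odd}. Your (corrected) deleted-\emph{join} argument instead produces an odd map $S^{n+1}\to\R^{n+1}$. Both are in Matou\v{s}ek~\cite{matousek2003using}; the join version has the virtue that the identification $(\Delta^{n+1})^{*2}_\Delta\cong\partial\diamond^{n+2}\cong S^{n+1}$ is completely explicit, whereas the product version keeps all dimensions one lower but requires the less obvious fact that $\coind\bigl((\Delta^{n+1})^2_\Delta\bigr)\ge n$.
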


\begin{proof}[Proof sketch]
Assume for contradiction that there are no such $x$ and $y$.
We define two topological spaces from $\Delta^{n + 1}$ and $\R^{n}$, by taking a \emph{deleted product} of each in slightly different ways:
\begin{itemize}
\item Let $(\Delta^{n + 1})^{2}_{\Delta}$ be the space of pairs $(x_{1}, x_{2}) \in (\Delta^{n + 1})^{2}$, such that $x_{1}, x_{2}$ are on disjoint faces.
\item Let $(\R^{n})^{2}_{\Delta}$ be the space of pairs $(y_{1}, y_{2}) \in (\R^{n})^{2}$, such that $y_{1} \ne y_{2}$.
\end{itemize}
Note that both $(\Delta^{n + 1})^{2}_{\Delta}$ and $(\R^{n})^{2}_{\Delta}$ are $\Z/2$ spaces; the involution map in each case swaps the two coordinates.
Then under our assumption, $f$ induces an odd map $f^{2}_{\Delta} \colon (\Delta^{n + 1})^{2}_{\Delta} \to (\R^{n})^{2}_{\Delta}$ given by $(x_{1}, x_{2}) \mapsto (f(x_{1}), f(x_{2}))$.
The verification that $f^{2}_{\Delta}$ is odd goes as follows:
\begin{equation*}
f^{2}_{\Delta}(-(x_{1}, x_{2})) = f^{2}_{\Delta}(x_{2}, x_{1}) = (f(x_{2}), f(x_{1})) = -(f(x_{1}), f(x_{2})) = -f^{2}_{\Delta}(x_{1}, x_{2}).
\end{equation*}
It can be shown that there exist odd maps $S^{n} \to (\Delta^{n + 1})^{2}_{\Delta}$ and $(\R^{n})^{2}_{\Delta} \to S^{n - 1}$.
Then the composite map
\[ S^{n} \to (\Delta^{n + 1})^{2}_{\Delta} \to (\R^{n})^{2}_{\Delta} \to S^{n - 1} \]
is odd, contradicting Borsuk--Ulam (specifically, Theorem~\ref{thm:borsuk-ulam-odd}).
Therefore, there exist points $x, y \in \Delta^{n + 1}$ on disjoint faces, such that $f(x) = f(y)$, as desired.
\end{proof}

The proof above suggests defining the concepts of \emph{index} and \emph{coindex} below, which allow us to use spheres of various dimensions as a measuring stick for the topological complexity of a $\Z/2$ space.
Here we give definitions and a few basic facts; see~\cite[Chapter 5]{matousek2003using} for more background.
\begin{itemize}
\item
The \emph{$\Z/2$ index} (or just \emph{index}) of a $\Z/2$ space $X$ is defined to be
\[\ind(X)\coloneqq \min\{k\geq 0 ~|~ \text{there exists an odd map } X\to S^k\}.\]
\item
The \emph{$\Z/2$ coindex} (or just \emph{coindex}) of a $\Z/2$ space $X$ is defined to be
\[\coind(X)\coloneqq \max\{k\geq 0 ~|~ \text{there exists an odd map } S^k\to X\}.\]
\item
For all $n \ge 0$, we have $\ind(S^{n}) = \coind(S^{n}) = n$, by the Borsuk--Ulam theorem.
\item
For all $\Z/2$ spaces $X$, we have $\coind(X) \le \ind(X)$, by the Borsuk--Ulam theorem.
\item
If there exists an odd map $X \to Y$, then $\ind(X) \le \ind(Y)$ and $\coind(X) \le \coind(Y)$.
\item
If the $\Z/2$ space $X$ is not free, then $\ind(X)=\coind(X) = \infty$ because we may construct an odd map $S^{k} \to X$ for any $k \ge 0$ by taking the constant map to a fixed point of the $\Z/2$ action on $X$.
\end{itemize}
In the proof of Theorem~\ref{thm:bajmoczy-barany}, the existence of an odd map $S^{n} \to (\Delta^{n + 1})^{2}_{\Delta}$ shows that we have $\coind((\Delta^{n + 1})^{2}_{\Delta}) \ge n$, and the existence of an odd map $(\R^{n})^{2}_{\Delta} \to S^{n - 1}$ shows that $\ind((\R^{n})^{2}_{\Delta}) \le n - 1$.
But then, using the existence of the odd map $f^{2}_{\Delta} \colon (\Delta^{n + 1})^{2}_{\Delta} \to (\R^{n})^{2}_{\Delta}$, we have
\begin{equation*}
n \le \coind((\Delta^{n + 1})^{2}_{\Delta}) \le \coind((\R^{n})^{2}_{\Delta}) \le \ind((\R^{n})^{2}_{\Delta}) \le n - 1,
\end{equation*}
a contradiction.
We will use the concepts of index and coindex later in the paper.

We will also make use of the concept of a \emph{$k$-connected} space:
A space $X$ is \emph{$k$-connected} if the homotopy groups $\pi_{i}(X)$ are trivial for all $i \le k$.
For example, $X$ is 0-connected if and only if $X$ is path-connected, and
$X$ is 1-connected if and only if $X$ is simply connected.
If a CW complex $X$ is $k$-connected, and if a CW complex $Y$ is $\ell$-connected, then their join $X * Y$ is $(k + \ell + 2)$-connected.
An important property for us is the following:
\begin{equation}
\label{eq:connected-coindex}
\text{ 
If a $\Z/2$ space $X$ is $(k - 1)$-connected, then $\ind(X) \ge \coind(X) \ge k$.
}
\end{equation}
See Proposition~5.3.2 (iv) of~\cite{matousek2003using}, and its proof, for an explanation of this fact.
The proof proceeds as follows.
Pick any point in $X$, and then reflect under the $\Z/2$ action, to get an odd map $S^0\to X$.
Since $\pi_0(X)$ is trivial, we can connect these two points by a path, and then reflect that path via the $\Z/2$ action to get an odd map $S^1\to X$.
Since $\pi_1(X)$ is trivial, we can fill in this map of the circle with a disk, and then reflect via the $\Z/2$ to get an odd map $S^2\to X$.
We continue inductively in this manner, where at the second-to-last step we have obtained an odd map $S^{k-1}\to X$.
Since $\pi_{k-1}$ is trivial, we can fill in with a $k$-dimensional disk, and reflect to get an odd map $S^k\to X$, as desired.

Finally, we define $\Z/2$ versions of some standard concepts from topology:
\begin{itemize}
\item
A \emph{$\Z/2$ metric space} is a $\Z/2$ space $X$ which is also a metric space, and that satisfies $d_X(x, x') = d_X(-x, -x')$ for all $x, x' \in X$.
\item
Let $X, Y$ be $\Z/2$ spaces, and let $f_{0}, f_{1} \colon X \to Y$ be odd maps.
Then a \emph{$\Z/2$ homotopy} from $f_{0}$ to $f_{1}$ is a map $H \colon X \times [0, 1] \to Y$, such that $H(-, 0) = f_{0}$, $H(-, 1) = f_{1}$, and $H(-, t)$ is odd for all $t \in [0, 1]$.
In this case, we say $f_{0}$ and $f_{1}$ are \emph{$\Z/2$ homotopic}.
\item
Let $X, Y$ be $\Z/2$ spaces.
We say that $X, Y$ are \emph{$\Z/2$ homotopy equivalent}, denoted $X \simeq_{\Z/2} Y$, if there exist odd maps $f \colon X \to Y$ and $g \colon Y \to X$, such that $f \circ g$ is $\Z/2$ homotopic to the identity map on $Y$, and $g \circ f$ is $\Z/2$ homotopic to the identity map on $X$.
\end{itemize}
Note that if $X \simeq_{\Z/2} Y$, then $\ind(X) = \ind(Y)$ and $\coind(X) = \coind(Y)$.

\subsection{Background on Vietoris--Rips complexes}
\label{ssec:background-vr}

\subsection*{Simplicial complexes}

We identify a simplicial complex with its geometric realization.
For example, if $\{x_0,\ldots,x_m\}$ is a simplex in a simplicial complex, then we may write $\sum_{i=0}^m\lambda_i x_i$ to refer to a point in the geometric realization of this simplicial complex, where the barycentric coordinates $\lambda_i\ge 0$ satisfy $\sum_i \lambda_i=1$.
A simplicial map between two simplicial complexes indeed deserves the name ``map,'' since it induces a continuous function between geometric realizations.

\subsection*{Vietoris--Rips complexes}

For $X$ a metric space and $r \ge 0$, the \emph{Vietoris--Rips simplicial complex $\vr{X}{r}$} has vertex set $X$, and a nonempty finite subset $\sigma\subseteq X$ is a simplex when $\diam(\sigma)\le r$.
See Figure~\ref{fig:VR}.

The Vietoris--Rips complex is a clique complex (also called flag complex), which means that for every non-empty finite $\sigma \subseteq X$, the simplex $\sigma$ is in $\vr{X}{r}$ if and only if the edge $\{u,v\}$ is in $\vr{X}{r}$ for every pair $u,v \in \sigma$.
This property makes the Vietoris--Rips complex of a finite space suitable to be encoded in a computer, as the information of the 1-skeleton determines the whole complex.

The Vietoris--Rips complex was defined independently by Leopold Vietoris~\cite{Vietoris27} and Eliyahu Rips and has been studied for different reasons along the years; see~\cite{vietoris-obituary,Hausmann1995} for some history.
If $r$ is large enough, Rips used it to show that every hyperbolic group $G$ acts geometrically (by proper and cocompact isometries) on a contractible space, which is none other than $\vr{G}{r}$.
Here, the group $G$ is equipped with the metric induced by the shortest path distance in the Cayley graph $\Gamma(G,S)$ with respect to some generating set $S$ for $G$.
A key consequence of this result is that hyperbolic groups are finitely presented~\cite[Proposition 17, Chapter 4]{GH1990}.
Although it seems that Rips did not publish the result himself, Gromov attributes it to him in Lemma~1.7.A and Section 2.2 of~\cite{Gromov1987}.

\begin{figure}[h]
\includegraphics[width=0.9in]{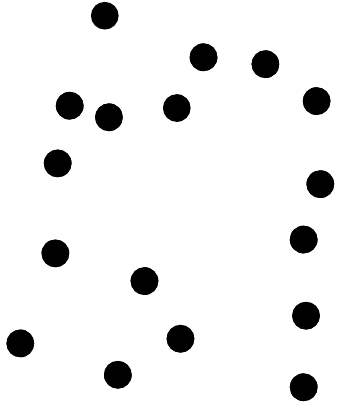}
\hspace{0.1in}
\includegraphics[width=0.9in]{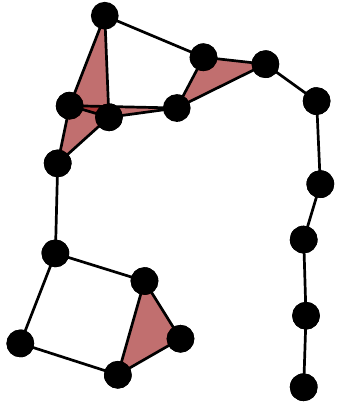}
\hspace{0.1in}
\includegraphics[width=0.9in]{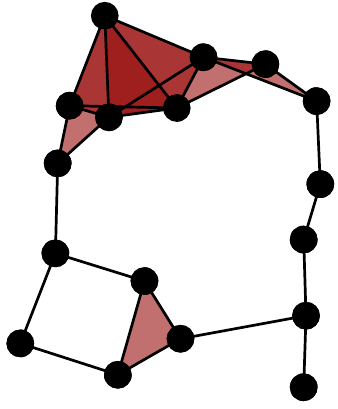}
\hspace{0.1in}
\includegraphics[width=0.9in]{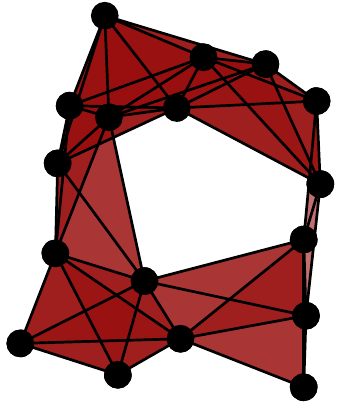}
\hspace{0.1in}
\includegraphics[width=0.9in]{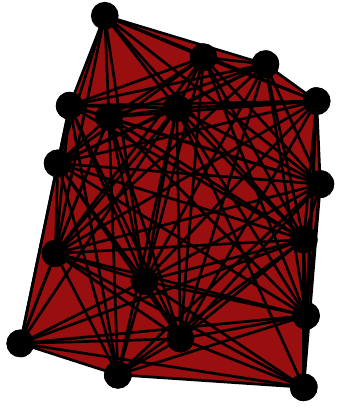}
\captionsetup{width=1\textwidth}
\caption{A metric space $X$ with 17 points, and its Vietoris--Rips complex $\vr{X}{r}$ at four different increasing values of $r>0$.}
\label{fig:VR}
\end{figure}

When $r>0$ is small, on the other hand,
a theorem due to Hausmann~\cite{Hausmann1995} implies that for a given compact Riemannian manifold $M$, there exists some $0<\varepsilon$ such that $M\simeq \vr{M}{r}$ whenever $0<r<\varepsilon$.
Researchers in applied topology are interested in the topology of $\vr{X}{r}$ over all values $r>0$ as a tool to coarsely study the shape of a finite point cloud $X$.
See, for instance, Section 2.3 of~\cite{Carlsson2009}.
These experimental studies of the ``shape of data'' are aided by the fact that the Vietoris--Rips complex is a clique or flag simplicial complex whose persistent homology is relatively efficient to compute~\cite{bauer2021ripser}.
In this paper, we allow the scale parameter $r$ to become large enough so as to change the topology (e.g., the (co)index) of the simplicial complex.

For $X$ a $\Z/2$ metric space and $r\geq 0$, we extend the involution on $X$ to an involution on $\vr{X}{r}$ by defining
\[ \textstyle{
-\left(\sum_i \lambda_i x_i\right)\coloneqq \sum_i\lambda_i (-x_i).
} \]
If $X$ is a free $\Z/2$ metric space, then note that $\vr{X}{r}$ is a free $\Z/2$ space whenever $r<\inf_{x\in X}d_X(x,-x)$.
In particular, $\vr{S^n}{r}$ is a free $\Z/2$ space for $r<\pi$.

A simplicial map between two simplicial complexes induces a continuous map on the geometric realizations of those smplicial complexes.
Therefore, the following lemma shows that Vietoris--Rips complexes are a tool for transforming arbitrary functions between metric spaces into continuous maps between topological spaces; see also~\cite[Lemma~4.3]{ChazalDeSilvaOudot2014}.
Despite the popularity of Vietoris--Rips complexes, this perspective of using Vietoris--Rips complexes to study discontinuous functions appears to be new.

\begin{lemma}\label{lem:ind-map-distortion}
A function $f\colon X \to Y$ between metric spaces induces a simplicial map $\overline{f}:\vr{X}{r}\to\vr{Y}{r+\dis(f)}$ for any $r\ge0$.
If $f$ is an odd function, then $\overline{f}$ is also odd.
\end{lemma}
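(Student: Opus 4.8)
The plan is to verify directly that the vertex map $f$ sends simplices of $\vr{X}{r}$ to simplices of $\vr{Y}{r+\dis(f)}$, so that it extends linearly to a simplicial map $\overline{f}$, and then to check equivariance. First I would recall that a simplicial map is determined by what it does on vertices, together with the requirement that the image of every simplex is a simplex. Since the vertex sets of $\vr{X}{r}$ and $\vr{Y}{r+\dis(f)}$ are $X$ and $Y$ respectively, the candidate vertex map is simply $f$ itself. So the only thing to check is the simplex condition.

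Next I would take an arbitrary simplex $\sigma = \{x_0,\dots,x_m\} \subseteq X$ of $\vr{X}{r}$, which by definition means $\diam(\sigma) \le r$, i.e.\ $d_X(x_i,x_j) \le r$ for all $i,j$. I want to show $f(\sigma) = \{f(x_0),\dots,f(x_m)\}$ is a simplex of $\vr{Y}{r+\dis(f)}$, i.e.\ has diameter at most $r + \dis(f)$. For any $i,j$, the definition of distortion gives $|d_X(x_i,x_j) - d_Y(f(x_i),f(x_j))| \le \dis(f)$, hence $d_Y(f(x_i),f(x_j)) \le d_X(x_i,x_j) + \dis(f) \le r + \dis(f)$. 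Since this holds for every pair, $\diam(f(\sigma)) \le r + \dis(f)$, so $f(\sigma)$ is indeed a simplex (note $f(\sigma)$ is nonempty and finite). This shows $f$ induces a simplicial map $\overline{f}\colon \vr{X}{r} \to \vr{Y}{r+\dis(f)}$, acting on a point $\sum_i \lambda_i x_i$ of the geometric realization by $\sum_i \lambda_i f(x_i)$; one should note that this is well-defined even if the $f(x_i)$ are not distinct, by collecting barycentric coordinates.

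For the equivariance claim, suppose $f$ is odd, so $X$ and $Y$ are $\Z/2$ metric spaces and $f(-x) = -f(x)$ for all $x$. Using the involution on Vietoris--Rips complexes defined in the excerpt, $\overline{f}\left(-\sum_i \lambda_i x_i\right) = \overline{f}\left(\sum_i \lambda_i(-x_i)\right) = \sum_i \lambda_i f(-x_i) = \sum_i \lambda_i(-f(x_i)) = -\sum_i \lambda_i f(x_i) = -\overline{f}\left(\sum_i \lambda_i x_i\right)$, so $\overline{f}$ is odd. I expect no real obstacle here; the only points requiring a modicum of care are that the distortion inequality is applied in the correct direction (we only need the upper bound on $d_Y$, not the lower bound), that empty/degenerate cases are handled, and that the induced map on geometric realizations is well-defined when $f$ is not injective on the vertices of a simplex — all routine.
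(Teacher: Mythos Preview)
Your proof is correct and follows essentially the same approach as the paper's: define $\overline{f}$ on vertices by $f$ and extend linearly, use the distortion bound to show $\diam(f(\sigma)) \le r + \dis(f)$, and verify oddness by the same chain of equalities. The only difference is that you spell out the pairwise estimate and the non-injectivity caveat a bit more explicitly, which is fine.
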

    
\begin{proof}
Define $\overline{f}\colon \vr{X}{r}\to\vr{Y}{\dis(f)+r}$ by sending a vertex $x\in X$ to $f(x)\in Y$, and then extending linearly to simplices.
In other words,
\[\overline{f}([x_0,\ldots,x_m])=[f(x_0),\ldots,f(x_m)].\]
Observe that if $\diam(\sigma) \le r$ then, by the definition of distortion, $\diam(f(\sigma)) \le r+\dis(f)$.
Thus, $\overline{f}$ is well-defined, simplicial, and continuous (on the underlying geometric realizations), i.e.\ it is a map.
    
If both $X$ and $Y$ are $\Z/2$ metric spaces and $f$ is an odd function, then we see that $\overline{f}$ is an odd map:
\begin{align}
\label{eq:f-odd}
\textstyle{\overline{f}\left(-\sum_i\lambda_i x_i\right) = \overline{f}\left(\sum_i\lambda_i (-x_i)\right)} &= \textstyle{\sum_i\lambda_i f(-x_i)} \\
&=\textstyle{\sum_i\lambda_i (-f(x_i))=-\sum_i\lambda_i f(x_i).} \nonumber
\end{align}
\end{proof}

Lemma~\ref{lem:ind-map-distortion} shows how to turn a possibly discontinuous function into a continuous one; a precursor of this idea is present in~\cite{dubins1981equidiscontinuity}.
In a similar spirit,~\cite{berestovskii2007uniform,brodskiy2013rips,plaut2013discrete,rieser2021cech} study the induced maps on the fundamental group of the Vietoris--Rips complexes of metric spaces via the discrete homotopy approach: one allows paths and homotopies to have discontinuities of size $\varepsilon$.
By considering maps between metric spaces that induce maps between Vietoris--Rips complexes up to a finite amount of shift, Cencelj et al.~\cite{cencelj2012combinatorial} studied the coarse geometry or large-scale properties of metric spaces.

\subsection*{Vietoris--Rips metric thickenings}
Let $X$ be a metric space and let $r\geq 0$.
The \emph{Vietoris--Rips metric thickening} $\vrm{X}{r}$ of $X$ at scale $r$ is the set of probability measures~$\mu$ in $X$ whose support $\supp(\mu)$ is finite and has diameter at most~$r$, equipped with the $1$-Wasserstein metric of optimal transport~\cite{AAF}.
The superscript $\mathrm{m}$ denotes ``metric'', since the metric thickening $\vrm{X}{r}$ is a metric space, whereas the simplicial complex $\vr{X}{r}$ may not be metrizable if $X$ is not discrete.
By identifying each point $x_i \in X$ with the Dirac measure~$\delta_{x_i}$, we can write elements $\mu \in \vrm{X}{r}$ as convex combinations $\mu = \sum_{i=0}^m \lambda_i\delta_{x_i}$, where $\lambda_i \ge 0$, $\sum_i \lambda_i = 1$, and $x_0, \dots, x_m \in X$ with $d_X(x_i,x_j) \le r$ for all $0\le i,j\le m$.
In this way there is a natural isometric embedding from $X$ into~$\vrm{X}{r}$, via the injective map $x\mapsto \delta_x$.
Furthermore, note that the underlying set of the metric thickening $\vrm{X}{r}$ is equal to the underlying set of (the geometric realization of) the simplicial complex $\vr{X}{r}$, although the topology of these two spaces may differ~\cite{AAF}.
In analogy with Hausmann's theorem for simplicial complexes~\cite{Hausmann1995}, metric thickenings are known to recover the homotopy type of the underlying metric space in certain situations~\cite{AAF,AM}.

Occasionally, it will be convenient to work with the metric thickenings instead of simplicial complexes.
However, we will not emphasize metric thickenings and instead refer the reader to~\cite{ABF,HA-FF-ZV,AdamsHeimPeterson,AMMW,gillespie2024vietoris} for further work on these spaces.

\section{Proof of the Main Theorem}
\label{sec:main-theorem-proof}

We are prepared to prove our Main Theorem, which lower bounds the distortion of odd maps between spheres, and hence also the Gromov--Hausdorff distance between spheres of different dimensions.
We will make use of Vietoris--Rips complexes in order to transform an odd function $f$ between spheres into a continuous odd map between Vietoris--Rips complexes of spheres, where the allowable choices of scale parameters depend on how much the function $f$ distorts distances.
Towards these ends, we consider the coindex of Vietoris--Rips complexes of spheres.
We recall the definition of $c_{n,k}$ from Section~\ref{sec:intro}.

\begin{definition-cnk}
For $k\ge n$, we define
\[c_{n,k} \coloneqq \inf\{r\ge 0 \mid \text{there exists an odd map }S^k \to \vr{S^n}{r}\}.\]
\end{definition-cnk}

That is, $c_{n,k}$ is the infimum over all $r\ge 0$ for which $k\le \coind(\vr{S^n}{r})$.
We think of $c_{n,k}$ as the amount we need to ``thicken'' $S^n$ until it admits an odd map from $S^k$.

Let $k\ge n$, and let $f\colon S^k \to S^n$ be an odd function.
In our Main Theorem, we prove $\dis(f)\geq c_{n, k}$.
We remark that Proposition~5.2 of~\cite{lim2023gromov} proves that the distortion of a function is lower bounded by its modulus of discontinuity, which in turn can be controlled as in~\cite{dubins1981equidiscontinuity}.
In Section~\ref{sec:gen-ds} we show that our lower bound on distortion can be strengthened into an analogous lower bound on the modulus of discontinuity of odd functions $S^k \to S^n$.

Our proof of the Main Theorem relies on the following lemma.
We say a subset $A$ of a metric space $X$ is an \emph{$\varepsilon$-covering} if for every point $x\in X$, there exists a point $a\in A$ with $d_X(a,x)<\varepsilon$, i.e.\ with $x \in B(a,\varepsilon)$.

\begin{lemma}\label{lem:covering}
For $X \subset S^k$ a finite $\tfrac{\varepsilon}{2}$-covering with $X=-X$ (that is, $X$ is centrally-symmetric), there exists an odd map
$\phi \colon S^k \to \vr{X}{\varepsilon}$.
\end{lemma}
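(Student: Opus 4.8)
The plan is to build $\phi$ as a ``partition of unity'' map, exactly as one constructs a map from a space into the nerve of a cover. First I would use the fact that $X$ is a finite $\tfrac{\varepsilon}{2}$-covering of $S^k$ to fix the finite collection of open balls $\{B(a;\tfrac{\varepsilon}{2})\}_{a\in X}$, which cover $S^k$. Then I would choose a partition of unity $\{\psi_a\}_{a\in X}$ subordinate to this cover; the natural choice here is the ``distance to complement'' partition, $\psi_a(x)\coloneqq \frac{\rho_a(x)}{\sum_{a'\in X}\rho_{a'}(x)}$ where $\rho_a(x)\coloneqq \max\{0,\ \tfrac{\varepsilon}{2}-d_{S^k}(x,a)\}$, so that $\psi_a(x)>0$ exactly when $d_{S^k}(x,a)<\tfrac{\varepsilon}{2}$. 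Each $\psi_a$ is continuous, the sum is everywhere positive (by the covering property), and only finitely many are nonzero, so $\phi(x)\coloneqq \sum_{a\in X}\psi_a(x)\,a$ is a well-defined continuous function $S^k\to\vr{X}{\varepsilon}$ — provided the support $\{a : \psi_a(x)>0\}$ is always a simplex of $\vr{X}{\varepsilon}$.

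The key step is checking that this support really is a simplex, i.e.\ that its diameter is at most $\varepsilon$. If $\psi_a(x)>0$ and $\psi_{a'}(x)>0$, then $d_{S^k}(x,a)<\tfrac{\varepsilon}{2}$ and $d_{S^k}(x,a')<\tfrac{\varepsilon}{2}$, so by the triangle inequality $d_{S^k}(a,a')<\varepsilon$; hence every pair in the support is within $\varepsilon$, and since $\vr{X}{\varepsilon}$ is a clique complex, the whole support spans a simplex. This makes $\phi$ land in $\vr{X}{\varepsilon}$ and, being given by continuous barycentric coordinates, it is continuous into the geometric realization. (Strictly $\varepsilon$-covering with strict inequalities gives a closed-ball diameter bound of $\varepsilon$, which is exactly what $\vr{X}{\varepsilon}$ allows.)

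Next I would verify oddness, which is why we need $X=-X$. Since $d_{S^k}(-x,-a)=d_{S^k}(x,a)$ and $-a$ ranges over $X$ as $a$ does, we get $\rho_{-a}(-x)=\rho_a(x)$, hence $\psi_{-a}(-x)=\psi_a(x)$ for every $a\in X$. Therefore
\[
\phi(-x)=\sum_{a\in X}\psi_a(-x)\,a=\sum_{a\in X}\psi_{-a}(-x)\,(-a)=\sum_{a\in X}\psi_a(x)\,(-a)=-\sum_{a\in X}\psi_a(x)\,a=-\phi(x),
\]
using the definition of the involution on $\vr{X}{r}$ from Section~\ref{ssec:background-vr}. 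Thus $\phi$ is an odd map, completing the proof.

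The only real obstacle is the support-is-a-simplex check, and it is mild: it is just the triangle inequality combined with the clique-complex property of Vietoris--Rips complexes; everything else (continuity of the $\psi_a$, positivity of the denominator, finiteness of the support, and the symmetry computation) is routine. One small point to be careful about is matching strict versus non-strict inequalities — using $\rho_a(x)=\max\{0,\tfrac{\varepsilon}{2}-d_{S^k}(x,a)\}$ makes $\psi_a(x)>0 \iff d_{S^k}(x,a)<\tfrac{\varepsilon}{2}$, so the pairwise distances in the support are strictly less than $\varepsilon$, comfortably within the diameter-$\le\varepsilon$ requirement of $\vr{X}{\varepsilon}$.
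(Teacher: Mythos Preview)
Your proof is correct and follows essentially the same partition-of-unity approach as the paper: define $\phi(x)=\sum_{a\in X}\psi_a(x)\,a$ for a partition of unity subordinate to $\{B(a;\tfrac{\varepsilon}{2})\}_{a\in X}$, check the support is a simplex via the triangle inequality, and verify oddness from $\psi_{-a}(-x)=\psi_a(x)$. The only difference is cosmetic: the paper asserts existence of a $\Z/2$-invariant partition of unity (pulled back from $\RP^k$), whereas you write one down explicitly via $\rho_a(x)=\max\{0,\tfrac{\varepsilon}{2}-d_{S^k}(x,a)\}$, which automatically has the needed symmetry.
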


\begin{proof}
We use the following ``partition of unity'' idea from the proof of stability in~\cite{AMMW,MoyMasters}.
It suffices to consider $\varepsilon < \pi$, since otherwise $\vr{X}{\varepsilon}$ is not a free $\Z/2$ space and $\coind(\vr{X}{\varepsilon})=\infty$.
Let $\{\rho_x\}_{x\in X}$ be a $\Z/2$ invariant partition of unity subordinate to the cover $\{B\left(x, \tfrac{\varepsilon}{2}\right)\}_{x\in X}$ of $S^k$.
That is,
\begin{itemize}
\item $\rho_x$ is a nonnegative continuous real-valued function supported in $B\left(x, \tfrac{\varepsilon}{2}\right)$ for each $x\in X$,
\item $\sum_{x\in X} \rho_x(y) = 1$ for all $y\in S^k$, and
\item $\rho_{-x}(-y) = \rho_{x}(y)$ for all $x\in X$ and $y\in S^k$.
\end{itemize}
To see that such a $\Z/2$ invariant partition of unity exists, note that it can be obtained from a (standard) partition of unity on the quotient space $\RP^n$.

Define the map $\phi\colon S^k \to \vr{X}{\varepsilon}$ by
$\phi(y) \coloneqq \sum_{x\in X} \rho_x(y)\ x$.
Note that any point $x$ whose coefficient in $\phi(y)$ is positive must have $d_{S^k}(x, y)< \tfrac{\varepsilon}{2}$ because $\rho_x$ is supported on $B\left(x, \tfrac{\varepsilon}{2}\right)$.
Therefore, $\diam(\{x\in X \mid \rho_x(y)>0\}) < \varepsilon$, so $\phi(y)$ is a well-defined point in $\vr{X}{\varepsilon}$.
Note that $\phi$ is continuous since each $\rho_x$ is.
Lastly,
\[ \phi(-y)
= \sum_{x\in X} \rho_x(-y)\ x
= \sum_{x\in X} \rho_{-x}(y)\ x
= \sum_{x\in -X} \rho_{x}(y)\ (-x),
\]
which, after applying $X=-X$, is equal to \[\sum_{x\in X} \rho_{x}(y)\ (-x)
= \sum_{x\in X} \rho_{x}(-y)\ x
=-\phi(y).\]
Thus, $\phi$ is an odd map.
\end{proof}

We remark that choosing a different partition of unity will produce a map that is homotopic to $\phi$.
Indeed, given two partitions of unity $\{\rho^1_x\}_{x\in X}$ and $\{\rho^2_x\}_{x\in X}$, the homotopy between the corresponding maps $\phi_1(y) \coloneqq \sum_{x\in X} \rho^1_x(y)\ x$ and $\phi_2(y) \coloneqq \sum_{x\in X} \rho^2_x(y)\ x$ can be given by a straight line homotopy $H(-,t) \coloneqq t\phi_1+(1-t)\phi_2$.

\begin{figure}[h]
\centering
\begin{tikzcd} [row sep=-2pt,column sep=0pt]
S^k 
\ar[r,"{\substack{\text{partition} \\ \text{of unity}}}"]
&
\vr{X}{\varepsilon}
\ar[r,""]
&
\vr{S^n}{\dis(f)+\varepsilon}
\\
&
 {[x_0,\ldots,x_m]}
\ar[r, maps to]
&
{[f(x_0),\ldots,f(x_m)]}
\\
\hspace{2em}
\begin{tikzpicture}[scale=0.6]
\filldraw[fill=none](0,0) circle (1.5);
\draw [dashed](-1.5,0) to[out=90,in=90,looseness=.5] (1.5,0);
\draw (1.5,0) to [out=270,in=270,looseness=.5] (-1.5,0);
\node[right] at (1,-1)  {$S^k$};
\end{tikzpicture}
&
\hspace{2em}
\begin{tikzpicture}[scale=0.6]
\filldraw[fill=none,color=gray](0,0) circle (1.5);
\draw [dashed, color=gray](-1.5,0) to[out=90,in=90,looseness=.5] (1.5,0);
\draw [color=gray] (1.5,0) to [out=270,in=270,looseness=.5] (-1.5,0);
\node[right,color=gray] at (1,-1)  {$S^k$};
\node[fill=red,circle,inner sep=1pt] (a) at (.8,1.1) {};
\node[fill=red,circle,inner sep=1pt] (b) at (1.3,-.2) {};
\node[fill=red,circle,inner sep=1pt] (c) at (.3,.4) {};
\draw [color=red] (a)--(b)--(c)--(a);
\node[draw=red, fill=none, circle,inner sep=.9pt] (-a) at (-.8,-1.1) {};
\node[draw=red, fill=none,circle,inner sep=.9pt] (-b) at (-1.3,.2) {};
\node[draw=red, fill=none,circle,inner sep=.9pt] (-c) at (-.3,-.4) {};
\draw [color=red] (-a)--(-b)--(-c)--(-a);
\end{tikzpicture}
&
\hspace{2em}
\begin{tikzpicture}[scale=0.6]
\filldraw[fill=none,color=black](0,0) circle (1.5);
\node[right,color=black] at (1,-1)  {$S^n$};
\node[fill=red,circle,inner sep=1pt] (a) at (1.2,0.9) {};
\node[fill=red,circle,inner sep=1pt] (b) at (1.5,-.2) {};
\node[fill=red,circle,inner sep=1pt] (c) at (.5,1.4) {};
\draw [color=red] (a)--(b)--(c)--(a);
\node[draw=red, fill=none, circle,inner sep=.9pt] (-a) at (-1.2,-.9) {};
\node[draw=red, fill=none,circle,inner sep=.9pt] (-b) at (-1.5,.2) {};
\node[draw=red, fill=none,circle,inner sep=.9pt] (-c) at (-.5,-1.4) {};
\draw [color=red] (-a)--(-b)--(-c)--(-a);
\end{tikzpicture}
\end{tikzcd}
\caption{Proof, in our Main Theorem, that odd functions $f\colon S^k \to S^n$ for $k\ge n$ have distortion at least~$c_{n,k}$.}
\label{fig:proof-sketch}
\end{figure}
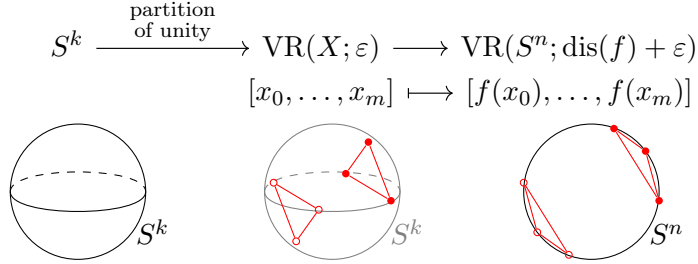

We are now ready to prove our Main Theorem.

\begin{theorem-main}
For all $k \ge n$, the following inequalities hold:
\begin{align*}
2\cdot d_{\gh}(S^n, S^k)
&\ge \inf \left\{ \dis(f) \mid f \colon S^k \to S^n \textnormal{ is odd}\right\} \\
&\ge \inf \left\{r\ge 0 \mid \exists \textnormal{ odd }S^k \to \vr{S^n}{r}\right\} \eqqcolon c_{n,k}.
\end{align*}
\end{theorem-main}

\begin{proof}
Let $k\ge n$, and let $f \colon S^k \to S^n$ be an odd function.
We must show that $\dis(f) \ge c_{n,k}$.
Let $\varepsilon > 0$.
Choose a finite $\Z/2$ invariant $\tfrac{\varepsilon}{2}$-covering $X\subset S^k$.
By Lemma~\ref{lem:covering} we get an odd map $S^k \to \vr{X}{\varepsilon}$, and by Lemma~\ref{lem:ind-map-distortion} the restriction map $f|_X \colon X \to S^n$ induces a continuous odd map $\vr{X}{\varepsilon} \to \vr{S^n}{\dis(f)+\varepsilon}$.
Their composition
\[S^k \to \vr{X}{\varepsilon} \to \vr{S^n}{\dis(f)+\varepsilon}\]
is continuous and odd, showing that $\dis(f)+\varepsilon \ge c_{n,k}$ for all $\varepsilon > 0$.
Hence $\dis(f) \ge c_{n,k}$.

The first inequality in the Main Theorem is the helmet trick from~\cite{lim2023gromov}, which for the sake of completeness we briefly explain here.
Lemma~5.5 from~\cite{lim2023gromov} states that any function $h \colon S^k \to S^n$ can be modified to obtain an \emph{odd} function $f\colon S^k \to S^n$ with $\dis(h)\ge \dis(f)$.
Therefore
\begin{align*}
2\cdot d_{\gh}(S^n,S^k) &= \inf_{\substack{g\colon S^n \to S^k \\ h\colon S^k \to S^n}}\max\{\dis(g),\dis(h),\codis(g,h)\} && \text{by~\eqref{eq:dgh}}\\
&\ge \inf \left\{ \dis(h) \mid h \colon S^k \to S^n \right\} \\
&\ge \inf \left\{ \dis(f) \mid f \colon S^k \to S^n \textnormal{ is odd}\right\}.
\end{align*}
\end{proof}

The quantitative power of our Main Theorem will come from Section~\ref{sec:known-values-cnk}, where we explain how to recover the known values of $c_{n,k}$.
For example, we will see that $c_{n,n+1}=r_n$ (Theorem~\ref{thm:c-n-n+1}), and thus our Main Theorem indeed recovers~\cite[Theorem~B]{lim2023gromov} when $k=n+1$.
We will see $c_{1,2\ell}=c_{1,2\ell+1}=\tfrac{2\pi \ell}{2\ell+1}$ (Theorem~\ref{thm:c-1-k}), and therefore $2\cdot d_{\gh}(S^1,S^k) \ge \tfrac{2\pi \ell}{2\ell+1}$ for $k=2\ell,2\ell+1$.
Furthermore, we will see that for all $k\ge n$, $c_{n,k}$ can be bounded from below in terms of the covering number of $k$ points in the projective space $\R P^n$ (Theorem~\ref{thm:proj-packings}).
The combination of these theorems implies that the bound $2\cdot d_{\gh}(S^n, S^k) \ge c_{n,k}$ in our Main Theorem either recovers or improves upon the best known lower bounds on $d_{\gh}(S^n, S^k)$ from~\cite{lim2023gromov}.
In other words, the bound $2\cdot d_{\gh}(S^n, S^k) \ge c_{n,k}$ is potentially tight; see Remark~\ref{rem:improvement}.
Therefore, our Main Theorem shows that a powerful technique for studying the Gromov--Hausdorff distance between spheres is to obstruct the existence of equivariant maps to Vietoris--Rips complexes of spheres.
And, in the opposite direction, further knowledge about Gromov--Hausdorff distances between spheres will place new constraints on the topology of Vietoris--Rips complexes of spheres.

\begin{remark}
\label{rem:odd-distortion-bound}
The same proof technique of our Main Theorem shows that for any $\Z/2$ space $Y$, odd maps $S^k \to Y$ have distortion at least $\inf\{r\ge 0 \mid \exists\text{ an odd map }S^k \to \vr{Y}{r}\}$.
\end{remark}

In analogy with~\cite[Theorem D]{lim2023gromov}, the proof technique of our Main Theorem 
can provide a more general statement.
Let $H_\ge(S^k)$ denote the closed upper hemisphere of the sphere, namely $H_\ge(S^k)\coloneqq \{(x_1,\ldots,x_{k+1})\in S^k~|~x_{k+1}\ge 0\}$.

\begin{theorem}
Let $X$ and $Y$ be bounded metric spaces such that $X$ isometrically embeds into $S^n$ and $Y$ admits an isometric embedding of $H_\ge(S^k)$, for $k \ge n$.
Then
\[2\cdot d_{\gh}(X, Y) \geq \inf\{r\ge 0 \mid \exists\text{ an odd map }S^k \to \vr{X}{r}\} \geq c_{n,k}.\]
\end{theorem}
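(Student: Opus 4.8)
The plan is to reduce $2\cdot d_{\gh}(X,Y)$ to the distortion of a single function $h\colon Y\to X$, to compress $h$ into an \emph{odd} function $S^k\to X$ by a hemisphere version of the ``helmet trick'' of Lim, M\'emoli, and Smith, and then to feed this odd function into the argument of the Main Theorem (as packaged in Remark~\ref{rem:odd-distortion-bound}). Throughout I will regard $X$ as a centrally-symmetric subset of $S^n$, which is what makes $\vr{X}{r}$ into a $\Z/2$ space and hence gives the statement content; it then inherits the antipodal involution, and, since $X\subseteq S^n$, one has $d_X(a,-b)=\pi-d_X(a,b)$ for all $a,b\in X$.

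First I would use \eqref{eq:dgh} to write $2\cdot d_{\gh}(X,Y)=\inf_{g,h}\max\{\dis(g),\dis(h),\codis(g,h)\}\ge\inf_h\dis(h)$, where $h$ ranges over all functions $Y\to X$; this reduces the first asserted inequality to showing $\dis(h)\ge\inf\{r\ge 0\mid\exists\text{ odd }S^k\to\vr{X}{r}\}$ for each fixed $h$. Since $Y$ contains an isometric copy of $H_\ge(S^k)$ (with the metric inherited from $S^k$), and since the closed hemisphere $H_\ge(S^k)$ is geodesically convex in $S^k$, so that its metric is the restriction of $d_{S^k}$, the restriction $h'\colon H_\ge(S^k)\to X$ satisfies $\dis(h')\le\dis(h)$.

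Next comes the helmet step. I would choose a subset $F$ of the equator $S^{k-1}=\{x_{k+1}=0\}$ with $F\sqcup(-F)=S^{k-1}$, and define $f\colon S^k\to X$ by $f(x)=h'(x)$ when $x_{k+1}>0$ or $x\in F$, and $f(x)=-h'(-x)$ otherwise; this lands in $X$ because $X=-X$, and $f$ is odd by construction. To bound $\dis(f)$ I would split into cases according to which rules define $f(x)$ and $f(y)$: if both values come from the ``$h'$'' rule then $|d_{S^k}(x,y)-d_{S^n}(f(x),f(y))|\le\dis(h')$ is immediate since $x,y\in H_\ge(S^k)$; if both come from the ``$-h'(-\cdot)$'' rule it follows after using that the antipodal map is an isometry of $S^n$ together with $d_{S^k}(-x,-y)=d_{S^k}(x,y)$; and if they come from opposite rules, say $f(x)=h'(x)$ and $f(y)=-h'(-y)$ with $x,-y\in H_\ge(S^k)$, it follows from $d_{S^n}(a,-b)=\pi-d_{S^n}(a,b)$ and $d_{S^k}(x,-y)=\pi-d_{S^k}(x,y)$. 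Hence $\dis(f)\le\dis(h')\le\dis(h)$. Finally, $f$ is an odd function into the $\Z/2$ metric space $X$, so Remark~\ref{rem:odd-distortion-bound}, which is exactly the argument of the Main Theorem with target $X$ in place of $S^n$, gives $\dis(f)\ge\inf\{r\ge 0\mid\exists\text{ odd }S^k\to\vr{X}{r}\}$; chaining the inequalities yields the first displayed inequality.

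The second inequality is routine: the inclusion $X\hookrightarrow S^n$ is an isometric, $\Z/2$-equivariant embedding of distortion $0$, so by Lemma~\ref{lem:ind-map-distortion} it induces an odd simplicial map $\vr{X}{r}\to\vr{S^n}{r}$ for every $r\ge 0$, and composing an odd map $S^k\to\vr{X}{r}$ with it produces an odd map $S^k\to\vr{S^n}{r}$; hence $\inf\{r\ge 0\mid\exists\text{ odd }S^k\to\vr{X}{r}\}\ge c_{n,k}$. The hard part will be the helmet step: one must verify that the hemisphere analogue of~\cite[Lemma~5.5]{lim2021gromov} really does produce an odd function without increasing the distortion --- this is the point at which both hypotheses, that $Y$ contains a hemisphere of $S^k$ and that $X$ is centrally symmetric, are used --- and confirm that the arbitrary choice of a fundamental domain $F$ on the equator does no harm to the distortion estimate.
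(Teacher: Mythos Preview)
Your argument is correct and follows exactly the route the paper has in mind: the paper does not give a separate proof but simply says the proof technique of the Main Theorem yields this statement, and that is precisely your combination of the hemisphere helmet trick with Remark~\ref{rem:odd-distortion-bound} and the inclusion-induced map of Lemma~\ref{lem:ind-map-distortion}. Your observation that the statement only has content when $X$ is taken to be a centrally-symmetric subset of $S^n$ (so that $\vr{X}{r}$ is a $\Z/2$ space and the identity $d_X(a,-b)=\pi-d_X(a,b)$ is available for the mixed case of the helmet step) is well taken; this hypothesis is implicit in the paper's formulation.
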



\section{Known values of $c_{n,k}$}
\label{sec:known-values-cnk}

In this section, we add quantitative power to our Main Theorem by describing the known values of the constants $c_{n,k} \coloneqq \inf \left\{r\ge 0 \mid \exists \text{ odd }S^k \to \vr{S^n}{r}\right\}$.
These results depend on the topology of Vietoris--Rips complexes and thickenings of spheres.
Indeed, the topology of $\vr{S^n}{r}$ constrains how large the scale $r$ must be in order for the complex to admit an odd map from the $k$-sphere.

We begin with some basic properties that follow from the definition of $c_{n,k}$.
The inclusion $S^k \hookrightarrow S^{k'}$ shows that $c_{n,k}\le c_{n,k'}$ for $k\le k'$.
Also, the inclusion $\vr{S^{n'}}{r} \hookrightarrow \vr{S^n}{r}$ shows that $c_{n,k}\le c_{n',k'}$ for $n\ge n'$ and $k\le k'$.
Since $\pi$ is the diameter of $S^n$, it follows that $\vr{S^n}{\pi}$ is contractible, and therefore $c_{n,k}\le \pi$ for all $k\ge n$.

Next, we observe that $c_{n,k}$ has several different equivalent definitions.
The value of $c_{n,k}$ is unchanged if one uses the convention ``$\diam(\sigma)< r$'' (instead of our convention ``$\diam(\sigma)\le r$'') to define which simplices $\sigma$ are in the Vietoris--Rips complex.
Similarly, the value of $c_{n,k}$ is unchanged if one instead uses Vietoris--Rips metric thickenings --- this follows from the $\varepsilon$-interleavings constructed in~\cite{AMMW,MoyMasters}, which in this setting can be made $\Z/2$ equivariant; see also~\cite{gillespie2024vietoris}.

We have $c_{n,n}=0$ since $\vrm{S^n}{0}=S^n$, or alternatively, since $\vr{S^n}{\varepsilon}\simeq_{\Z/2} S^n$ for all $\varepsilon>0$ sufficiently small.
We also have $c_{0,k}=\pi$ for all $k > 0$, which relies on the convention that $\diam(S^0)=\pi$.

\begin{theorem}
\label{thm:c-1-k}
For all $\ell\ge 1$, we have $c_{1,2\ell+1}=c_{1,2\ell}=\tfrac{2\pi \ell}{2\ell+1}$.
\end{theorem}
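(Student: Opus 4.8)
\textbf{Strategy.} The plan is to establish matching upper and lower bounds for $c_{1,k}$ using the known homotopy types of $\vr{S^1}{r}$. Recall from~\cite{AA-VRS1} that as $r$ increases through the values $\tfrac{2\pi\ell}{2\ell+1}$, the Vietoris--Rips complex $\vr{S^1}{r}$ passes through the homotopy types $S^1, S^3, S^5, \ldots$; more precisely, $\vr{S^1}{r} \simeq S^{2\ell+1}$ for $\tfrac{2\pi(\ell-1)}{2\ell-1} < r \le \tfrac{2\pi\ell}{2\ell+1}$, with the convention $\diam(\sigma) \le r$. Moreover these homotopy equivalences can be upgraded to $\Z/2$-homotopy equivalences, since the relevant maps (e.g.\ the cyclic maps of~\cite{AA-VRS1} or the retractions) respect the antipodal action; I would cite~\cite{ABF,ABF2} for the equivariant refinement, and work with the metric thickening $\vrm{S^1}{r}$ when convenient, using the $\Z/2$-equivariant interleaving noted in the excerpt to transfer between the two models.

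\textbf{Upper bound $c_{1,k} \le \tfrac{2\pi\ell}{2\ell+1}$ for $k = 2\ell, 2\ell+1$.} For any $\varepsilon > 0$, the complex $\vr{S^1}{\tfrac{2\pi\ell}{2\ell+1}+\varepsilon}$ is $\Z/2$-homotopy equivalent to $S^{2\ell+1}$ with the antipodal action (taking $\varepsilon$ small enough to remain in the relevant window). Hence $\coind(\vr{S^1}{\tfrac{2\pi\ell}{2\ell+1}+\varepsilon}) = 2\ell+1 \ge k$, so there is an odd map $S^k \to \vr{S^1}{\tfrac{2\pi\ell}{2\ell+1}+\varepsilon}$ (compose an odd $S^k \to S^{2\ell+1}$ — which exists since $k \le 2\ell+1$ — with the $\Z/2$-homotopy equivalence). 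Letting $\varepsilon \to 0$ gives $c_{1,k} \le \tfrac{2\pi\ell}{2\ell+1}$.

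\textbf{Lower bound $c_{1,2\ell} \ge \tfrac{2\pi\ell}{2\ell+1}$.} It suffices to show that for $r \le \tfrac{2\pi(\ell-1)}{2\ell-1}$ (i.e.\ strictly below the threshold), there is no odd map $S^{2\ell} \to \vr{S^1}{r}$; equivalently $\coind(\vr{S^1}{r}) < 2\ell$. For such $r$ we have a $\Z/2$-homotopy equivalence $\vr{S^1}{r} \simeq_{\Z/2} S^{2m+1}$ for some $m \le \ell-1$ (or $\vr{S^1}{r}\simeq_{\Z/2} S^1$ when $r$ is small), and since $\Z/2$-homotopy equivalence preserves the coindex, $\coind(\vr{S^1}{r}) = 2m+1 \le 2\ell-1 < 2\ell$. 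Therefore no odd map $S^{2\ell}\to \vr{S^1}{r}$ exists, and combined with the upper bound this forces $c_{1,2\ell} = \tfrac{2\pi\ell}{2\ell+1}$. Since $c_{1,2\ell} \le c_{1,2\ell+1}$ from the inclusion $S^{2\ell}\hookrightarrow S^{2\ell+1}$, and $c_{1,2\ell+1} \le \tfrac{2\pi\ell}{2\ell+1}$ from the upper bound above, we conclude $c_{1,2\ell} = c_{1,2\ell+1} = \tfrac{2\pi\ell}{2\ell+1}$.

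\textbf{Main obstacle.} The crux is the claim that the homotopy equivalences $\vr{S^1}{r} \simeq S^{2\ell+1}$ of Adams--Adamaszek can be made $\Z/2$-equivariant with respect to the antipodal action on the sphere — one must check that the cyclic-polygon model and the maps in~\cite{AA-VRS1} intertwine the antipodal involution on $S^1$ with the standard free involution on the odd sphere $S^{2\ell+1}$, rather than some other involution (the induced action could a priori be exotic). I expect this to follow from the explicit form of those maps, but it is the step requiring genuine care; the alternative is to bound $\coind(\vr{S^1}{r})$ directly via connectivity (using~\eqref{eq:connected-coindex}) and a separate upper-bound argument for the coindex, sidestepping the need to identify the $\Z/2$-action precisely.
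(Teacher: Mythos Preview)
Your overall architecture is sound and you correctly anticipate the key issue, but the route you actually execute has a genuine gap: you rely throughout on a $\Z/2$-homotopy equivalence $\vr{S^1}{r}\simeq_{\Z/2} S^{2\ell+1}$, and this is not what \cite{AA-VRS1}, \cite{ABF}, or \cite{ABF2} provide. Those references give (a) an ordinary homotopy equivalence $\vr{S^1}{r}\simeq S^{2\ell+1}$ and (b) specific odd maps in one direction, but nowhere is it shown that the full homotopy equivalence intertwines the antipodal action on $S^1$ with the standard antipodal action on $S^{2\ell+1}$. Your lower-bound step in particular collapses without this: from $\vr{S^1}{r}\simeq S^{2m+1}$ alone you cannot read off $\coind(\vr{S^1}{r})=2m+1$. (There is also a slip in the range: you need no odd map for all $r<\tfrac{2\pi\ell}{2\ell+1}$, not just for $r\le\tfrac{2\pi(\ell-1)}{2\ell-1}$; and at the critical values $r=\tfrac{2\pi m}{2m+1}$ the homotopy type is a wedge of even spheres, so your case analysis misses those points.)

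The paper takes exactly the alternative you sketch in your final paragraph, and it is worth seeing how cleanly it avoids the equivariance issue. For the upper bound, one uses only that $\vr{S^1}{r}$ is $2\ell$-connected for $r>\tfrac{2\pi\ell}{2\ell+1}$ (immediate from $\vr{S^1}{r}\simeq S^{2\ell+1}$, no equivariance needed) and then invokes~\eqref{eq:connected-coindex} to obtain an odd map $S^{2\ell+1}\to\vr{S^1}{r}$. For the lower bound, Section~5.1 of \cite{ABF} constructs an explicit odd map $\vr{S^1}{r}\to\R^{2\ell}\setminus\{0\}\simeq_{\Z/2}S^{2\ell-1}$ for $r<\tfrac{2\pi\ell}{2\ell+1}$ (via the trigonometric moment map), and then Borsuk--Ulam forbids any odd $S^{2\ell}\to\vr{S^1}{r}$. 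So the paper bounds $\ind$ from above by exhibiting an outgoing odd map, rather than bounding $\coind$ by identifying the $\Z/2$-homotopy type; this is both easier and is what the cited literature actually supplies.
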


\begin{proof}
These values are related to the homotopy types of the simplicial complexes $\vr{S^1}{r}$ and of the metric thickenings $\vrm{S^1}{r}$.
The homotopy types of these simplicial complexes are provided in~\cite{AA-VRS1} as $\vr{S^1}{r}\simeq S^{2\ell+1}$ for $\frac{2\pi \ell}{2\ell+1}<r<\frac{2\pi(\ell+1)}{2\ell+3}$; see Figure~\ref{fig:VRSn}.
The homotopy types of these metric thickenings are proven in~\cite{moy2023vietoris} as $\vrm{S^1}{r}\simeq 
S^{2\ell+1}$ for $\frac{2\pi \ell}{2\ell+1}\le r<\frac{2\pi(\ell+1)}{2\ell+3}$.

For $r>\tfrac{2\pi \ell}{2\ell+1}$, $\vr{S^1}{r}$ is $2\ell$-connected.
We apply \eqref{eq:connected-coindex} in order to obtain an odd map $S^{2\ell+1}\to \vr{S^1}{r}$.
This shows that $c_{1,2\ell}\le c_{1,2\ell+1}\le\tfrac{2\pi \ell}{2\ell+1}$.
On the other hand, Section~5.1 of~\cite{ABF} produces an odd map $\vrm{S^1}{r}\to \R^{2\ell}\setminus\{\vec{0}\}\simeq_{\Z/2} S^{2\ell-1}$ for $r<\frac{2\pi \ell}{2\ell+1}$; the same construction also produces an odd map $\vr{S^1}{r}\to \R^{2\ell}\setminus\{\vec{0}\}$.
Therefore, the Borsuk--Ulam theorem implies there cannot exist odd maps $S^{2\ell}\to\vrm{S^1}{r}$ or $S^{2\ell}\to\vr{S^1}{r}$ for $r<\frac{2\pi \ell}{2\ell+1}$.
This shows $c_{1,2\ell+1}\ge c_{1,2\ell}\ge\tfrac{2\pi \ell}{2\ell+1}$.
Hence $c_{1,2\ell+1}=c_{1,2\ell}=\tfrac{2\pi \ell}{2\ell+1}$, as desired.
\end{proof}


\begin{figure}[h]
\centering
\includegraphics[width=\textwidth]{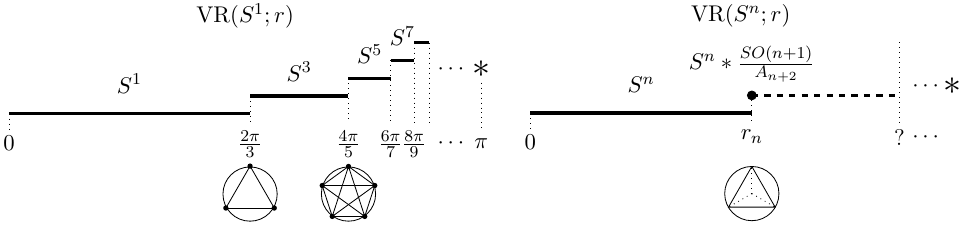}
\caption{The homotopy types of $\vr{S^1}{r}$ and $\vr{S^n}{r}$.}
\label{fig:VRSn}
\end{figure}

\begin{theorem}
\label{thm:c-n-n+1}
For all $n\ge 1$, we have $c_{n,n+2} = c_{n,n+1}=r_n$.
\end{theorem}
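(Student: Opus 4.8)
The plan is to establish the two inequalities $c_{n,n+1}\ge r_n$ and $c_{n,n+2}\le r_n$; combined with the trivial bound $c_{n,n+1}\le c_{n,n+2}$ coming from the odd inclusion $S^{n+1}\hookrightarrow S^{n+2}$, these force $c_{n,n+1}=c_{n,n+2}=r_n$. Note throughout that $r_n=\arccos\!\big(\tfrac{-1}{n+1}\big)\le\tfrac{2\pi}{3}<\pi$, so all Vietoris--Rips complexes appearing below (at scales near $r_n$) are free $\Z/2$ spaces.

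For the lower bound $c_{n,n+1}\ge r_n$, I would fix $r<r_n$ and show there is no odd map $S^{n+1}\to\vr{S^n}{r}$. The key geometric input is that no simplex of $\vr{S^n}{r}$ contains the origin $\vec 0\in\R^{n+1}$ in its convex hull: if $\vec 0\in\mathrm{conv}(\sigma)$ for a simplex $\sigma$ of $\vr{S^n}{r}$, then by Carath\'eodory's theorem $\vec 0=\sum_{j=0}^{p}\mu_j x_j$ for some $x_0,\dots,x_p\in\sigma$ with $p\le n+1$, $\mu_j>0$, and $\sum_j\mu_j=1$. Expanding $0=\big\|\sum_j\mu_j x_j\big\|^2=\sum_j\mu_j^2+\sum_{j\ne j'}\mu_j\mu_{j'}\langle x_j,x_{j'}\rangle$ and using $\langle x_j,x_{j'}\rangle\ge\cos r$ for $j\ne j'$ (which holds since $d_{S^n}(x_j,x_{j'})\le r<\pi$), one gets, with $s\coloneqq\sum_j\mu_j^2$, the inequality $0\ge s+\cos r\,(1-s)$, hence $s\le\frac{-\cos r}{1-\cos r}$. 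The right-hand side is strictly increasing in $r$ on $(0,\pi)$ and equals $\tfrac1{n+2}$ at $r=r_n$, so $s<\tfrac1{n+2}$; but $s\ge\tfrac1{p+1}\ge\tfrac1{n+2}$ by Cauchy--Schwarz, a contradiction. Consequently the map $\psi\colon\vr{S^n}{r}\to S^n$ sending a point $\sum_i\lambda_i x_i$ of the geometric realization to $\frac{\sum_i\lambda_i x_i}{\|\sum_i\lambda_i x_i\|}$ is well-defined, continuous (affine on each simplex, followed by radial projection $\R^{n+1}\setminus\{\vec 0\}\to S^n$), and odd. If an odd map $S^{n+1}\to\vr{S^n}{r}$ existed, composing with $\psi$ would give an odd map $S^{n+1}\to S^n$, contradicting the Borsuk--Ulam theorem (Theorem~\ref{thm:borsuk-ulam-odd}). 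Hence no such $r<r_n$ works, so $c_{n,n+1}\ge r_n$.

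For the upper bound $c_{n,n+2}\le r_n$, I would invoke the known description of the homotopy type of $\vr{S^n}{r}$ just above $r_n$: for $r$ slightly larger than $r_n$ one has $\vr{S^n}{r}\simeq S^n * \big(SO(n+1)/A_{n+2}\big)$ (the homotopy types summarized in Figure~\ref{fig:VRSn}). Since $SO(n+1)$ is path-connected for $n\ge1$, so is $SO(n+1)/A_{n+2}$, and as $S^n$ is $(n-1)$-connected and joins add connectivities, $\vr{S^n}{r}$ is $(n+1)$-connected for such $r$. By~\eqref{eq:connected-coindex} this gives $\coind(\vr{S^n}{r})\ge n+2$, i.e.\ an odd map $S^{n+2}\to\vr{S^n}{r}$. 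Post-composing with the odd simplicial inclusion $\vr{S^n}{r}\hookrightarrow\vr{S^n}{r'}$ extends this to every $r'\ge r$, so $c_{n,n+2}\le r$ for all $r>r_n$ sufficiently close to $r_n$; letting $r\downarrow r_n$ yields $c_{n,n+2}\le r_n$. Combining both inequalities, $r_n\le c_{n,n+1}\le c_{n,n+2}\le r_n$, as desired.

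I expect the main obstacle to be the upper bound, which genuinely relies on the structural results about Vietoris--Rips complexes (or metric thickenings) of spheres at the first homotopy-type transition, in particular the $(n+1)$-connectivity of $\vr{S^n}{r}$ for $r$ just above $r_n$; one must be careful to cite a version of this fact valid for all $n\ge1$ (the $n$-connectivity alone would only recover $c_{n,n+1}=r_n$, not $c_{n,n+2}=r_n$). The lower bound, by contrast, is elementary and self-contained modulo Carath\'eodory's theorem and the Borsuk--Ulam theorem, and it is worth remarking that the same equivariant map $\psi$ shows more generally that $\coind(\vr{S^n}{r})\le n$ for all $r<r_n$.
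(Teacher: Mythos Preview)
Your approach is essentially the same as the paper's: bound $c_{n,n+1}$ from below via an odd radial projection $\vr{S^n}{r}\to\R^{n+1}\setminus\{\vec 0\}\simeq_{\Z/2}S^n$ for $r<r_n$ followed by Borsuk--Ulam, and bound $c_{n,n+2}$ from above via the $(n+1)$-connectivity of $S^n * \big(SO(n+1)/A_{n+2}\big)$. Your lower-bound argument is a clean self-contained reproof of the relevant part of \cite[Proposition~5.3]{AAF}, which the paper simply cites; this is a nice addition.

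One caveat on the upper bound: the homotopy equivalence you invoke is rigorously established in \cite[Theorem~5.4]{AAF} for the \emph{metric thickening} $\vrm{S^n}{r_n}$, not for the simplicial complex $\vr{S^n}{r}$ at $r$ just above $r_n$ (the latter is known for $n=1,2$ but, as the dashed line in Figure~\ref{fig:VRSn} suggests, not for all $n$). The paper closes this gap by appealing to the observation, recorded at the start of Section~\ref{sec:known-values-cnk}, that $c_{n,k}$ is unchanged when defined via metric thickenings, thanks to the $\Z/2$-equivariant $\varepsilon$-interleavings of~\cite{AMMW,MoyMasters}. You should cite the metric-thickening result and this transfer rather than the simplicial-complex statement directly.
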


\begin{proof}
These values follow from knowledge about the homotopy types of $\vrm{S^n}{r}$ and $\vr{S^n}{r}$.
The related results
\cite[Proposition~5.3]{AAF} and~\cite[Corollary~7.1]{lim2020vietoris} show that $\vrm{S^n}{r}\simeq S^n$ and $\vr{S^n}{r}\simeq S^n$ for all $r < r_n$, respectively.
Furthermore,~\cite[Theorem~5.4]{AAF}\footnote{We refer the reader to~\cite[Section~5.1]{AdamsHeimPeterson} for a gentler introduction to this result.}
provides a homotopy equivalence $\vrm{S^n}{r_n}\simeq 
S^n * \tfrac{\so(n+1)}{A_{n+2}}$; see Figure~\ref{fig:VRSn}.
Since $S^n$ is $(n-1)$-connected and $\tfrac{\so(n+1)}{A_{n+2}}$ is 0-connected, their join $S^n * \tfrac{\so(n+1)}{A_{n+2}}$ is $(n+1)$-connected.
This shows that $c_{n,n+1} \le c_{n,n+2} \le r_n$.
On the other hand,~\cite[Proposition~5.3]{AAF} produces an odd map $\vrm{S^n}{r}\to \R^{n+1}\setminus\{\vec{0}\}\simeq_{\Z/2} S^{n}$ for $r<r_n$; the same construction also produces an odd map $\vr{S^n}{r}\to \R^{n+1}\setminus\{\vec{0}\}$.
Therefore, the Borsuk--Ulam theorem implies there cannot exist odd maps $S^{n+1}\to\vrm{S^n}{r}$ or $S^{n+1}\to\vr{S^n}{r}$ for $r<r_n$.
This shows $c_{n,n+2} \ge c_{n,n+1}\ge r_n$.
Hence $c_{n,n+2}=c_{n,n+1}= r_n$, as desired.
\end{proof}

The exact values of $c_{n,k}$ are not known for $n\ge 2$ and $k\ge n+3$, but we will provide some bounds in the remaining theorem and remarks of this section.

We first provide a bound on $c_{n,k}$ in terms of coverings of projective space.
For $X$ a metric space, let $\cov_X(k)$ be the infimum over all $\varepsilon>0$ such that there exists a finite set $A \subseteq X$ of cardinality $|A| \le k$ such that the balls of radius $\varepsilon$ about $A$ cover $X$, i.e.\ such that $A$ is an $\varepsilon$-covering of $X$.
Let $\RP^n$ be the projective space obtained as the quotient $S^n/(x \sim -x)$, and equipped with the quotient metric.
Explicitly, $d_{\RP^n}(\{x,-x\},\{x',-x'\})=\min(d_{S^n}(x,x'),d_{S^n}(x,-x'))$, so $\RP^n$ has diameter $\tfrac{\pi}{2}$.

Adams, Bush, Frick show in~\cite[Theorem~3]{ABF2} that if $\delta \ge \cov_{\RP^n}(k)$, then there is an odd map $\vrm{S^n}{\pi-2\delta} \to S^{k-1}$, so $\coind(\vrm{S^n}{\pi-2\delta}) \le \ind(\vrm{S^n}{\pi-2\delta}) \le k-1$.
In other words, there is no odd map $S^k \to \vrm{S^n}{\pi-2\delta}$ unless $\delta \le \cov_{\RP^n}(k)$.
After we replace $\pi-2\delta$ with $r$, we see that there is no odd map $S^k \to \vrm{S^n}{r}$ unless $r \ge \pi-2\,\cov_{\RP^n}(k)$.
This is the proof of the following theorem, which follows from~\cite[Theorem~3]{ABF2}, and which is tight when both $n=1$ and $k$ is odd.

\begin{theorem}
\label{thm:proj-packings}
For all $k\ge n\ge 1$, we have $c_{n,k} \ge \pi - 2\,\cov_{\RP^n}(k)$.
\end{theorem}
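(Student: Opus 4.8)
The plan is to argue the quantitative direction directly: I will show that if there is an odd map $S^k\to\vrm{S^n}{r}$, then necessarily $r\ge\pi-2\,\cov_{\RP^n}(k)$, which is precisely the claim $c_{n,k}\ge\pi-2\,\cov_{\RP^n}(k)$ (the value of $c_{n,k}$ being the infimum of the set of such $r$, which is nonempty since $\vrm{S^n}{\pi}$ is not free). By the equivalence recorded just above --- the value of $c_{n,k}$ is the same whether computed with the complex $\vr{S^n}{r}$ or with the metric thickening $\vrm{S^n}{r}$, via the $\Z/2$-equivariant $\varepsilon$-interleavings of~\cite{AMMW,MoyMasters} --- it does no harm to phrase everything in terms of metric thickenings, which is the setting of~\cite{ABF2}. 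If $\pi-2\,\cov_{\RP^n}(k)\le 0$ there is nothing to prove, since $c_{n,k}\ge 0$ always, so I will assume $\pi-2\,\cov_{\RP^n}(k)>0$.

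The one substantive ingredient, which I will simply quote rather than reprove, is~\cite[Theorem~3]{ABF2}: whenever $\delta\ge\cov_{\RP^n}(k)$ there exists an odd map $\psi\colon\vrm{S^n}{\pi-2\delta}\to S^{k-1}$. (Morally: lift a $\delta$-covering $\{[a_1],\dots,[a_k]\}$ of $\RP^n$ to the centrally-symmetric set $\{\pm a_1,\dots,\pm a_k\}\subseteq S^n$; for $\mu\in\vrm{S^n}{\pi-2\delta}$ the support $\supp(\mu)$ has diameter strictly less than $\pi-2\delta$ and hence lies strictly inside an open hemisphere, so one can read off from the position of $\supp(\mu)$ relative to the antipodal pairs $\pm a_i$ a nonzero vector in $\R^k$ that reverses sign under $\mu\mapsto-\mu$, and normalizing produces $\psi$.)

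Given this, the argument concludes quickly. Suppose $r\ge 0$ admits an odd map $\phi\colon S^k\to\vrm{S^n}{r}$, and suppose toward a contradiction that $r<\pi-2\,\cov_{\RP^n}(k)$. Put $\delta\coloneqq\tfrac{\pi-r}{2}$, so that $\delta>\cov_{\RP^n}(k)$ and $\pi-2\delta=r$. Then~\cite[Theorem~3]{ABF2} supplies an odd map $\psi\colon\vrm{S^n}{r}\to S^{k-1}$, so $\psi\circ\phi\colon S^k\to S^{k-1}$ is an odd map, contradicting the Borsuk--Ulam theorem (Theorem~\ref{thm:borsuk-ulam-odd}). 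Hence $r\ge\pi-2\,\cov_{\RP^n}(k)$ for every such $r$, and taking the infimum over all such $r$ yields $c_{n,k}\ge\pi-2\,\cov_{\RP^n}(k)$.

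I expect essentially no obstacle in this argument: all of the geometry has been outsourced to~\cite[Theorem~3]{ABF2}, and what remains is a composition of odd maps followed by Borsuk--Ulam, plus the bookkeeping of moving between simplicial complexes and metric thickenings and of setting $\delta=\tfrac{\pi-r}{2}$. If one instead wanted a self-contained proof, the genuinely hard part would be the construction sketched in the parenthetical above: building $\psi$ so that it is continuous, $\Z/2$-equivariant, and \emph{nonzero} before normalization on the \emph{entire} thickening $\vrm{S^n}{\pi-2\delta}$, which is exactly what pins down the scale $\pi-2\delta$ and uses the covering hypothesis.
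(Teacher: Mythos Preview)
Your proposal is correct and is essentially the same argument the paper gives: quote \cite[Theorem~3]{ABF2} to obtain an odd map $\vrm{S^n}{\pi-2\delta}\to S^{k-1}$ for $\delta\ge\cov_{\RP^n}(k)$, then use Borsuk--Ulam to rule out any odd map $S^k\to\vrm{S^n}{r}$ when $r<\pi-2\,\cov_{\RP^n}(k)$. The paper phrases the obstruction via $\coind\le\ind\le k-1$ while you spell out the composition $S^k\to\vrm{S^n}{r}\to S^{k-1}$ explicitly, but these are the same thing. (One harmless slip in your parenthetical sketch: in the paper's convention $\mu\in\vrm{S^n}{r}$ means $\diam(\supp(\mu))\le r$, not strictly less; this does not affect your actual proof, which correctly defers to the cited theorem.)
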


For any $n\ge 1$, we have $\lim_{k\to\infty}2\,\cov_{\RP^n}(k) = 0$.
Therefore, Theorem~\ref{thm:proj-packings} implies that for any $n\ge 1$, we have $\lim_{k\to\infty} c_{n,k} = \pi$.

\begin{corollary}
\label{cor:odd-distortion-limit}
Fix $n\ge 1$.
The distortion of an odd function $f\colon S^k \to S^n$ tends towards its maximum possible value $\pi$ as $k$ goes to infinity.
\end{corollary}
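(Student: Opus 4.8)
The plan is to squeeze the quantity $\inf\{\dis(f)\mid f\colon S^k\to S^n\text{ odd}\}$ between $c_{n,k}$ and $\pi$, and then let $k\to\infty$. First I would record the trivial upper bound: since we equip both $S^k$ and $S^n$ with geodesic metrics taking values in $[0,\pi]$, every function $f\colon S^k\to S^n$ --- continuous or not --- satisfies
\[
\dis(f)=\sup_{x,x'\in S^k}\bigl|d_{S^k}(x,x')-d_{S^n}(f(x),f(x'))\bigr|\le \pi,
\]
because the two distances being subtracted each lie in $[0,\pi]$. Thus $\pi$ is the largest conceivable value of the distortion, which is exactly what ``maximum possible value'' refers to; it remains to show that the infimum over odd $f$ approaches $\pi$.

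Next, the Main Theorem supplies the lower bound $\inf\{\dis(f)\mid f\colon S^k\to S^n\text{ odd}\}\ge c_{n,k}$, and the discussion immediately following Theorem~\ref{thm:proj-packings} shows $\lim_{k\to\infty}c_{n,k}=\pi$: indeed $c_{n,k}\ge \pi-2\,\cov_{\RP^n}(k)$, and $\cov_{\RP^n}(k)\to 0$ as $k\to\infty$ because $\RP^n$ is a compact, hence totally bounded, metric space --- for every $\varepsilon>0$ it admits a finite $\varepsilon$-net, say of cardinality $k_0$, so that $\cov_{\RP^n}(k)\le\varepsilon$ for all $k\ge k_0$.

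Combining these facts, every odd $f\colon S^k\to S^n$ satisfies $c_{n,k}\le\dis(f)\le\pi$, and hence $\pi\ge\inf\{\dis(f)\mid f\colon S^k\to S^n\text{ odd}\}\ge c_{n,k}$; since $c_{n,k}\to\pi$, the squeeze theorem forces the infimum to converge to $\pi$, and indeed the distortion of \emph{any} odd $f\colon S^k\to S^n$ is pinned into the shrinking interval $[c_{n,k},\pi]$ as $k\to\infty$. There is essentially no obstacle here: all of the content lies in the Main Theorem and in Theorem~\ref{thm:proj-packings}, both already established, and the only routine verification needed is the total boundedness of $\RP^n$ used to conclude $\cov_{\RP^n}(k)\to 0$.
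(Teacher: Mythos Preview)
Your argument is correct and follows exactly the approach in the paper: the corollary is stated immediately after the observation that $\lim_{k\to\infty}c_{n,k}=\pi$ (deduced from Theorem~\ref{thm:proj-packings} and $\cov_{\RP^n}(k)\to 0$), and the lower bound $\dis(f)\ge c_{n,k}$ from the Main Theorem then yields the result. You have simply written out the details that the paper leaves implicit.
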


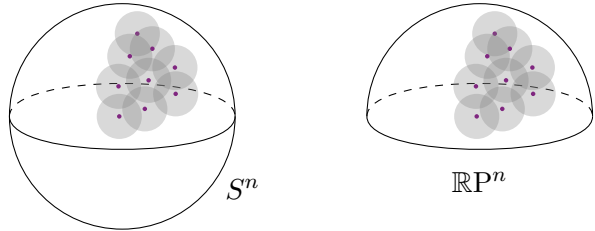
\begin{figure}[htb]
\centering
\begin{tikzcd}
    \begin{tikzpicture}
    \filldraw[fill=none](0,0) circle (1.5);
    \draw [dashed](-1.5,0) to[out=90,in=90,looseness=.5] (1.5,0);
    \draw (1.5,0) to [out=270,in=270,looseness=.5] (-1.5,0);
    \node[right] at (1.2,-1)  {$S^n$};
    \node[fill=violet,circle,inner sep=.6pt] (a) at (.2,1.1) {};
    \fill[gray,opacity=.3](a) circle (.3);
    \node[fill=violet,circle,inner sep=.6pt] (b) at (.4,.9) {};
    \fill[gray,opacity=.3](b) circle (.3);
    \node[fill=violet,circle,inner sep=.6pt] (c) at (.1,.8) {};
    \fill[gray,opacity=.3](c) circle (.3);
    \node[fill=violet,circle,inner sep=.6pt] (d) at (.35,.48) {};
    \fill[gray,opacity=.3](d) circle (.3);
    \node[fill=violet,circle,inner sep=.6pt] (e) at (.7,.65) {};
    \fill[gray,opacity=.3](e) circle (.3);
    \node[fill=violet,circle,inner sep=.6pt] (f) at (.71,.3) {};
    \fill[gray,opacity=.3](f) circle (.3);
    \node[fill=violet,circle,inner sep=.6pt] (g) at (-.05,.4) {};
    \fill[gray,opacity=.3](g) circle (.3);
    \node[fill=violet,circle,inner sep=.6pt] (h) at (.3,.1) {};
    \fill[gray,opacity=.3](h) circle (.3);
    \node[fill=violet,circle,inner sep=.6pt] (i) at (-.04,0) {};
    \fill[gray,opacity=.3](i) circle (.3);
	\end{tikzpicture}
	&
    \begin{tikzpicture}
      \draw (1.5,0) arc(0:180:1.5);
    \draw [dashed](-1.5,0) to[out=90,in=90,looseness=.5] (1.5,0);
    \draw (1.5,0) to [out=270,in=270,looseness=.5] (-1.5,0);
    \node at (0,-1)  {$\RP^n$};
    \node[fill=violet,circle,inner sep=.6pt] (a) at (.2,1.1) {};
    \fill[gray,opacity=.3](a) circle (.3);
    \node[fill=violet,circle,inner sep=.6pt] (b) at (.4,.9) {};
    \fill[gray,opacity=.3](b) circle (.3);
    \node[fill=violet,circle,inner sep=.6pt] (c) at (.1,.8) {};
    \fill[gray,opacity=.3](c) circle (.3);
    \node[fill=violet,circle,inner sep=.6pt] (d) at (.35,.48) {};
    \fill[gray,opacity=.3](d) circle (.3);
    \node[fill=violet,circle,inner sep=.6pt] (e) at (.7,.65) {};
    \fill[gray,opacity=.3](e) circle (.3);
    \node[fill=violet,circle,inner sep=.6pt] (f) at (.71,.3) {};
    \fill[gray,opacity=.3](f) circle (.3);
    \node[fill=violet,circle,inner sep=.6pt] (g) at (-.05,.4) {};
    \fill[gray,opacity=.3](g) circle (.3);
    \node[fill=violet,circle,inner sep=.6pt] (h) at (.3,.1) {};
    \fill[gray,opacity=.3](h) circle (.3);
    \node[fill=violet,circle,inner sep=.6pt] (i) at (-.04,0) {};
    \fill[gray,opacity=.3](i) circle (.3);
	\end{tikzpicture}
	\end{tikzcd}
\caption{(Incomplete) covers of $S^n$ and $\RP^n$.}
\label{fig:cover}
\end{figure}

\begin{remark}
\label{rem:improvement}
Our Main Theorem either recovers or improves upon the best previously known lower bounds on Gromov--Hausdorff distances between spheres, namely~\cite{lim2023gromov}, which proves $2\cdot d_{\gh}(S^n, S^k) \ge \max\{r_n,\pi - 2\,\cov_{S^n}(k+1)\}$ for $k>n$.
Recall that our Main Theorem states that $2\cdot d_{\gh}(S^n, S^k) \geq c_{n,k}$.
To recover the first term $r_n$ from this maximum, use Theorem~\ref{thm:c-n-n+1} and note that $c_{n,k}\ge c_{n,n+1}=r_n$ for $k>n$.
To improve upon the second term $\pi - 2\,\cov_{S^n}(k+1)$ from this maximum, note that $c_{n,k} \ge \pi-2\,\cov_{\RP^n}(k)$ by Theorem~\ref{thm:proj-packings}, and that it is easier to cover the quotient space $\RP^n$ than it is to cover the sphere $S^n$, since distances can only decrease upon taking quotients; see Figure~\ref{fig:cover}.
Furthermore, the $r_n$ and $\pi - 2\,\cov_{S^n}(k+1)$ lower bounds in~\cite{lim2023gromov} are proven using two separate arguments, which are now unified, generalized, and improved upon by our single lower bound $c_{n,k}$.
One specific instance of improvement is $n=1$, when we obtain $2\cdot d_{\gh}(S^1, S^{2\ell}) \ge c_{1,2\ell} = \tfrac{2\pi \ell}{2\ell+1}$ and $2\cdot d_{\gh}(S^1, S^{2\ell+1}) \ge c_{1,2\ell+1} = \tfrac{2\pi \ell}{2\ell+1}$; note $c_{1,k} > r_1$ for $k\ge 4$.
\end{remark}

\begin{remark}
Theorem~2 of~\cite{ABF2} gives an upper bound on the values of $c_{n,k}$ in terms of packings of points in $\RP^n$, and in particular implies that $c_{n,k}<\pi$ for all $k\ge n$.
\end{remark}

\begin{remark}
\label{rem:c-2-7}
The following calculation further illustrates  that the elucidation of the subsequent homotopy types of Vietoris-Rips complexes of spheres can help estimate the numbers $c_{n,k}$.
Partial results for the case  of $S^2$ can be obtained thanks to early work by Katz.
Indeed, by~\cite[
Corollary 7]{lim2020vietoris} and~\cite{katz1989diameter,katz1991neighborhoods}, we know that $\vr{S^2}{r}\simeq S^2 * \tfrac{S^3}{E_6} = S^2 * \tfrac{\so(3)}{A_4}$ for all $r_2 < r < \arccos\left(\tfrac{-1}{\sqrt{5}}\right)$.
Since $S^2 * \tfrac{\so(3)}{A_4}$ is 6-dimensional with a free $\Z/2$ action, 
there is no odd map $S^7 \to S^2 * \tfrac{\so(3)}{A_4}$, and therefore we can conclude that $c_{2,7}\geq \arccos\left(\tfrac{-1}{\sqrt{5}}\right)$.
It is currently open whether the same lower bound holds for $c_{2,6}$ or $c_{2,5}$.
\end{remark}

\begin{remark}
\label{rem:icosahedron}
The 12 vertices of a regular icosahedron inscribed in $S^2$ can be chosen to be $\tfrac{1}{\sqrt{1+\phi^2}}(0,\pm 1,\pm\phi)$, $\tfrac{1}{\sqrt{1+\phi^2}}(\pm 1,\pm\phi,0)$, and $\tfrac{1}{\sqrt{1+\phi^2}}(\pm\phi,0,\pm 1)$, where $\phi\coloneqq\tfrac{\sqrt{5}+1}{2}$ is the golden ratio.
The three vertices $\tfrac{1}{\sqrt{1+\phi^2}}(1,\phi,0)$, $\tfrac{1}{\sqrt{1+\phi^2}}(\phi,0,1)$, and $\tfrac{1}{\sqrt{1+\phi^2}}(\phi,0,-1)$ form a face, and since the geodesic distance between the center of this triangle and one of the vertices is $\arccos\left(\sqrt{\tfrac{5+2\sqrt{5}}{15}}\right)$, we can conclude that $\cov_{S^2}(12)\leq \arccos\left(\sqrt{\tfrac{5+2\sqrt{5}}{15}}\right)$.
Since this set of 12 points in $S^2$ is centrally-symmetric, it produces a set of 6 points in $\RP^2$ showing that $\cov_{\RP^2}(6)\leq \arccos\left(\sqrt{\tfrac{5+2\sqrt{5}}{15}}\right)$.
By our Main Theorem and Theorem~\ref{thm:proj-packings}, we achieve
\[2\cdot d_{\gh}(S^2, S^k)\geq c_{2,k} \ge c_{2,6} \geq\pi-2\,\cov_{\RP^2}(6)\geq\pi-2\,\arccos\left(\sqrt{\tfrac{5+2\sqrt{5}}{15}}\right) \quad \text{for }k\ge 6.\]
This holds for more values of $k$ than~\cite[Proposition~1.11]{lim2023gromov}, which only gives
\[ 2 d_{\gh}(S^2, S^k) \ge \pi - 2\,\cov_{S^2}(k+1) \ge \pi - 2\,\cov_{S^2}(12) \geq\pi-2\,\arccos\left(\sqrt{\tfrac{5+2\sqrt{5}}{15}}\right) \text{for }k\ge 11.\]
See also~\cite[Example~4.5]{ABF2}.
\end{remark}

\begin{remark}
\label{rem:600-cell}
The 600-cell is a convex regular 4-polytope with 600 tetrahedral cells and 120 antipode-preserving vertices in $S^3$.
One can choose those 120 vertices in the following way: 8 vertices obtained from $(0,0,0,\pm 1)$ by permuting coordinates, 16 vertices of the form $\left(\pm\tfrac{1}{2},\pm\tfrac{1}{2},\pm\tfrac{1}{2},\pm\tfrac{1}{2}\right)$, and the remaining 96 vertices are obtained by taking even permutations of $\left(\pm\tfrac{\phi}{2},\pm\tfrac{1}{2},\pm\tfrac{\phi^{-1}}{2},0\right)$, where $\phi\coloneqq\tfrac{\sqrt{5}+1}{2}$ is the golden ratio.
The Euclidean distance between the two closest vertices is $\phi^{-1}$, and hence the geodesic distance between them is $\tfrac{\pi}{5}$.
By direct computation, the four vertices $(1,0,0,0)$, $\left(\tfrac{\phi}{2},\tfrac{1}{2},\tfrac{\phi^{-1}}{2},0\right)$, $\left(\tfrac{\phi}{2},\tfrac{1}{2},-\tfrac{\phi^{-1}}{2},0\right)$, $\left(\tfrac{\phi}{2},\tfrac{\phi^{-1}}{2},0,\tfrac{1}{2}\right)$ form a tetrahedral cell.
Since the geodesic distance between the center of this cell and one of the vertices is $\arccos\left(\tfrac{1+\sqrt{5}}{2\sqrt{3}}\right)$, we can conclude that $\cov_{S^3}(120)\leq \arccos\left(\tfrac{1+\sqrt{5}}{2\sqrt{3}}\right)$.
This implies that $\cov_{\RP^3}(60)\leq \arccos\left(\tfrac{1+\sqrt{5}}{2\sqrt{3}}\right)$.
Finally, from our Main Theorem and Theorem~\ref{thm:proj-packings}, we obtain
\[2\cdot d_{\gh}(S^3, S^{60})\geq c_{3,60}\geq\pi-2\,\cov_{\RP^3}(60)\geq\pi-2\,\arccos\left(\tfrac{1+\sqrt{5}}{2\sqrt{3}}\right).\]
See~\cite[Remark~4.2]{ABF2}.
\end{remark}

\section{A novel upper bound on the Gromov--Hausdorff distance $d_{\gh}(S^n, S^{n+1})$}
\label{sec:super-diag-upper-bound}

We will give a new upper bound on $2\cdot d_\gh(S^n, S^{n+1})$, improving the existing bounds for all $n > 3$.
In particular, we will prove the following theorem.

\begin{theorem-super-diag}
For every $n\ge 1$, we have $2\cdot d_\gh(S^n, S^{n+1}) \le \frac{2\pi}{3}$.
\end{theorem-super-diag}

We first introduce several geometric objects, and recall the current best upper bounds.
For all $n\ge 1$, we may inscribe a regular $(n+1)$-simplex in $S^n$.
Any pair of vertices of the inscribed simplex lie the same geodesic distance apart, and this distance is exactly the quantity
$r_n = \arccos\left(-\frac{1}{n+1}\right)$.
The facets of the inscribed simplex may be projected radially outward, obtaining $(n+2)$ sets that cover $S^n$, and which are additionally closed, geodesically convex, and pairwise isometric.
We call these radially projected facets \emph{regular geodesic simplices} in $S^n$.
Santal\'o~\cite{santalo1946convexregions} computed the diameter of these simplices, which is \[
t_n \coloneqq \begin{cases}
\arccos\left(-\frac{n+1}{n+3}\right)&\text{for $n$ odd},\\
\arccos\left(-\sqrt{\frac{n}{n+4}}\right)& \text{for $n$ even}.
\end{cases}
\]
This diameter is achieved between points at the centers of opposite faces which each contain half the vertices of the simplex (rounded appropriately when there are an odd number of vertices).
Notice that $r_n \le t_n$ for every $n$.
In fact, equality holds only for $n=1$, when $r_1=\tfrac{2\pi}{3}=t_1$.
As $n\to \infty$ we have $r_n\to \tfrac{\pi}{2}$ and $t_n\to \pi$.
In particular, $t_n > \tfrac{2\pi}{3}$ for all $n\ge 2$.

The quantities $r_n$ and $t_n$ played an important role in the work of Lim, Memoli, and Smith~\cite{lim2023gromov}, who showed that $r_n \le 2\cdot d_\gh(S^n, S^{n+1}) \le t_n$.
They also obtained exact results for small $n$, showing that $2\cdot d_\gh(S^1,S^2) = \tfrac{2\pi}{3}= 2\cdot d_\gh(S^1, S^3)$ and $2\cdot d_\gh(S^2,S^3) = r_2$.
Theorem~\ref{thm:super-diag-upper-bound} improves the upper bound on $2\cdot d_\gh(S^n,S^{n+1})$ for all $n>3$, and also improves the upper bound asymptotically---the previous bound converged to $\pi$, while Theorem~\ref{thm:super-diag-upper-bound} bounds it strictly away from $\pi$.

To build towards a proof of Theorem~\ref{thm:super-diag-upper-bound}, we first make a simple observation regarding the distortion of relations which only pair together points that lie a bounded distance from one another.
\begin{lemma}
\label{lem:bounded-stretch}
Let $(X, d_X)$ be a metric space, and let $Y\subseteq X$ be a subspace with induced metric $d_Y$.
Let $R\subseteq X\times Y$ be any relation, and define
\[\varepsilon \coloneqq \sup\{d_X(x,y)\mid (x,y)\in R\}.\]
Then the distortion of $R$ is at most $2\varepsilon$.
\end{lemma}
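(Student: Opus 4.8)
The plan is to bound the distortion $\dis(R) = \sup\{|d_X(x,x') - d_Y(y,y')| \mid (x,y),(x',y')\in R\}$ directly using the triangle inequality, exploiting that each pair $(x,y)\in R$ has $d_X(x,y)\le\varepsilon$ and that $d_Y$ is just the restriction of $d_X$ to $Y$. Fix two pairs $(x,y),(x',y')\in R$. Since $Y\subseteq X$ carries the induced metric, $d_Y(y,y') = d_X(y,y')$, so it suffices to estimate $|d_X(x,x') - d_X(y,y')|$ purely inside $(X,d_X)$.

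The key step is a double application of the triangle inequality. First I would write $d_X(x,x') \le d_X(x,y) + d_X(y,y') + d_X(y',x') \le d_X(y,y') + 2\varepsilon$, using $d_X(x,y)\le\varepsilon$ and $d_X(x',y')\le\varepsilon$. Symmetrically, $d_X(y,y') \le d_X(y,x) + d_X(x,x') + d_X(x',y') \le d_X(x,x') + 2\varepsilon$. Combining these two inequalities gives $|d_X(x,x') - d_X(y,y')| \le 2\varepsilon$. Taking the supremum over all pairs of elements of $R$ yields $\dis(R)\le 2\varepsilon$, as claimed.

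I do not expect any real obstacle here — the statement is a one-line consequence of the triangle inequality. The only point requiring a word of care is the edge case where $\varepsilon = \sup\{d_X(x,y)\mid(x,y)\in R\}$ might a priori be infinite, but since $X$ is implicitly a (bounded) metric space in the ambient setup, or in any case $\dis(R)$ is compared against it, the bound $\dis(R)\le 2\varepsilon$ holds trivially when $\varepsilon=\infty$; and if $R=\varnothing$ there is nothing to prove. So the entire argument is the two triangle-inequality estimates above followed by taking a supremum.
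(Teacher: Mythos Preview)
Your proof is correct and follows essentially the same approach as the paper: fix two pairs in $R$, apply the triangle inequality twice to bound $d_X(x,x') - d_Y(y,y')$ by $2\varepsilon$, then argue symmetrically for the reverse inequality. The paper's argument is line-for-line the same (without the edge-case remarks about $\varepsilon=\infty$ or $R=\varnothing$, which are harmless additions on your part).
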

\begin{proof}
Let $(x,y)$ and $(x',y')$ be in $R$.
We wish to bound $|d_X(x,x') - d_Y(y,y')|$.
Applying the triangle inequality twice, we see that
\[
d_X(x,x') \le d_X(x,y) + d_X(y,y') + d_X(y', x') \le 2\varepsilon + d_X(y,y') = 2\varepsilon + d_Y(y,y').
\]
Hence $d_X(x,x')-d_Y(y,y')\le 2\varepsilon$.
A symmetric application of the triangle inequality shows that $d_Y(y,y') - d_X(x,x') \le 2\varepsilon$.
Together these inequalities imply the desired bound.
\end{proof}

Recall that $H_\ge(S^{n+1})$ denotes the closed upper hemisphere of $S^{n+1}$.
Let $N\in H_\ge(S^{n+1})$ denote the north pole.
We will make use of the map $\tau \colon H_\ge(S^{n+1})\setminus\{N\}\to S^n$ which sends a point in the upper hemisphere to the unique nearest point on the equator.
In other words, for $x\in H_{\ge}(S^{n+1})\setminus \{N\}$, we define $\tau(x)$ to be the result of setting the final coordinate in $x$ to zero, and then normalizing.

In the proof of Theorem~\ref{thm:super-diag-upper-bound} below, we require two important facts.
The most crucial is that to bound $d_\gh(S^n, S^{n+1})$ it suffices to bound the distortion of correspondences between the upper hemisphere $H_\ge(S^{n+1})$ and the equator $S^n$ (see~\cite[Lemma~5.5]{lim2023gromov}).
Second, if $x\neq N$ and $x'$ are points in $H_{\ge}(S^{n+1})$ and $d_{S^{n+1}}(x,x')\ge\tfrac{\pi}{2}$, then $d_{S^{n+1}}(\tau(x), x')\ge d_{S^{n+1}}(x,x')$.
Indeed, $d_{S^{n+1}}(x,x') \ge \tfrac{\pi}{2}$ if and only if $\langle x,x'\rangle \le 0$, and since both $x$ and $x'$ have nonnegative last coordinate we see that $\langle \tau(x), x'\rangle \le \langle x,x'\rangle$, which implies that $d_{S^{n+1}}(\tau(x), x') \ge d_{S^{n+1}}(x,x')$.

\begin{proof}[Proof of Theorem~\ref{thm:super-diag-upper-bound}]
We first construct a correspondence between $S^n$ and $S^{n+1}$, and then we bound its distortion.

\begin{figure}[h]
\centering
\includegraphics[width=2.5in]{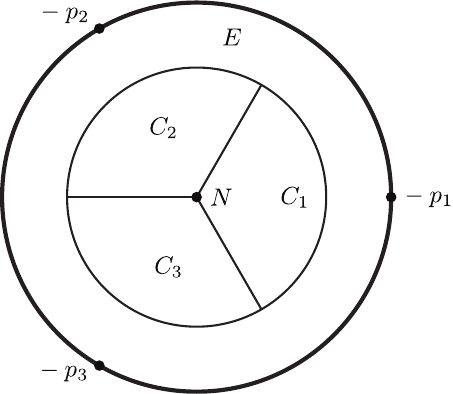}
\caption{The decomposition of $H_\ge(S^{n+1})$ used in the proof of Theorem~\ref{thm:super-diag-upper-bound}.}
\label{fig:hemisphere_decomposition}
\end{figure}

\vspace{1mm}\noindent\textbf{Constructing the correspondence.}
Let $P = \{p_1, p_2, \ldots, p_{n+2}\}$ be the vertices of an inscribed regular $(n+1)$-simplex in $S^n$.
For each $i\in[n+2]\coloneqq\{1,2,\ldots,n+2\}$, let $F_i$ be the geodesic convex hull of $P\setminus \{p_i\}$.
So, for $i\in[n+2]$ the set $F_i$ is a regular geodesic simplex in $S^n$, and its barycenter is $-p_i$.
Define
\[
E \coloneqq \{p\in H_{\ge}(S^{n+1}) \mid d_{S^{n+1}}(p, N) > \tfrac{\pi}{3}\}.
\]
Further, for $i\in[n+2]$ define
\[
C_i \coloneqq \{p\in H_{\ge}(S^{n+1}) \mid p\neq N,\ \tau(p) \in F_i, \text{ and } d_{S^{n+1}}(p,N) \le \tfrac{\pi}{3}\} \cup \{N\}.
\]
So, $E$ is a ``thickened equator'' consisting of the points with distance less than $\tfrac{\pi}{6}$ to the equator, and the various $C_i$ are cones with apex $N$ over the various $F_i$, restricted to a closed ball of radius $\tfrac{\pi}{3}$ around $N$; see Figure~\ref{fig:hemisphere_decomposition}.
Finally, we define a correspondence $R$ between $H_\ge(S^{n+1})$ and $S^n$ as follows:
\[
R \coloneqq  \{(p, \tau(p)) \mid p\in E\} \sqcup \{ (p, -p_i) \mid p\in C_i \text{ for some } i\in[n+2]\}.
\]
Note that this is a correspondence since $E$ and the various $C_i$ cover $H_{\ge}(S^{n+1})$, and since $(p,p)\in R$ for every $p\in S^n$.

\vspace{1mm}\noindent\noindent\textbf{Bounding the distortion.}
We will argue that the distortion of $R$ is at most $\tfrac{2\pi}{3}$.
To this end, let $(x,y)$ and $(x',y')$ be elements of $R$.
To bound $|d_{S^{n+1}}(x,x') - d_{S^{n}}(y,y')|$ we consider the following cases.\smallskip

\noindent \emph{Case 1: Both $x$ and $x'$ lie in $E$.}
By Lemma~\ref{lem:bounded-stretch}, the relation between $E$ and $S^n$ consisting of pairs $(x,\tau(x))$ has distortion at most $\tfrac{\pi}{3}$.
Here we have $y = \tau(x)$ and $y' = \tau(x')$, so $|d_{S^{n+1}}(x,x') - d_{S^{n}}(y,y')|$ is at most $\tfrac{\pi}{3}$.
\smallskip

\noindent \emph{Case 2: Neither $x$ nor $x'$ lie in $E$.}
Here we must have $d_{S^{n+1}}(x, N)\le \tfrac{\pi}{3}$ and $d_{S^{n+1}}(x', N)\le \tfrac{\pi}{3}$.
Hence $d_{S^{n+1}}(x,x')\le \tfrac{2\pi}{3}$.
Moreover, $y$ and $y'$ both lie in $P$, so $d_{S^{n}}(y,y')\le r_n \le \tfrac{2\pi}{3}$.
Thus we have $|d_{S^{n+1}}(x,x') - d_{S^{n}}(y,y')| \le \max\{d_{S^{n+1}}(x,x'), d_{S^{n}}(y,y')\} \le \tfrac{2\pi}{3}$.
\smallskip

\noindent \emph{Case 3: $x\in E, x'\notin E$, and $d_{S^{n+1}}(x,x')\le \tfrac{\pi}{2}$.}
Since $d_{S^{n+1}}(x,x')\le \tfrac{\pi}{2}$, it will suffice to show that $d_{S^{n}}(y,y')-d_{S^{n+1}}(x,x')\le \tfrac{2\pi}{3}$.
Observe that $y = \tau(x)$, so $d_{S^{n+1}}(x,y)\le \tfrac{\pi}{6}$.
Moreover, for some $i\in[n+2]$ we have $x'\in C_i$ and $y'=-p_i$.
Every point in $C_i$ has nonnegative inner product with $-p_i$, so $d_{S^{n+1}}(x',y')\le \tfrac{\pi}{2}$.
Applying the triangle inequality twice, we obtain 
\begin{align*}
    d_{S^{n}}(y,y') &\le d_{S^{n+1}}(y,x) + d_{S^{n+1}}(x,x') + d_{S^{n+1}}(x',y')\\
    &\le \tfrac{\pi}{6} + d_{S^{n+1}}(x,x') + \tfrac{\pi}{2} .
\end{align*}
Hence $d_{S^{n}}(y,y')-d_{S^{n+1}}(x,x')\le \tfrac{2\pi}{3}$ as desired.\smallskip

\noindent \emph{Case 4: $x\in E, x'\notin E$, and $d_{S^{n+1}}(x,x') > \tfrac{\pi}{2}$.}
Since $d_{S^{n+1}}(x,x') > \tfrac{\pi}{2}$, it will suffice to show that $d_{S^{n+1}}(x,x') - d_{S^{n}}(y,y') \le \tfrac{2\pi}{3}$.
We have $y= \tau(x)$, and since $d_{S^{n+1}}(x,x')> \tfrac{\pi}{2}$ this implies that $d_{S^{n+1}}(x,x') \le d_{S^{n+1}}(y,x') \le d_{S^{n+1}}(y,y') + d_{S^{n+1}}(x',y')$.
Consequently, $d_{S^{n+1}}(x,x') - d_{S^{n}}(y,y') \le d_{S^{n+1}}(x',y')$.
Our analysis in the previous case showed that $d_{S^{n+1}}(x',y')\le \tfrac{\pi}{2}$, so the result follows.
\end{proof}

\section{Generalized Dubins--Schwarz inequality}
\label{sec:gen-ds}

The Borsuk--Ulam theorem states that an odd function $S^{n+1} \to S^n$ is discontinuous, but how discontinuous must it be?
One possible quantitative answer is in terms of the \emph{modulus of discontinuity} of a function, which is positive if and only if the function is discontinuous.
Initial results in this direction are given by Dubins and Schwarz in~\cite[Corollary~3]{dubins1981equidiscontinuity}:
The modulus of discontinuity of an odd function $S^{n+1}\to S^n$ is bounded from below by $r_n$, where $r_n\coloneqq \arccos\left(\tfrac{-1}{n+1}\right)$ is the (geodesic) distance between two vertices of the regular $(n+1)$-simplex inscribed in $S^n$.
More generally, for any $k>n$ the modulus of discontinuity and distortion of an odd function $S^k \to S^n$ is at least $r_n$; this follows from the prior facts after pre-composing the odd function $S^k \to S^n$ with an inclusion $S^{n+1}\hookrightarrow S^k$.
However, this lower bound $r_n$ does not depend on $k$.
In this section, we provide improved lower bounds on the modulus of discontinuity of an odd function $S^k \to S^n$, which are weakly increasing and not constant as $k$ increases.

Let $X$ be a topological space, let $Y$ be a metric space, and let $f\colon X\to Y$ be a function.
Then as defined in~\cite{dubins1981equidiscontinuity}, the \emph{modulus of discontinuity of $f$} is
\[
\delta(f)\coloneqq \inf\{\delta\geq 0 \mid \forall x\in X,\ \exists~\text{an open neighborhood } U_x \text{ of } x \text{ s.t.\ } \diam(f(U_x))\leq \delta\}.
\]
Note that $f$ is discontinuous if and only if $\delta(f)>0$.
Restating a result from Dubins and Schwarz~\cite{dubins1981equidiscontinuity} with the geodesic metric instead of the Euclidean metric, we obtain the following:

\begin{theorem}[Dubins--Schwarz inequality; Corollary~3 and Scholium~1 of~\cite{dubins1981equidiscontinuity}]
\label{thm:ds-inequality}
Any odd function $f\colon S^{n+1} \to S^n$ has modulus of discontinuity $\delta(f)\geq r_n$, and this bound is attained.
\end{theorem}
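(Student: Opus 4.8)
The plan is to prove the statement in two halves: the lower bound $\delta(f)\ge c_{n,k}$ for every odd $f$, and a construction that, for each $\varepsilon>0$, exhibits an odd function $S^k\to S^n$ whose modulus of discontinuity is at most $c_{n,k}+\varepsilon$; together these give tightness. For the lower bound I would mimic the proof of the Main Theorem, but replace the global ``distortion'' estimate by a Lebesgue-number argument that uses only the \emph{local} control supplied by the modulus of discontinuity. Since $c_{n,k}\le\pi$ always, we may assume $\delta(f)<\pi$ and fix $\delta'$ with $\delta(f)<\delta'<\pi$. For each $x\in S^k$ pick an open neighborhood $U_x$ of $x$ with $\diam(f(U_x))\le\delta'$, and set $U_x':=U_x\cap(-U_{-x})$, so that $\{U_x'\}_{x\in S^k}$ is a $\Z/2$-invariant open cover of the compact space $S^k$ with $\diam(f(U_x'))\le\delta'$ for all $x$; let $\lambda>0$ be a Lebesgue number of this cover. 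Now choose a finite, centrally-symmetric $\tfrac{\eta}{2}$-covering $X\subset S^k$ with $0<\eta<\min\{\lambda,\pi\}$.

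Every simplex of $\vr{X}{\eta}$ is a subset of $X$ of diameter at most $\eta<\lambda$, hence is contained in some $U_x'$, so its image under $f$ has diameter at most $\delta'$. Thus $f|_X$ induces a simplicial (hence continuous) map $\vr{X}{\eta}\to\vr{S^n}{\delta'}$, which is odd because $X=-X$ (the verification is exactly \eqref{eq:f-odd}). Composing with the odd map $S^k\to\vr{X}{\eta}$ from Lemma~\ref{lem:covering} produces an odd map $S^k\to\vr{S^n}{\delta'}$, whence $c_{n,k}\le\delta'$. Letting $\delta'\to\delta(f)^+$ yields $\delta(f)\ge c_{n,k}$.

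For tightness, fix $\varepsilon>0$ and choose $r'$ with $c_{n,k}<r'<\min\{c_{n,k}+\varepsilon,\pi\}$; by the definition of $c_{n,k}$ together with the inclusions $\vr{S^n}{r}\hookrightarrow\vr{S^n}{r'}$ for $r\le r'$, there is a continuous odd map $g\colon S^k\to\vr{S^n}{r'}$. I would convert $g$ into a function $h\colon S^k\to S^n$ by a ``dominant vertex'' rounding: writing $g(z)$ in barycentric coordinates over the vertices of its carrier simplex $\sigma_z$, let $h(z)$ be a vertex of $\sigma_z$ with the largest coordinate, breaking ties by choosing the vertex whose image in $\RP^n$ is smallest for a fixed well-ordering of $\RP^n$ (this is unambiguous because a simplex of $\vr{S^n}{r'}$ with $r'<\pi$ has no antipodal pair, so its vertices project to distinct points of $\RP^n$). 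Using $g(-z)=-g(z)$ and the fact that $y$ and $-y$ have the same image in $\RP^n$, one checks $h(-z)=-h(z)$. To bound $\delta(h)$: fix $z$, let $M>0$ be the largest barycentric coordinate of $g(z)$, and let $A\subseteq\sigma_z$ be the set of vertices of $\sigma_z$ attaining it. Since the barycentric-coordinate functions on $\vr{S^n}{r'}$ are continuous and $g$ is continuous, there is an open neighborhood $V$ of $z$ on which the carrier of $g(z')$ contains $\sigma_z$ as a face and on which every vertex outside $A$ has barycentric coordinate below some fixed threshold strictly less than $M$; hence the maximum coordinate of $g(z')$ is attained only at vertices in $A$, so $h(V)\subseteq A$. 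As $A$ lies in a single simplex of $\vr{S^n}{r'}$, we get $\diam(h(V))\le r'$, so $\delta(h)\le r'<c_{n,k}+\varepsilon$. Combined with the lower bound, $\delta(h)\in[c_{n,k},c_{n,k}+\varepsilon]$.

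The main obstacle is the tightness half. A naïve rounding of $g$ to an arbitrary vertex of the carrier controls $\diam(h(V))$ only by $2r'$, since near $z$ the carrier ranges over all cofaces of $\sigma_z$, whose combined vertex set need not have diameter $\le r'$; the point of rounding to the \emph{dominant} vertex is that this choice is locally stable --- near $z$ it can only be one of the top-coordinate vertices of the single simplex $\sigma_z$ --- which keeps the local image inside one simplex and removes the spurious factor of $2$. Verifying this local stability, i.e.\ the behavior of barycentric coordinates as $g(z')$ passes between $\sigma_z$ and its cofaces, is the one step that requires genuine care.
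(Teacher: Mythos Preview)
Your proof is correct and follows essentially the same route as the paper's proof of the generalized version (Theorems~\ref{thm:odd-modulus-discontinuity-bound} and~\ref{thm:odd-modulus-discontinuity-bound-tight}): your Lebesgue-number argument for the lower bound is equivalent to the finite-subcover argument in Lemma~\ref{lem:ind-map-modulus-discontinuity}, and your dominant-vertex rounding for tightness is exactly the construction of Lemma~\ref{lem:VR-to-mod} rephrased without barycentric-subdivision language---the paper itself remarks that $\sigma_0$ is precisely the set of vertices attaining the maximal barycentric coordinate, and your $\RP^n$-well-ordering tie-break plays the role of the odd choice function~$v$. The step you flag as delicate (bounding coordinates of vertices outside $\sigma_z$) is handled in the paper by working with open stars in $\sd(\vr{S^n}{r'})$, but your direct argument also goes through once one invokes $\sum_v\lambda_v=1$ to uniformly control the infinitely many new vertices.
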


\noindent Thus, we recover not only the Borsuk--Ulam theorem stating that $f$ is discontinuous, but furthermore we obtain a quantitative bound on how discontinuous $f$ must be.

In this section, we generalize the Dubins--Schwarz inequality for odd functions $S^k \to S^n$ for $k\ge n$.
Our primary theorem in this section is the following.

\begin{theorem-odd-modulus-discontinuity-bound}[Generalized Dubins--Schwarz inequality]
Any odd function $f\colon S^k \to S^n$ with $k \ge n$ has modulus of discontinuity $\delta(f)\geq c_{n,k}$.
\end{theorem-odd-modulus-discontinuity-bound}

This theorem generalizes~\cite[Corollary~3]{dubins1981equidiscontinuity} since $c_{n,n+1}=r_n$.
Since Theorem~\ref{thm:proj-packings} implies $\lim_{k\to\infty} c_{n,k} = \pi$, we obtain the following corollary.

\begin{corollary}
\label{cor:odd-modulus-discontinuity-limit}
Fix $n\ge 1$.
The modulus of discontinuity of an odd function $f\colon S^k \to S^n$ tends towards its maximum possible value $\pi$ as $k$ goes to infinity.
\end{corollary}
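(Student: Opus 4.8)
The plan is to deduce this corollary formally from the Generalized Dubins--Schwarz inequality (Theorem~\ref{thm:odd-modulus-discontinuity-bound}) together with the projective covering bound (Theorem~\ref{thm:proj-packings}), after recording two elementary facts: that $\delta(f)\le\pi$ for every function into $S^n$, and that $\cov_{\RP^n}(k)\to 0$ as $k\to\infty$.

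First I would observe the upper bound. Every subset of $S^n$ has diameter at most $\diam(S^n)=\pi$, so for any function $f\colon S^k\to S^n$ and any open set $U$ we have $\diam(f(U))\le\pi$; hence $\delta(f)\le\pi$ directly from the definition of the modulus of discontinuity. Thus $\pi$ really is the maximal possible value, and it remains to force $\delta(f)$ arbitrarily close to $\pi$ for $k$ large, uniformly over all odd functions $f\colon S^k\to S^n$.

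Next I would record that $\lim_{k\to\infty}\cov_{\RP^n}(k)=0$: since $\RP^n$ is a compact, hence totally bounded, metric space, for every $\eta>0$ it has a finite $\eta$-net, say of cardinality $N(\eta)$, and therefore $\cov_{\RP^n}(k)\le\eta$ whenever $k\ge N(\eta)$. (This is the same observation already used just before Corollary~\ref{cor:odd-distortion-limit}.) Combining everything, fix $\varepsilon>0$ and pick $K$ with $2\,\cov_{\RP^n}(k)<\varepsilon$ for all $k\ge K$. For any such $k$ and any odd function $f\colon S^k\to S^n$, Theorem~\ref{thm:odd-modulus-discontinuity-bound} gives $\delta(f)\ge c_{n,k}$ while Theorem~\ref{thm:proj-packings} gives $c_{n,k}\ge\pi-2\,\cov_{\RP^n}(k)>\pi-\varepsilon$, so that $\pi-\varepsilon<\delta(f)\le\pi$. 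Since $\varepsilon$ was arbitrary, $\delta(f)\to\pi$ as $k\to\infty$, in fact uniformly over all odd functions $f\colon S^k\to S^n$.

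There is no genuinely hard step here: the corollary is an immediate consequence of the already-established Theorems~\ref{thm:odd-modulus-discontinuity-bound} and~\ref{thm:proj-packings}. The only part warranting a line of argument is the claim $\cov_{\RP^n}(k)\to 0$, which is routine from total boundedness of $\RP^n$; this is the (mild) main point to address, and it can be dispatched in a single sentence.
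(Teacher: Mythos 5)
Your proof is correct and follows exactly the paper's route: the paper deduces this corollary from $\delta(f)\ge c_{n,k}$ (Theorem~\ref{thm:odd-modulus-discontinuity-bound}) together with $\lim_{k\to\infty}c_{n,k}=\pi$, which in turn comes from Theorem~\ref{thm:proj-packings} and the fact that $\cov_{\RP^n}(k)\to 0$. Your added justification of that last limit via total boundedness of $\RP^n$, and the explicit upper bound $\delta(f)\le\pi$, are exactly the (routine) details the paper leaves implicit.
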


\begin{remark}
The modulus of discontinuity always lower bounds the distortion by ~\cite[Proposition~5.2]{lim2023gromov}.
Therefore Theorem~\ref{thm:odd-modulus-discontinuity-bound} implies the bound 
$\dis(f) \ge c_{n,k}$ for odd functions $f \colon S^k \to S^n$
from our Main Theorem.
Nevertheless, for ease of exposition, we have presented our results on distortion first, before moving now to the slightly more complicated case of modulus of discontinuity.
\end{remark}

In Section~\ref{ssec:lower-bound} we prove Theorem~\ref{thm:odd-modulus-discontinuity-bound} giving a lower bound on the modulus of discontinuity, and in Section~\ref{ssec:lower-bound-tight} we show that this lower bound is tight.
In Section~\ref{ssec:lower-bound-generalized} we generalize the domain $S^{k}$ in Theorem~\ref{thm:odd-modulus-discontinuity-bound} to instead be any $\Z/2$ space $X$ with coindex at least $k$.
Lastly, in Section~\ref{ssec:Euclidean} we consider spheres equipped with the Euclidean metric.

\subsection{Lower bound on the modulus of discontinuity}
\label{ssec:lower-bound}

To prove Theorem~\ref{thm:odd-modulus-discontinuity-bound}, we need the following lemma, which is similar to Lemma~\ref{lem:ind-map-distortion} except with distortion replaced by the modulus of discontinuity.

\begin{lemma}
\label{lem:ind-map-modulus-discontinuity}
Let $f\colon X \to Y$ be a function between metric spaces, and let $X$ be compact.
Then for any $\varepsilon>0$, there is a sufficiently small $\alpha_\varepsilon>0$ such that $f$ induces a simplicial map $\overline{f}:\vr{X}{\alpha_\varepsilon}\to \vr{Y}{\delta(f) + \varepsilon}$ defined by $\overline{f}([x_0,...,x_m]) = [f(x_0),...,f(x_m)]$.
If $f$ is odd, then so is $\overline{f}$.
\end{lemma}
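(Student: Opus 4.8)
The plan is to mimic the proof of Lemma~\ref{lem:ind-map-distortion}, but since the modulus of discontinuity $\delta(f)$ only controls the diameter of $f$ on \emph{small} neighborhoods (not globally), we must first pass to a fine covering of $X$ so that every simplex we encounter is supported in a set whose image has diameter close to $\delta(f)$. The compactness of $X$ is exactly what lets us do this uniformly. First I would fix $\varepsilon>0$ and, for each $x\in X$, choose an open neighborhood $U_x$ with $\diam(f(U_x))\le \delta(f)+\varepsilon$; such a neighborhood exists by the definition of $\delta(f)$ (using, say, $\tfrac{\varepsilon}{2}$ to be safe). By compactness there is a Lebesgue number $\lambda>0$ for the cover $\{U_x\}_{x\in X}$: every subset of $X$ of diameter less than $\lambda$ is contained in some $U_x$. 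Set $\alpha_\varepsilon \coloneqq \lambda$ (or any smaller positive number; if $X$ is a $\Z/2$ metric space, we may shrink $\alpha_\varepsilon$ further so that $\alpha_\varepsilon < \inf_{x\in X} d_X(x,-x)$, though this is not strictly needed for the statement).

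Next I would define $\overline{f}\colon \vr{X}{\alpha_\varepsilon}\to \vr{Y}{\delta(f)+\varepsilon}$ on vertices by $x\mapsto f(x)$ and extend linearly, exactly as in Lemma~\ref{lem:ind-map-distortion}: $\overline{f}([x_0,\dots,x_m]) = [f(x_0),\dots,f(x_m)]$. The key point to check is that this is simplicial, i.e.\ well-defined on simplices. If $\sigma = \{x_0,\dots,x_m\}$ is a simplex of $\vr{X}{\alpha_\varepsilon}$, then $\diam(\sigma)\le \alpha_\varepsilon \le \lambda$, so $\sigma\subseteq U_x$ for some $x\in X$, whence $f(\sigma)\subseteq f(U_x)$ and therefore $\diam(f(\sigma))\le \diam(f(U_x))\le \delta(f)+\varepsilon$. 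Thus $f(\sigma)$ spans a simplex of $\vr{Y}{\delta(f)+\varepsilon}$, so $\overline{f}$ is a well-defined simplicial map, and hence induces a continuous map on geometric realizations. (One should take a little care with the convention $\diam(\sigma)\le r$ versus $<r$ when passing through the Lebesgue number, but either works after harmlessly adjusting constants; one could also simply invoke the $\varepsilon/2$ slack built in above.) Finally, if $X$ and $Y$ are $\Z/2$ metric spaces and $f$ is odd, the verification that $\overline{f}$ is odd is the identical computation as in~\eqref{eq:f-odd}: $\overline{f}(-\sum_i\lambda_i x_i) = \sum_i\lambda_i f(-x_i) = -\sum_i\lambda_i f(x_i) = -\overline{f}(\sum_i\lambda_i x_i)$.

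The main obstacle here — and it is a mild one — is precisely the locality of $\delta(f)$: unlike $\dis(f)$, which bounds distance changes between \emph{all} pairs of points, $\delta(f)$ only gives control on sufficiently small neighborhoods, so the induced map on Vietoris--Rips complexes cannot be built at an arbitrary fixed scale $r$ but only at some small scale $\alpha_\varepsilon$ depending on $\varepsilon$ (and on $f$). Compactness of $X$ is what converts the pointwise choice of neighborhoods into a uniform scale via a Lebesgue number, and this is the only place compactness is used. Everything else is a verbatim adaptation of Lemma~\ref{lem:ind-map-distortion}.
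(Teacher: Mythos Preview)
Your proposal is correct and follows essentially the same approach as the paper: the paper's proof uses compactness to extract a finite subcover of half-radius balls and then takes $\alpha_\varepsilon$ smaller than the minimum half-radius, which is precisely a hands-on proof of the Lebesgue number lemma you invoke by name. The remaining steps---defining $\overline{f}$ on vertices, extending linearly, checking simplices map to simplices, and verifying oddness via~\eqref{eq:f-odd}---are identical.
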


\begin{proof}
We first show that for any $\varepsilon>0$, there exists $\alpha_\varepsilon>0$ such that for any $x, x'\in X$ with $d_X(x, x')\leq \alpha_\varepsilon$, we have $d_Y(f(x), f(x'))<\delta(f) + \varepsilon$.
From the definition of modulus of discontinuity, for any $x\in X$, there exists $r_x>0$ such that $\diam(f(B(x; r_x)))< \delta(f) + \varepsilon$.
Consider the open cover $\{B\left(x;\frac{r_x}{2}\right)\}_{x\in X}$ of $X$.
As $X$ is compact, there is a finite sub-cover $\{B\left(x_i; \frac{r_{x_i}}{2}\right)\}_{1\leq i \leq N}$.
Choose $\alpha_\varepsilon$ to satisfy $0<\alpha_\varepsilon<\min\left\{\frac{r_{x_1}}{2},\ldots,\frac{r_{x_N}}{2}\right\}$.
Let $x, x'$ be any pair of points in $X$ such that $d_X(x, x')\leq \alpha_\varepsilon$.
Since $\{B\left(x_i; \frac{r_{x_i}}{2}\right)\}_{1\leq i \leq N}$ is a cover of $X$, there is some $B\left(x_i;\frac{r_{x_i}}{2}\right)$ that contains $x$.
Then, 
\[d_X(x',x_i) \le d_X(x',x)+d_X(x,x_i) < \alpha_\varepsilon + \tfrac{r_{x_i}}{2} < r_{x_i}.\]
That is, both $x$ and $x'$ are inside $B(x_i; r_{x_i})$, and therefore \[d_Y(f(x), f(x')) \le \diam(f(B(x; r_x))) <\delta(f) + \varepsilon.\]

Define $\overline{f}:\vr{X}{\alpha_\varepsilon}\to \vr{Y}{\delta(f) + \varepsilon}$ by sending a vertex $x\in X$ to $f(x)\in Y$, and then extending linearly.
Equivalently, $\overline{f}([x_0,\ldots,x_m])=[f(x_0),\ldots,f(x_m)]$.
By the above paragraph, $d_Y(f(x), f(x'))< \delta(f) +\varepsilon$ whenever $d_X(x, x')\leq \alpha_\varepsilon$, and therefore $\overline{f}$ is a well-defined map.
If $f$ is odd, the verification that $\overline{f}$ is also odd is the same as in \eqref{eq:f-odd}.
\end{proof}

We remark that the above lemma is in some sense sharp: if $r<\delta(f)$, then for any $\alpha>0$ the proposed map $\overline{f}:\vr{X}{\alpha}\to \vr{Y}{r}$ defined by sending a vertex $x\in X$ to $f(x)\in Y$ and extending linearly would not be well-defined.

We can now prove Theorem~\ref{thm:odd-modulus-discontinuity-bound}, using a similar structure to the proof of our Main Theorem.

\begin{proof}[Proof of Theorem~\ref{thm:odd-modulus-discontinuity-bound}]
Let $k\ge n$, and let $f\colon S^k \to S^n$ be an odd function.
We must show that $\delta(f)\ge c_{n,k}$.
Let $\varepsilon>0$.
By Lemma~\ref{lem:ind-map-modulus-discontinuity}, there is some $\alpha_\varepsilon>0$ such that $f$ induces a map $\overline{f}\colon\vr{S^k}{\alpha_\varepsilon}\to \vr{S^n}{\delta(f)+ \varepsilon}$.
Choose a finite $\Z/2$ invariant $(\alpha_\varepsilon/2)$-covering $X\subset S^k$.
By Lemma~\ref{lem:covering}, we get an odd map $S^k\to \vr{X}{\alpha_\varepsilon}$.
Restrict the domain of $\overline{f}$ to obtain an odd map $\vr{X}{\alpha_\varepsilon}\to \vr{S^n}{\delta(f)+ \varepsilon}$.
The composition
\[ S^k\to \vr{X}{\alpha_\varepsilon} \to \vr{S^n}{\delta(f)+ \varepsilon}\]
is continuous and odd, showing that $\delta(f)+\varepsilon \ge c_{n,k}$ for all $\varepsilon > 0$.
Hence $\delta(f) \ge c_{n,k}$.
\end{proof}

\begin{remark}
\label{rem:odd-modulus-discontinuity-bound}
The same proof technique shows that any odd map $S^k \to Y$ has modulus of discontinuity at least $\inf\{r\ge 0 \mid \coind(\vr{Y}{r})\ge k\}$.
\end{remark}

\subsection{Tightness of the lower bound on modulus of discontinuity}
\label{ssec:lower-bound-tight}

We now show that Theorem~\ref{thm:odd-modulus-discontinuity-bound} is tight, generalizing the tightness of Theorem~\ref{thm:ds-inequality}.




\begin{theorem}
\label{thm:odd-modulus-discontinuity-bound-tight}
For any $k \ge n$ and $\varepsilon>0$, there exists an odd function $f\colon S^k \to S^n$ with modulus of discontinuity $\delta(f)\le c_{n,k}+\varepsilon$.
\end{theorem}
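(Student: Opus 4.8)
Throughout write $\vr{S^n}{r}$ for the geometric realization of the Vietoris--Rips complex. The plan is to post-compose an odd \emph{continuous} map $g\colon S^k\to\vr{S^n}{r}$ — which exists for $r$ slightly above $c_{n,k}$ by the very definition of $c_{n,k}$ — with a carefully chosen odd \emph{function} $\psi\colon\vr{S^n}{r}\to S^n$, and then to check that $\delta(\psi\circ g)\le r$. First I would dispose of degenerate cases: if $n=0$ then any odd function $S^k\to S^0$ has modulus of discontinuity at most $\diam(S^0)=\pi<c_{0,k}+\varepsilon$, so assume $n\ge 1$. Since the bound $c_{n,k}+\varepsilon$ only weakens as $\varepsilon$ grows, and since $c_{n,k}<\pi$ (see~\cite[Theorem~2]{ABF2}), we may further assume $\varepsilon<\pi-c_{n,k}$. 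By definition of $c_{n,k}$ as an infimum there is then a scale $r$ with $c_{n,k}\le r<c_{n,k}+\varepsilon<\pi$ admitting an odd map $g\colon S^k\to\vr{S^n}{r}$; fix such $g$ and $r$.

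Next I would construct $\psi$. Every point $p\in\vr{S^n}{r}$ is uniquely written $p=\sum_v\beta_v(p)\,v$ with $\beta_v(p)\ge 0$, $\sum_v\beta_v(p)=1$, and carrier $\sigma(p)\coloneqq\{v:\beta_v(p)>0\}$ a simplex of $\vr{S^n}{r}$, so $\diam(\sigma(p))\le r$. Let $T(p)\coloneqq\{v:\beta_v(p)=\max_w\beta_w(p)\}\subseteq\sigma(p)$ be the set of vertices of maximal barycentric coordinate. To break ties $\Z/2$-equivariantly, fix the set $D\coloneqq\{x\in S^n:\text{the first nonzero coordinate of }x\text{ is positive}\}$, so that $S^n=D\sqcup(-D)$, write $\bar x$ for whichever of $x,-x$ lies in $D$, and fix the lexicographic linear order $\prec$ on $\R^{n+1}\supseteq S^n$. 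Since $r<\pi$, the carrier $\sigma(p)$ contains no antipodal pair, hence $v\mapsto\bar v$ is injective on $T(p)$; define $\psi(p)$ to be the unique $v\in T(p)$ for which $\bar v$ is $\prec$-minimal among $\{\bar w:w\in T(p)\}$. Replacing $p$ by $-p$ sends $\beta_v\mapsto\beta_{-v}$, hence preserves the maximal value, sends $T(p)$ to $-T(p)$, and fixes the set $\{\bar w:w\in T(p)\}$; so $\psi(-p)=-\psi(p)$, i.e.\ $\psi$ is odd, and therefore $f\coloneqq\psi\circ g\colon S^k\to S^n$ is an odd function.

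It remains to show $\delta(f)\le r$. Because $S^k$ is compact and $g$ is continuous, $g(S^k)$ lies in a finite subcomplex $K\subseteq\vr{S^n}{r}$, and since $g$ is continuous it suffices to prove $\delta(\psi|_{|K|})\le r$: for $y\in S^k$, pulling back (under $g$) a neighborhood of $g(y)$ on which $\psi$ has diameter at most $r$ gives a neighborhood of $y$ with the same property. So fix $p\in|K|$ with carrier $\sigma=\sigma(p)$ and let $U\coloneqq\mathrm{st}_K(\sigma)$ be the open star of $\sigma$ in $K$; it is an open neighborhood of $p$, every point of which has carrier containing $\sigma$, and only the finitely many vertices $V_\sigma\coloneqq\bigcup\{\mathrm{vert}(\tau):\tau\in K,\ \sigma\subseteq\tau\}$ appear with positive coordinate on $U$. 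The barycentric coordinate functions $\beta_v$ ($v\in V_\sigma$) and their pointwise maximum $M(\cdot)$ are continuous on $U$; at $p$ we have $\beta_v(p)=M(p)$ for $v\in T(p)$ and $\beta_v(p)<M(p)$ for the (finitely many) $v\in V_\sigma\setminus T(p)$. Hence, on a smaller neighborhood $U'\ni p$, the maximal coordinate is attained only at vertices of $T(p)$, so $T(q)\subseteq T(p)$ and thus $\psi(q)\in T(p)\subseteq\sigma$ for all $q\in U'$. Therefore $\diam_{S^n}(\psi(U'))\le\diam_{S^n}(\sigma)\le r$. As $p$ was arbitrary, $\delta(\psi|_{|K|})\le r$, and so $\delta(f)\le r<c_{n,k}+\varepsilon$, which is what we want.

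The only genuine difficulty, and the step I would handle most carefully, is the choice of $\psi$: it must be simultaneously odd and have modulus of discontinuity at most $r$. Mapping a point to some vertex of its carrier is unavoidably discontinuous, but selecting the vertex of \emph{largest} barycentric coordinate confines the discontinuities to the lower-dimensional locus where two coordinates tie, and — the key point — near any fixed point the newly appearing (hence small-coordinate) vertices can never be maximal, so the image of a small enough neighborhood stays within a single simplex, of diameter $\le r$. The equivariant tie-breaking rule (representatives of antipodal pairs, linearly ordered) then upgrades this selection to an odd function, and restricting attention to the finite subcomplex $K$ containing the compact image $g(S^k)$ sidesteps the point-set pathologies of non-locally-finite simplicial complexes. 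This recovers the tightness half of Theorem~\ref{thm:ds-inequality} in the case $k=n+1$, since $c_{n,n+1}=r_n$.
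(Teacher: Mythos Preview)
Your proof is correct and follows essentially the same strategy as the paper: post-compose an odd continuous map $S^k\to\vr{S^n}{r}$ with an odd ``nearest vertex of maximal barycentric coordinate'' selection $\vr{S^n}{r}\to S^n$, using an equivariant tie-breaking rule, and show that the result has modulus of discontinuity at most~$r$. The paper packages the same construction in the language of the barycentric subdivision (the minimal face $\sigma_0$ in the chain through a point is exactly your set $T(p)$ of vertices of maximal coordinate, as the paper itself remarks), and uses the open star in $\sd(\vr{S^n}{r})$ in place of your finite-subcomplex reduction and continuity-of-coordinates argument; these are equivalent ways of verifying the same local bound.
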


To prove Theorem~\ref{thm:odd-modulus-discontinuity-bound-tight}, we will need the following lemma.
In this subsection, for the purpose of clarity, we use different notation to differentiate between a simplicial complex $K$ and its geometric realization $|K|$, even though in other sections of this paper we identify simplicial complexes with their geometric realizations.

\begin{lemma}
\label{lem:VR-to-mod}
Let $X$ be a $\Z/2$ topological space, let $Y$ be a $\Z/2$ metric space, and let $r\ge 0$ be such that $d(y,-y)<r$ for all $y,y'\in Y$.
If there exists an odd map $X\to |\vr{Y}{r}|$, then there exists an odd function $f\colon X\to Y$ with modulus of discontinuity $\delta(f)\le r$.
\end{lemma}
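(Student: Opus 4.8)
The plan is to construct the function $f$ explicitly from a given map $g\colon X\to|\vr{Y}{r}|$ by choosing, for each point of $X$, a single vertex of the carrier simplex of its image. Concretely, for $x\in X$, the point $g(x)\in|\vr{Y}{r}|$ lies in the relative interior of a unique simplex $\sigma(x)=[y_0,\dots,y_m]$ of $\vr{Y}{r}$ (its \emph{support} or carrier), meaning $g(x)=\sum_i \lambda_i y_i$ with all $\lambda_i>0$. I would pick any selection rule --- say, send $x$ to a vertex $y_{i}$ of $\sigma(x)$ achieving the maximal barycentric coordinate, with ties broken by some fixed well-ordering of $Y$ --- and define $f(x)\coloneqq y_i\in Y$. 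Since every vertex of $\sigma(x)$ is a point of $Y$, this is a genuine function $f\colon X\to Y$.

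Next I would bound the modulus of discontinuity of $f$. Fix $x\in X$. The key topological fact about geometric realizations is that the set of points whose carrier \emph{contains} a fixed simplex $\sigma$ is open (this is the open star of $\sigma$); equivalently, the carrier function is lower semicontinuous, so there is an open neighborhood $U_x$ of $x$ such that for every $x'\in U_x$ the carrier $\sigma(x')$ has $\sigma(x)$ as a face, i.e.\ every vertex of $\sigma(x')$... wait, rather: every point $x'$ near $x$ has carrier a coface of $\sigma(x)$, so the vertex set of $\sigma(x')$ \emph{contains} the vertex set of $\sigma(x)$. That is the wrong inclusion for bounding $\diam(f(U_x))$. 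Instead I would use upper semicontinuity in the following averaged sense: shrink $U_x$ so that the barycentric coordinate functions vary by less than the gap between the largest and second-largest coordinate of $g(x)$; then for all $x'\in U_x$ the selection rule picks a vertex of $\sigma(x')$ that is \emph{also} a vertex of $\sigma(x)$ — indeed the same vertex $y_i$, since its coordinate stays strictly largest. Hence $f(x')=f(x)=y_i$ for all $x'\in U_x$, giving $\diam(f(U_x))=0\le r$. (If $g(x)$ is a vertex or has coordinate ties, one must argue slightly more carefully, but a uniform choice of neighborhood still forces $f(x')$ to be a vertex of $\sigma(x)$, and $\diam(\sigma(x))\le r$ by definition of the Vietoris--Rips complex, so $\diam(f(U_x))\le r$ in all cases.)

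The main obstacle is handling the degenerate cases where the maximal barycentric coordinate is not uniquely attained, since then the tie-breaking rule can be discontinuous in a way that jumps between vertices of $\sigma(x)$ — but all such vertices lie in the simplex $\sigma(x)$ of $\vr{Y}{r}$, which has diameter $\le r$, so the worst that happens on a small enough neighborhood is $\diam(f(U_x))\le \diam(\sigma(x)) \le r$. One must also be careful that the neighborhood $U_x$ can be chosen so that $\sigma(x')$ is always a coface of $\sigma(x)$ \emph{and} small enough to control coordinates; both are open conditions, so their intersection works. Assembling these observations over all $x\in X$ yields $\delta(f)\le r$, completing the proof. I would present this cleanly by first recording the lemma that the carrier of a point in a geometric realization is lower semicontinuous, then defining $f$ via a maximal-coordinate selection, and finally verifying the neighborhood bound case-by-case.
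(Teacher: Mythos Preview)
Your approach is correct and essentially the same as the paper's: both define $f(x)$ to be a vertex of the carrier simplex of $g(x)$ achieving the maximal barycentric coordinate (with an arbitrary tie-breaking rule), and both show that on a small enough neighborhood of $x$ the value $f(x')$ lands in a fixed simplex of $\vr{Y}{r}$, hence $\diam(f(U_x))\le r$.

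The main difference is in how the neighborhood is produced. You argue directly via continuity of the individual barycentric coordinate functions $\lambda_y\circ g$; this works, but when $Y$ is infinite you should make explicit the step that controls vertices outside $\sigma(x)$ (since there may be infinitely many of them): on a neighborhood where each $\lambda_y$ for $y\in\sigma(x)$ is within $\varepsilon$ of its value at $g(x)$, the sum $\sum_{y\in\sigma(x)}\lambda_y(g(x'))>1-|\sigma(x)|\varepsilon$, so every $\lambda_z(g(x'))$ with $z\notin\sigma(x)$ is below $|\sigma(x)|\varepsilon$, while the maximum over $y\in\sigma(x)$ is at least $(1-|\sigma(x)|\varepsilon)/|\sigma(x)|$; for $\varepsilon$ small enough the latter dominates. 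The paper sidesteps this analysis by passing to the barycentric subdivision $\sd(\vr{Y}{r})$: the set $\sigma_0$ of max-coordinate vertices at $g(x)$ is exactly the minimal term of the carrier of $g(x)$ in $\sd(\vr{Y}{r})$, and the open star of that carrier in $\sd(\vr{Y}{r})$ is an open set whose preimage under $g$ is the desired neighborhood; any $x'$ in it has $f(x')\in\sigma_0$ automatically. This packaging is cleaner and, as a bonus, makes the odd version (used in the subsequent tightness theorem) immediate by choosing the tie-breaking function $v$ on simplices rather than via a well-ordering of~$Y$.
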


\begin{proof}
Let $\sd(\vr{Y}{r})$ be the barycentric subdivision of $\vr{Y}{r}$, which means that a $k$-simplex of $\sd(\vr{Y}{r})$ is a chain of proper inclusions $\sigma_0 \subset \ldots \subset \sigma_k$, where each $\sigma_i$ is a simplex in $\vr{Y}{r}$.
A \emph{refinement} of $\sigma_0 \subset \ldots \subset \sigma_k$ is a chain of proper inclusions that contains each of $\sigma_0,\ldots,\sigma_k$ and potentially additional simplices of $\vr{Y}{r}$; such a refinement corresponds to a coface of $\sigma_0 \subset \ldots \subset \sigma_k$ in $\sd(\vr{Y}{r})$.
Two basic properties of barycentric subdivisions are that we have a natural bijection $|\vr{Y}{r}|=|\sd(\vr{Y}{r})|$ as sets, and that the interiors of the simplices of $\sd(\vr{Y}{r})$ form a partition of $|\sd(\vr{Y}{r})|$ (so long as we consider the interior of a vertex to be that vertex).

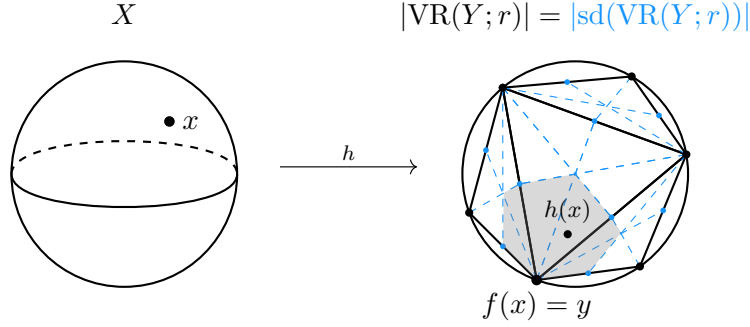
\begin{figure}[htb]
\centering
\begin{tikzcd}
    \begin{tikzpicture}
    \filldraw[fill=none, thick](0,0) circle (1.5);
    \draw [dashed, thick](-1.5,0) to [out=90,in=90,looseness=.5] (1.5,0);
    \draw [thick](1.5,0) to [out=270,in=270,looseness=.5] (-1.5,0);
    \node[fill=black,circle,inner sep=.13em] (x) at (0.6,0.7) {};
    \node[right,black] at (x)  {$x$};
    \node[fill=none] at (0,2){$X$};
    \end{tikzpicture}
    \hspace{1em}
    \ar[rr,"h"] 
    &&
    \hspace{-1.5em}
    \begin{tikzpicture}
    \filldraw[fill=none, thick](0,0) circle (1.5);
    \node[fill=none] at (0,2){$|\vr{Y}{r}|=\color{lightblue}{|\sd(\vr{Y}{r})|}$};
    \node[fill=none] (c) at (0,0){};
    \coordinate (1) at (10:1.5);
    \coordinate (2) at (130:1.5);
    \coordinate (3) at (250:1.5);
    \coordinate (4) at (60:1.5);
    \coordinate (5) at (200:1.5);
    \coordinate (6) at (305:1.5);
    \coordinate (12) at ($(1)!0.5!(2)$);
    \coordinate (23) at ($(2)!0.5!(3)$);
    \coordinate (13) at ($(1)!0.5!(3)$);
    
    \draw [color=black, thick] (1)--(2)--(4)--(1);
     \draw [color=black, thick] (3)--(2)--(5)--(3);
      \draw [color=black, thick] (1)--(3)--(6)--(1);
    \draw [color=lightblue,dashed] (1)--(23);
    \draw [color=lightblue,dashed] (2)--(13);
    \draw [color=lightblue,dashed] (3)--(12);
    \draw [color=lightblue,dashed] (4)--(12);
    \draw [color=lightblue,dashed] (5)--(23);
    \draw [color=lightblue,dashed] (6)--(13);
    \foreach \y in {4,6} 
        {\coordinate (1\y) at ($(1)!0.5!(\y)$);
        \node [fill=lightblue,circle,inner sep=.07em] at (1\y) {};}
    \foreach \y in {4,5} 
        {\coordinate (2\y) at ($(2)!0.5!(\y)$);
        \node [fill=lightblue,circle,inner sep=.07em] at (2\y) {};}
    \foreach \y in {5,6} 
        {\coordinate (3\y) at ($(3)!0.5!(\y)$);
        \node [fill=lightblue,circle,inner sep=.07em] at (3\y) {};}
    \draw [color=black, thick] (1)--(2)--(3)--(1);
    \draw [color=lightblue,dashed] (1)--(23);
    \draw [color=lightblue,dashed] (2)--(13);
    \draw [color=lightblue,dashed] (3)--(12);
    \draw [color=lightblue,dashed] (1)--(24);
    \draw [color=lightblue,dashed] (1)--(36);
    \draw [color=lightblue,dashed] (2)--(14);
    \draw [color=lightblue,dashed] (2)--(35);
    \draw [color=lightblue,dashed] (3)--(25);
    \draw [color=lightblue,dashed] (3)--(16);

    \fill[gray, opacity=.3] (3)-- (36)--(0.61,-0.78)-- (13)--(0,0)--(23)--(-0.95,-0.25)--(35)--(3);
    \node[fill=lightblue,circle,inner sep=.07em] at (12) {};    
    \node[fill=lightblue,circle,inner sep=.07em] at (23) {};
    \node[fill=lightblue,circle,inner sep=.07em] at (13) {};    
    \node[fill=black,circle,inner sep=.1em] at (1) {};
    \node[fill=black,circle,inner sep=.1em] at (2) {};
    \node[fill=black,circle,inner sep=.13em] at (3) {};
    \node[fill=black,circle,inner sep=.1em] at (4) {};
    \node[fill=black,circle,inner sep=.1em] at (5) {};
    \node[fill=black,circle,inner sep=.1em] at (6) {};
    \node[fill=black,circle,inner sep=.1em] (hx) at (-0.1,-0.8) {};
    \node[black, above] at (hx)  {\footnotesize $h(x)$};    
    \node[right,black, below] at (3)  {$f(x)=y$};
    \end{tikzpicture}
\end{tikzcd}
\caption{Figure accompanying the proof of Lemma~\ref{lem:VR-to-mod}, in the case when $X=S^2$ and $Y=S^1$.
Only four 2-simplices of $|\vr{Y}{r}|$ are drawn.
If $h(x)$ is any point in the gray shaded region, then necessarily $f(x)=y$.
}
\label{fig:modulus-discontinuity-tight}
\end{figure}

Let $h\colon X\to |\vr{Y}{r}|$ be an odd map.
Since $d(y,-y)<r$ for all $y,y'\in Y$, no simplex of $\vr{Y}{r}$ is equal to its antipode, by which we mean that if $\sigma=\{y_0,\ldots,y_m\}\in\vr{Y}{r}$, then $\sigma\neq-\sigma\coloneqq\{-y_0,\ldots,-y_m\}$.
Hence we can choose an odd function $v\colon \vr{Y}{r}\to Y$ that assigns each simplex $\sigma\in \vr{Y}{r}$ to a vertex of that simplex, where \emph{odd} means that $v(-\sigma)=-v(\sigma)$ for all $\sigma\in\vr{Y}{r}$.

Define the odd function $f\colon X\to Y$ as follows.
For $x\in X$, if $h(x)\in |\vr{Y}{r}|=|\sd(\vr{Y}{r})|$ is in the interior of the simplex $\sigma_0 \subset \ldots \subset \sigma_k$ in $\sd(\vr{Y}{r})$, then we define $f(x)=v(\sigma_0)$.
Since $h$ is odd and since $v$ is odd, it follows that $f$ is odd.
When one writes $h(x)$ using barycentric coordinates in $|\vr{Y}{r}|$, we note that $\sigma_0$ is the set of vertices in $Y$ achieving the largest barycentric coordinate of $h(x)$.
If there is a unique such vertex, then $f(x)$ is equal to this vertex, and otherwise ties are broken in an odd way using the function $v$; see Figure~\ref{fig:modulus-discontinuity-tight}.

We now show that the modulus of discontinuity of $f$ satisfies $\delta(f)\le r$.
Let $x\in X$.
Let $h(x)$ be in the interior of the simplex $\sigma_0 \subset \ldots \subset \sigma_k$ in $\sd(\vr{Y}{r})$.
Let $S\subseteq|\sd(\vr{Y}{r})|$ be the union of the interiors of all cofaces of $\sigma_0 \subset \ldots \subset \sigma_k$ in $\sd(\vr{Y}{r})$.
Then $S$ is an open set in $|\sd(\vr{Y}{r})|=|\vr{Y}{r}|$, and since $h$ is continuous, the preimage $h^{-1}(S)$ is an open neighborhood about $x$ in $X$.
We claim that $\diam(f(h^{-1}(S)))\le r$.
Indeed, let $x',x''\in h^{-1}(S)\subseteq X$.
By the definition of $S$, this means that $h(x')$ is in the interior of some simplex of $\sd(\vr{Y}{r})$ that is a coface of $\sigma_0 \subset \ldots \subset \sigma_k$, i.e.\ a refinement of $\sigma_0 \subset \ldots \subset \sigma_k$.
Hence $f(x')$ is a vertex of $\sigma_0$.
By the same argument, $f(x'')$ is a vertex of $\sigma_0$.
Since $\sigma_0$ is a simplex in $\vr{Y}{r}$, it follows that $d_{Y}(f(x'),f(x''))\le r$.
Because $x'$ and $x''$ are arbitrary points in $h^{-1}(S)$, we have shown that $\diam(f(h^{-1}(S)))\le r$.
Finally, since $h^{-1}(S)$ is an open neighborhood about $x$ in $X$, it follows that $\delta(f)\le r$.
\end{proof}

\begin{proof}[Proof of Theorem~\ref{thm:odd-modulus-discontinuity-bound-tight}]
Let $k \ge n$ and $\varepsilon>0$.
We must build an odd function $f\colon S^k \to S^n$ with modulus of discontinuity $\delta(f)\le c_{n,k}+\varepsilon$.

Theorem~2 of~\cite{ABF2} implies that $c_{n,k}<\pi$ for all $k\ge n$.
If $c_{n,k}+\varepsilon\ge \pi$, then one can simply let $f$ be an arbitrary odd function, since $\delta(f)\le\diam(S^n)=\pi$.
Hence it suffices to consider the case when $r\coloneqq c_{n,k}+\varepsilon<\pi$.

By the definition of $c_{n,k}$ in Definition~\ref{def:cnk}, there exists an odd map $h\colon S^k \to |\vr{S^n}{r}|$.
Since $r<\pi$, we have $d(y,-y)<r$ for all $y,y'\in S^n$.
So by Lemma~\ref{lem:VR-to-mod}, there exists an odd function $f\colon S^k\to S^n$ with modulus of discontinuity $\delta(f)\le r$.
\end{proof}

\begin{example}
We construct an explicit example in the case $n=1$.
In~\cite[Section~5]{ABF}, Adams, Bush, and Frick construct an injective odd map $\partial\cB_{2\ell+2}\hookrightarrow\vrm{S^1}{\frac{2\pi \ell}{2\ell+1}}$ from the boundary of the Barvinok--Novik orbitope to the Vietoris--Rips metric thickening of the circle.
The radial projection map $S^{2\ell+1}\to\partial\cB_{2\ell+2}$ in $\R^{2\ell+2}$ is a homeomorphism.
Furthermore, for any $\varepsilon>0$ we can use~\cite{AMMW,MoyMasters} to build an odd map $\vrm{S^1}{\frac{2\pi \ell}{2\ell+1}}\to|\vr{S^1}{\frac{2\pi \ell}{2\ell+1}+\varepsilon}|$; this follows from the $\varepsilon$-interleavings constructed in~\cite{AMMW,MoyMasters}, which in this setting can be made $\Z/2$ equivariant.
So by composition we obtain an odd map 
\[S^{2\ell+1}\to\partial\cB_{2\ell+2}\hookrightarrow\vrm{S^1}{\tfrac{2\pi \ell}{2\ell+1}}\to\left\vert\vr{S^1}{\tfrac{2\pi \ell}{2\ell+1}+\varepsilon}\right\vert.\]
By Lemma~\ref{lem:VR-to-mod}, we obtain an odd function $f\colon S^{2\ell+1} \to S^1$ with modulus of discontinuity $\delta(f)\le \frac{2\pi \ell}{2\ell+1}+\varepsilon=c_{1,2\ell+1}+\varepsilon$.
By restricting to the equator, we also obtain an odd function $f\colon S^{2\ell} \to S^1$ with $\delta(f)\le \frac{2\pi \ell}{2\ell+1}+\varepsilon=c_{1,2\ell}+\varepsilon$.
\end{example}

\subsection{Generalization of the lower bound on modulus of discontinuity}
\label{ssec:lower-bound-generalized}

We may generalize the domain $S^{k}$ in Theorem~\ref{thm:odd-modulus-discontinuity-bound} to any $\Z/2$ space $X$ with coindex at least $k$:

\begin{theorem}
\label{thm:odd-modulus-discontinuity-bound-generalized}
Let $X$ be a $\Z/2$ space with $\coind(X) \ge k$.
Then any odd function $f \colon X \to S^{n}$ with $k \ge n$ has modulus of discontinuity $\delta(f) \ge c_{n, k}$.
(In particular, this holds if $X$ is $(k - 1)$-connected.)
\end{theorem}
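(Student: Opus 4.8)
The plan is to replay the proof of Theorem~\ref{thm:odd-modulus-discontinuity-bound} verbatim, replacing the single place where we used the fact that the domain is $S^k$ by the weaker hypothesis $\coind(X)\ge k$. The only role the sphere played in the earlier argument was to supply an odd map $S^k\to\vr{X'}{\alpha_\varepsilon}$ for a suitable finite $\tfrac{\alpha_\varepsilon}{2}$-covering $X'\subset S^k$ (via Lemma~\ref{lem:covering}), and then to compose with the induced map $\overline f\colon\vr{S^k}{\alpha_\varepsilon}\to\vr{S^n}{\delta(f)+\varepsilon}$. Here we do not have a covering argument available, so instead I would first apply Lemma~\ref{lem:ind-map-modulus-discontinuity} to the odd function $f\colon X\to S^n$ (note $X$ should be assumed compact, or else we should run the argument only using the existence of a map $\vr{X}{\alpha_\varepsilon}\to\vr{S^n}{\delta(f)+\varepsilon}$ — I will return to this point below), obtaining for each $\varepsilon>0$ some $\alpha_\varepsilon>0$ and an induced odd map $\overline f\colon\vr{X}{\alpha_\varepsilon}\to\vr{S^n}{\delta(f)+\varepsilon}$. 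Then I would use the hypothesis $\coind(X)\ge k$ to get an odd map $S^k\to X$, and observe that the identity-on-vertices inclusion $X\hookrightarrow\vr{X}{\alpha_\varepsilon}$ is odd, so that the composite
\[
S^k\longrightarrow X\hookrightarrow\vr{X}{\alpha_\varepsilon}\xrightarrow{\ \overline f\ }\vr{S^n}{\delta(f)+\varepsilon}
\]
is a continuous odd map. By Definition~\ref{def:cnk} this forces $\delta(f)+\varepsilon\ge c_{n,k}$, and letting $\varepsilon\to 0$ gives $\delta(f)\ge c_{n,k}$.

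The parenthetical last sentence then follows immediately from \eqref{eq:connected-coindex}: if $X$ is $(k-1)$-connected then $\coind(X)\ge k$, so the hypothesis of the theorem is satisfied.

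The one technical wrinkle — and the place I expect the most care is needed — is the compactness hypothesis hidden in Lemma~\ref{lem:ind-map-modulus-discontinuity}: that lemma as stated requires $X$ compact in order to extract a uniform $\alpha_\varepsilon$ from the pointwise modulus-of-discontinuity data. For a general $\Z/2$ space $X$ with $\coind(X)\ge k$ there is no compactness. The clean fix is to note that we only ever need an odd map \emph{out of $S^k$}, not out of all of $X$: fix an odd map $g\colon S^k\to X$; since $S^k$ is compact, the composite $f\circ g\colon S^k\to S^n$ is an odd function with $\delta(f\circ g)\le\delta(f)$ (precomposition with a continuous map cannot increase the modulus of discontinuity — this is a one-line check from the definition of $\delta$), so Theorem~\ref{thm:odd-modulus-discontinuity-bound} applied directly to $f\circ g$ yields $\delta(f\circ g)\ge c_{n,k}$, hence $\delta(f)\ge\delta(f\circ g)\ge c_{n,k}$. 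This reduces the general statement to the already-proved sphere case with no new machinery. Alternatively, if one wants a self-contained proof, one applies Lemma~\ref{lem:ind-map-modulus-discontinuity} to $f\circ g$ rather than to $f$, and the argument of the first paragraph goes through with $\vr{X}{\alpha_\varepsilon}$ replaced by $\vr{S^k}{\alpha_\varepsilon}$ and $\overline{f\circ g}$ in place of $\overline f$. Either way the essential content is that $c_{n,k}$ depends on the domain only through its coindex, which is exactly the monotonicity $\coind(X)\le\coind(Y)\Rightarrow$ ``odd maps into $\vr{S^n}{r}$ propagate'' recorded in Section~\ref{ssec:background-bu}.
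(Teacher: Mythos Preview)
Your proposal is correct, and the ``clean fix'' you arrive at---precompose with an odd map $g\colon S^k\to X$, use that precomposition with a continuous map cannot increase the modulus of discontinuity, then apply Theorem~\ref{thm:odd-modulus-discontinuity-bound} to $f\circ g$---is exactly the paper's proof. The paper isolates the precomposition fact as a separate Lemma~\ref{lem:modulus-discontinuity-compose} (with a short proof via the pointwise modulus of discontinuity), but otherwise the argument is identical.
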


This follows since precomposing a function $f \colon X \to S^{n}$ with a map $g \colon S^{k} \to X$ cannot increase the modulus of discontinuity of $f$; more precisely, $\delta(f) \ge \delta(f \circ g)$.
In order to prove this fact (Lemma~\ref{lem:modulus-discontinuity-compose}), we define a pointwise version of the modulus of discontinuity.

Let $X$ be a topological space, let $Y$ be a metric space, and let $f \colon X \to Y$.
Then for $x \in X$, the \emph{modulus of discontinuity of $f$ at $x$} is
\begin{equation*}
\delta(f, x) := \inf\{\text{diam}(f(U)) \mid \text{$U$ is an open neighborhood of $x$}\}.
\end{equation*}
Note that $\delta(f) = \sup_{x \in X}\delta(f, x)$.
Then we have the following lemma:

\begin{lemma}
\label{lem:modulus-discontinuity-compose}
Let $X, Y$ be topological spaces, let $Z$ be a metric space, let $f \colon Y \to Z$ be a function, and let $g \colon X \to Y$ be a map.
Then
\begin{enumerate}
\item
For all $x \in X$, $\delta(f, g(x)) \ge \delta(f \circ g, x)$.
\item
$\delta(f) \ge \delta(f \circ g)$.
\end{enumerate}
\end{lemma}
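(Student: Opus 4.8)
The statement to prove is Lemma~\ref{lem:modulus-discontinuity-compose}: for a map $g\colon X\to Y$ and a function $f\colon Y\to Z$ into a metric space, we have $\delta(f,g(x))\ge\delta(f\circ g,x)$ for all $x\in X$, and consequently $\delta(f)\ge\delta(f\circ g)$. The whole argument hinges on the single fact that $g$ is continuous, so that preimages of open neighborhoods are open neighborhoods. First I would prove part~(1). Fix $x\in X$ and let $V$ be any open neighborhood of $g(x)$ in $Y$. Since $g$ is continuous, $U\coloneqq g^{-1}(V)$ is an open neighborhood of $x$ in $X$. Then $(f\circ g)(U)=f(g(U))\subseteq f(V)$, because $g(U)=g(g^{-1}(V))\subseteq V$. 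Hence $\operatorname{diam}((f\circ g)(U))\le\operatorname{diam}(f(V))$, and therefore
\[
\delta(f\circ g,x)=\inf_{U\ni x\text{ open}}\operatorname{diam}((f\circ g)(U))\le\operatorname{diam}((f\circ g)(U))\le\operatorname{diam}(f(V)).
\]
Taking the infimum over all open neighborhoods $V$ of $g(x)$ gives $\delta(f\circ g,x)\le\delta(f,g(x))$, which is part~(1).

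**From the pointwise bound to part~(2).** Using the identity $\delta(h)=\sup_{x}\delta(h,x)$ noted just before the lemma, I would write, for every $x\in X$,
\[
\delta(f\circ g,x)\le\delta(f,g(x))\le\sup_{y\in Y}\delta(f,y)=\delta(f),
\]
where the first inequality is part~(1) and the second is immediate since $g(x)\in Y$. Taking the supremum over $x\in X$ yields $\delta(f\circ g)=\sup_{x\in X}\delta(f\circ g,x)\le\delta(f)$, which is part~(2).

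**Main obstacle.** There is essentially no obstacle here: the lemma is a routine continuity argument, and the only subtlety is bookkeeping the direction of the inequalities (the composite is \emph{more} continuous than $f$, so its modulus of discontinuity is \emph{smaller}). One minor point worth stating carefully is the set-containment $g(g^{-1}(V))\subseteq V$, which holds for any function and needs no hypotheses; continuity of $g$ is used only to guarantee $g^{-1}(V)$ is open. If one wants, one can also handle degenerate cases (e.g.\ $\delta(f,g(x))=\infty$, which can occur if $Z$ is unbounded) uniformly, since the inequality $\delta(f\circ g,x)\le\delta(f,g(x))$ is trivially true whenever the right-hand side is infinite. No compactness, no equivariance, and no structure on $Z$ beyond being a metric space is needed. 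This lemma then feeds directly into Theorem~\ref{thm:odd-modulus-discontinuity-bound-generalized}: given $\coind(X)\ge k$, pick an odd map $g\colon S^k\to X$, apply part~(2) with $f$ the odd function $X\to S^n$ to get $\delta(f)\ge\delta(f\circ g)$, and then invoke Theorem~\ref{thm:odd-modulus-discontinuity-bound} on the odd function $f\circ g\colon S^k\to S^n$ to conclude $\delta(f\circ g)\ge c_{n,k}$.
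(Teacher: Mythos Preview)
Your proof is correct and follows essentially the same route as the paper's: take an open neighborhood of $g(x)$, pull it back by continuity of $g$, compare diameters of images, and pass to the infimum for part~(1); then take suprema using $\delta(h)=\sup_x\delta(h,x)$ for part~(2). The only differences are notational (you call the neighborhood in $Y$ by $V$ and its preimage $U$, whereas the paper uses $U$ for the neighborhood in $Y$).
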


\begin{proof}
For (1), let $U$ be an open neighborhood of $g(x)$ in $Y$.
Then $g^{-1}(U)$ is an open neighborhood of $x$ in $X$, and $(f \circ g)(g^{-1}(U)) \subseteq f(U)$.
Therefore,
$$\text{diam}(f(U)) \ge \text{diam}((f \circ g)(g^{-1}(U))) \ge \delta(f \circ g, x).$$
Taking the infimum over all such $U$, we obtain $\delta(f, g(x)) \ge \delta(f \circ g, x)$.
For (2), we have
$$\delta(f) = \sup_{y \in Y}\delta(f, y) \ge \sup_{x \in X}\delta(f, g(x)) \ge \sup_{x \in X}\delta(f \circ g, x) = \delta(f \circ g).$$
This completes the proof.
\end{proof}

\begin{proof}[Proof of Theorem~\ref{thm:odd-modulus-discontinuity-bound-generalized}]
Since $\coind(X) \ge k$, there exists an odd map $g \colon S^{k} \to X$.
Consider the composite function $f \circ g \colon S^{k} \to S^{n}$.
Since $g$ is a map, we have $\delta(f) \ge \delta(f \circ g)$, by Lemma~\ref{lem:modulus-discontinuity-compose}.
We also have $\delta(f \circ g) \ge c_{n, k}$, by Theorem~\ref{thm:odd-modulus-discontinuity-bound}.
Therefore, $\delta(f) \ge c_{n, k}$.
\end{proof}

\subsection{Lower bound on the Gromov--Hausdorff distance between Euclidean spheres}
\label{ssec:Euclidean}

In this subsection we use the Euclidean metric on spheres, instead of the geodesic metric.
For any integer $n\geq 0$, let $S_E^n$ denote the $n$-dimensional unit sphere equipped with the Euclidean metric; the Euclidean distance between $x,x' \in S_E^n$ is $\|x-x'\|$.
We will lower bound the distortion of odd maps $S_E^k \to S_E^n$ with $k>n$, and then use this to lower bound the Gromov--Hausdorff distance between $S_E^k$ and $S_E^n$.

We first note that the generalized Dubins--Schwarz inequality (Theorem~\ref{thm:odd-modulus-discontinuity-bound}) can be formulated in terms of the Euclidean metric on spheres.
The following result comes from combining Theorem~\ref{thm:odd-modulus-discontinuity-bound} with the fact that for any two points $x, x'$ on the unit sphere, we have $\|x - x'\| = 2\sin\left(\tfrac{d_{S^n}(x, x')}{2}\right)$.

\begin{corollary}\label{coro:euclidean_ds_inequality}
Any odd function $f\colon S_E^k \to S_E^n$ with $k \ge n$ has modulus of discontinuity $\delta(f)\geq 2\sin\left(\tfrac{c_{n,k}}{2}\right)$.
\end{corollary}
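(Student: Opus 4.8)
The plan is to exploit the fact that the modulus of discontinuity of a function into a sphere depends only on the topology of the domain and on the metric of the target, and that replacing the geodesic metric on the target sphere by the Euclidean one rescales every diameter by the strictly increasing continuous bijection $t \mapsto 2\sin(t/2)\colon [0,\pi]\to[0,2]$. Granting this, the Euclidean‐metric modulus of discontinuity of an odd $f$ is obtained from its geodesic‐metric modulus of discontinuity by this same rescaling, and the claimed bound drops out of Theorem~\ref{thm:odd-modulus-discontinuity-bound} together with monotonicity of $t\mapsto 2\sin(t/2)$ and the inequality $c_{n,k}\le\pi$ noted earlier.

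First I would record that, for each $m$, the space $S_E^m$ and the geodesically metrized sphere $S^m$ have the same underlying set, the same antipodal $\Z/2$‐action, and the same topology (the two metrics are even bi-Lipschitz equivalent). Hence an odd function $f\colon S_E^k\to S_E^n$ is literally the same datum as an odd function $f\colon S^k\to S^n$, and the collection of open neighborhoods $U_x$ of a point $x$ entering the definition of $\delta(f)$ is unchanged; only the diameters $\diam(f(U_x))$, taken in the target, depend on which metric we use. Writing $\diam_E$ and $\diam_g$ for the Euclidean and geodesic diameters of a subset of $S^n$, the identity $\|x-x'\|=2\sin\!\big(\tfrac{d_{S^n}(x,x')}{2}\big)$ and the monotonicity and continuity of $t\mapsto 2\sin(t/2)$ on $[0,\pi]$ give, for every $A\subseteq S^n$,
\[
\diam_E(A)=2\sin\!\Big(\tfrac{\diam_g(A)}{2}\Big)\qquad\text{equivalently}\qquad \diam_g(A)=2\arcsin\!\Big(\tfrac{\diam_E(A)}{2}\Big),
\]
where $\arcsin$ is applied to a value in $[0,1]$ since a subset of the unit sphere has Euclidean diameter at most $2$.

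Then I would argue by contradiction. Suppose the Euclidean‐metric modulus of discontinuity satisfies $\delta(f)<2\sin(c_{n,k}/2)$. Since this is an infimum strictly below $2\sin(c_{n,k}/2)$, there is a value $\delta<2\sin(c_{n,k}/2)$ such that every $x\in S^k$ admits an open neighborhood $U_x$ with $\diam_E(f(U_x))\le\delta$. As $\delta<2\sin(c_{n,k}/2)\le 2$, we may set $\delta'\coloneqq 2\arcsin(\delta/2)$, and since $\arcsin$ is strictly increasing on $[0,1]$ and $c_{n,k}/2\le\pi/2$, we get $\delta'<c_{n,k}$. For each $x$ the displayed identity gives $\diam_g(f(U_x))=2\arcsin(\diam_E(f(U_x))/2)\le\delta'$, so the geodesic‐metric modulus of discontinuity of the odd function $f\colon S^k\to S^n$ is at most $\delta'<c_{n,k}$, contradicting Theorem~\ref{thm:odd-modulus-discontinuity-bound}. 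Therefore $\delta(f)\ge 2\sin(c_{n,k}/2)$. There is no real obstacle here; the only care needed is the bookkeeping with the infimum and the endpoint behavior of $2\sin(t/2)$ — namely ensuring that $\arcsin$ is evaluated within $[0,1]$ and that the strict inequality for $\delta$ transfers to a strict inequality for $\delta'$ — both of which are guaranteed because $t\mapsto 2\sin(t/2)$ is a strictly increasing bijection $[0,\pi]\to[0,2]$ and $c_{n,k}\le\pi$.
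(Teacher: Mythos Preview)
Your proposal is correct and takes essentially the same approach as the paper: the paper simply states that the corollary follows from Theorem~\ref{thm:odd-modulus-discontinuity-bound} via the identity $\|x-x'\|=2\sin\big(\tfrac{d_{S^n}(x,x')}{2}\big)$, and your argument is precisely a careful unpacking of that one-line reduction. If anything, you could streamline by noting directly that $\diam_E(A)=2\sin(\diam_g(A)/2)$ and the monotone bijection $t\mapsto 2\sin(t/2)$ together give $\delta_E(f)=2\sin(\delta_g(f)/2)$, whence $\delta_g(f)\ge c_{n,k}$ immediately yields the claim; the contradiction detour is unnecessary but not wrong.
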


When $k= n+1$, the above corollary recovers the Dubins--Schwarz inequality in~\cite{dubins1981equidiscontinuity}, which states $\delta(f)\geq 2\sin\left(\tfrac{r_n}{2}\right)$ for maps $f\colon S_E^{n+1} \to S_E^n$.
In~\cite[Proposition~9.16]{lim2023gromov}, the Dubins--Schwarz inequality is combined with a Euclidean ``helmet trick'' (Lemma~\ref{lem:euclidean_helmet} below) to provide a lower bound on the Gromov--Hausdorff distance between the Euclidean spheres $S_E^{k}$ and $S_E^{n}$ for any integers $k>n\geq 1$.
This lower bound depends on $n$ but not on $k$, and so we instead use Corollary~\ref{coro:euclidean_ds_inequality} to obtain a lower bound depending on both $n$ and $k$.
Indeed, the following result recovers~\cite[Proposition~9.16]{lim2023gromov} when $k = n+1$ and obtains a refinement when $k> n+1$.

\begin{proposition}\label{prop:euclidean_gh_lower_bound}
For all integers $k\ge n$, we have $2\cdot d_\gh(S_E^n, S_E^k) \geq 2 - 2\cos\left(\frac{c_{n, k}}{2}\right)$.
\end{proposition}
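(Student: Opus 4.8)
The plan is to reuse the strategy behind \cite[Proposition~9.16]{lim2021gromov} (which is the case $k=n+1$), but to feed in the \emph{generalized} Euclidean Dubins--Schwarz bound, Corollary~\ref{coro:euclidean_ds_inequality}, in place of the classical one; this is the only step where the dimension $k$ enters, and it is what upgrades a $k$-independent bound to one depending on both $n$ and $k$.

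First I would reduce to a single odd function. By \eqref{eq:dgh} applied with the Euclidean metric, $2\cdot d_{\gh}(S_E^n,S_E^k)\ge \inf\{\dis(h)\mid h\colon S_E^k\to S_E^n\}$, so it suffices to show $\dis(h)\ge 2-2\cos\!\big(\tfrac{c_{n,k}}{2}\big)$ for an arbitrary (possibly discontinuous) function $h\colon S_E^k\to S_E^n$. Fix such an $h$ and put $\beta\coloneqq\dis(h)$. Since the Euclidean diameter of the unit sphere is $2$, we have $\beta\le 2$; and if $k=n$ there is nothing to prove, so assume $k>n$.

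Next I would apply the Euclidean helmet trick (Lemma~\ref{lem:euclidean_helmet}) to $h$ to obtain an \emph{odd} function $f\colon S_E^k\to S_E^n$. The crucial point is the identity $\|x-(-x')\|^2 = 4-\|x-x'\|^2$ for unit vectors $x,x'$, which makes the antipodal symmetrization underlying the helmet trick distort pairwise Euclidean distances in a controlled way; tracking this (exactly as in \cite[Proposition~9.16]{lim2021gromov}) gives $\delta(f)\le\dis(f)\le\sqrt{\beta(4-\beta)}$, where I also used that the modulus of discontinuity never exceeds the distortion. Since $f$ is odd, Corollary~\ref{coro:euclidean_ds_inequality} gives $\delta(f)\ge 2\sin\!\big(\tfrac{c_{n,k}}{2}\big)$, hence $\sqrt{\beta(4-\beta)}\ge 2\sin\!\big(\tfrac{c_{n,k}}{2}\big)$.

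Finally I would solve this for $\beta$. With $v\coloneqq 2-2\cos\!\big(\tfrac{c_{n,k}}{2}\big)$ one has $v(4-v)=4-4\cos^2\!\big(\tfrac{c_{n,k}}{2}\big)=4\sin^2\!\big(\tfrac{c_{n,k}}{2}\big)$, so squaring the previous inequality gives $\beta(4-\beta)\ge v(4-v)$. Because $c_{n,k}\le\pi$ (as $\vr{S^n}{\pi}$ is contractible), $\tfrac{c_{n,k}}{2}\le\tfrac\pi2$ and thus $v\in[0,2]$; since also $\beta\in[0,2]$ and $t\mapsto t(4-t)$ is strictly increasing on $[0,2]$, this forces $\beta\ge v$. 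Taking the infimum over all $h$ yields $2\cdot d_{\gh}(S_E^n,S_E^k)\ge 2-2\cos\!\big(\tfrac{c_{n,k}}{2}\big)$, as desired. I expect the only real subtlety here---granting Corollary~\ref{coro:euclidean_ds_inequality} and the Euclidean helmet trick Lemma~\ref{lem:euclidean_helmet}, in whose proof the work of passing from an arbitrary function to an odd one resides---to be the bookkeeping in the last step: the Dubins--Schwarz-type estimate is naturally phrased via $\sin$ whereas the Gromov--Hausdorff estimate comes out via $1-\cos$, and it is precisely the monotonicity of $t\mapsto t(4-t)$ on $[0,2]$ that lets one pass between the two after squaring.
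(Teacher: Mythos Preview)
Your proof is correct and follows essentially the same approach as the paper: reduce via \eqref{eq:dgh} to bounding $\dis(h)$ for an arbitrary $h\colon S_E^k\to S_E^n$, apply the Euclidean helmet trick (Lemma~\ref{lem:euclidean_helmet}) to obtain an odd function, invoke Corollary~\ref{coro:euclidean_ds_inequality}, and then solve the resulting inequality. The only cosmetic difference is in the final algebra: the paper rearranges $\dis(\phi^*)\le\sqrt{\dis(\phi)(4-\dis(\phi))}$ directly into $\dis(\phi)\ge 2-\sqrt{4-(\dis(\phi^*))^2}$ and then substitutes, whereas you phrase the same step as monotonicity of $t\mapsto t(4-t)$ on $[0,2]$.
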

Recall that Theorem~\ref{thm:proj-packings} implies $\lim_{k\to \infty} c_{n, k} = \pi$.
Meanwhile, it is shown in~\cite[Remark~9.1]{lim2023gromov} that $d_\gh(S_E^n, S_E^k)\leq 1$ for any integers $0\leq n< k$.
Therefore, the bound in the above proposition is asymptotically tight in the sense that $\lim_{k\to \infty}d_\gh(S_E^k, S_E^n) = 1$.
Our proof of Proposition~\ref{prop:euclidean_gh_lower_bound} will use the following ``helmet trick'' for Euclidean spheres.

\begin{lemma}[Lemma~9.14 in~\cite{lim2023gromov}]
\label{lem:euclidean_helmet}
For any $k, n \geq 0$, let the set $\varnothing \neq C \subseteq S_E^k$ satisfy $C \cap(-C)=\varnothing$, and let $\phi\colon C \to S_E^n$ be any function.
Then, the extension $\phi^* \colon C \cup(-C) \to S_E^n$ defined by
\[ \phi^*(x) = \begin{cases}
\phi(x) & \text{if }x\in C \\
-\phi(-x) & \text{otherwise}
\end{cases} \]
is odd and satisfies the distortion bound $\dis\left(\phi^*\right) \leqslant \sqrt{\dis(\phi)\left(4-\dis(\phi)\right)}$.
\end{lemma}

We are now ready to prove Proposition~\ref{prop:euclidean_gh_lower_bound}.

\begin{proof}
For any function $\phi\colon S_E^k \to S_E^{n}$, Lemma~\ref{lem:euclidean_helmet} guarantees that the existence of an odd function $\phi^*\colon S_E^k \to S_E^{n}$ such that $\dis\left(\phi^*\right) \leqslant \sqrt{\dis(\phi)\left(4-\dis(\phi)\right)}$.
We rearrange this bound by completing the square, using the fact that both $\dis(\phi)$ and $\dis(\phi^*)$ are bounded above by $2$ (the Euclidean diameter of unit spheres), obtaining $\dis(\phi)\geq 2 - \sqrt{4 - (\dis(\phi^\ast))^2}$.
As $0\leq \delta(\phi^\ast)\leq \dis(\phi^\ast)$, we have $\dis(\phi)\geq 2 - \sqrt{4 - (\delta(\phi^\ast))^2}$.
Thus, we use Corollary~\ref{coro:euclidean_ds_inequality} to obtain
\[
\dis(\phi) \geq 2 - \sqrt{4 - 4 \sin^2\left(\tfrac{c_{n, k}}{2}\right)} = 2- 2\sqrt{1- \sin^2\left(\tfrac{c_{n, k}}{2}\right)} =  2 - 2\cos\left(\tfrac{c_{n, k}}{2}\right)
\]
where in the last step we use the fact that $0\leq c_{n,k}\leq \pi$.
The result now follows from \eqref{eq:dgh}.
\end{proof}

\section{Conclusion and Questions}
\label{sec:conclusion}

As described in this article, the topology of Vietoris--Rips complexes of spheres are closely related to Gromov--Hausdorff distances between spheres, to approximate versions of the Borsuk--Ulam theorems for maps into lower- and higher-dimensional codomains, and to packings and coverings in projective space.
We expect similar relationships to be true for Vietoris--Rips complexes of other spaces.
We conclude this article with a list of open questions that we hope will attract interest from readers.

\begin{question}
\label{ques:upper}
There are not yet any known values of $n$ and $k$ where the lower bound 
$2\cdot d_{\gh}(S^n, S^k) \geq c_{n,k}$
is not an equality, and therefore it is reasonable to ask if this lower bound could be an equality in general.
In particular, can we find better functions $g\colon S^n \to S^k$ and $h\colon S^k \to S^n$ of low distortion and codistortion, providing upper bounds on the Gromov--Hausdorff distance between spheres?
By~\cite[Remark~1.1]{lim2023gromov}, it suffices to find a surjective function $S^k \to S^n$ of bounded distortion.

For example, can we find surjective functions $S^{2k} \to S^1$ and $S^{2k+1} \to S^1$ of distortion at most $\frac{2\pi k}{2k+1}$, in order to show that our lower bounds $2\cdot d_\gh(S^1,S^{2k})\ge \frac{2\pi k}{2k+1}$ and $2\cdot d_\gh(S^1,S^{2k+1})\ge \frac{2\pi k}{2k+1}$ are tight?
Can we find surjective functions $S^{n+1} \to S^n$ and $S^{n+2} \to S^n$ of distortion at most $r_n$, to show that the lower bounds $2\cdot d_\gh(S^n,S^{n+1})\ge r_n$ and $2\cdot d_\gh(S^n,S^{n+2})\ge r_n$ from~\cite{lim2023gromov} are tight?
In other words, we ask:
\begin{align*}
\text{Is\ \ }2\cdot d_{\gh}(S^1,S^{2k})=&\tfrac{2\pi k}{2k+1}=2\cdot d_{\gh}(S^1,S^{2k+1})? \\
\text{Is\ \ }2\cdot d_{\gh}(S^n,S^{n+1})=&\ \ r_n\ \ =2\cdot d_{\gh}(S^n,S^{n+2})?
\end{align*}
See~\cite{harrison2023quantitative,memoli2024embedding} for follow-up work on this question.
\end{question}

\begin{question}
\label{ques:other-direction}
In this paper, we have primarily thought of our Main Theorem and Theorem~\ref{thm:odd-modulus-discontinuity-bound} as providing lower bounds on the distortion of a function $f\colon S^k \to S^n$, or on the modulus of discontinuity of such a function, or on the Gromov--Hausdorff distance between $S^n$ and $S^k$, respectively.
However, one could also apply these results in the opposite direction in order to obtain new knowledge about Vietoris--Rips complexes of spheres.
Is it possible to find a function $f\colon S^k \to S^n$ of low distortion $r$ or low modulus of discontinuity $r$, in order to prove a new upper bound of the form $\coind(\vr{S^n}{r}) \ge k$ for a smaller value of $r$ than was previously known?
Similarly, is it possible to find functions showing $2\cdot d_\gh(S^n,S^k)$ is at most $r$ with $r$ small, in order to prove a new upper bound of the form $\coind(\vr{S^n}{r}) \ge k$?
If so, then~\cite[Theorem~3]{ABF2} would furthermore imply that there is no covering of $\RP^n$ by $k$ balls of radius $r$.
\end{question}

\begin{question}
Theorem~\ref{thm:odd-modulus-discontinuity-bound-tight} finds an odd function $f\colon S^k \to S^n$ with modulus of discontinuity arbitrarily close to $c_{n,k}$, showing that Theorem~\ref{thm:odd-modulus-discontinuity-bound} is tight.
Do there exist odd functions $f\colon S^k \to S^n$ with distortion arbitrarily close to $c_{n,k}$, which would show that the second inequality in our Main Theorem is tight?
\end{question}

\begin{question}
\label{ques:modulus-Euclidean}
Theorem~\ref{thm:odd-modulus-discontinuity-bound} lower bounds the modulus of discontinuity of odd functions $S^k \to S^n$ with $k>n$, and can be viewed as a discontinuous generalization of the Borsuk-Ulam theorem as stated in Theorem~\ref{thm:borsuk-ulam-odd}.
Can we obtain similar discontinuous generalizations of other equivalent formulations of the Borsuk-Ulam theorem, e.g., Theorem~\ref{thm:borsuk-ulam} and Theorem~\ref{thm:borsuk-ulam-alternate}?
Results in this direction will appear in an upcoming paper~\cite{adams2022quantifying}.
\end{question}

\begin{question}
\label{ques:GH-equivariance-Z/2}
Can we provide lower bounds on the Gromov--Hausdorff distances between more general families of $\Z/2$ metric spaces by obstructing the existence of equivariant maps to their Vietoris--Rips complexes?
In particular, can we generalize the sphere $S^{k}$ in our Main Theorem to a more general class of $(k - 1)$-connected $\Z/2$ spaces?

Theorem~\ref{thm:odd-modulus-discontinuity-bound-generalized} provides an answer to the analogous question for Theorem~\ref{thm:odd-modulus-discontinuity-bound}, our primary theorem bounding the modulus of discontinuity, replacing the $k$-sphere with a space of $Z/2$ coindex at least $k$.
Does an analogous generalization exist for 
the distortion bound in our Main Theorem?
\end{question}

\begin{question}
\label{ques:GH-equivariance-otherGroups}
Can one generalize our results to groups $G$ other than $G = \Z/2$?
As a start, one could attempt to extend our results from $\Z/2$ spaces, $\Z/2$ functions, and $\Z/2$ maps, to $\Z/p$ spaces, $\Z/p$ functions, and $\Z/p$ maps for other primes $p$.
Results in this direction will appear in upcoming papers~\cite{adams2022quantifying,lim2022GGHdist}.
\end{question}

\begin{question}
\label{ques:GH-projective}
Can our methods be used to obtain bounds on the Gromov--Hausdorff distances between spaces from families other than spheres, such as real projective space $\RP^n$, complex projective space $\CP^n$, ellipses with different eccentricity values, or ellipsoids of different dimensions or with different axis lengths?
What about Gromov--Hausdorff distances between spaces that each come from a different family, e.g.\ $d_\gh(\RP^n, S^n)$?
The first new homotopy type of the Vietoris--Rips metric thickening of $\RP^n$ is given in~\cite{AdamsHeimPeterson}.
Katz has studied the filling radius of complex projective spaces $\CP^n$~\cite{katz1983filling,katz9filling,katz1991neighborhoods,katz1991rational}, which is closely related to Vietoris--Rips complexes by~\cite{lim2020vietoris}.
The first new homotopy type of Vietoris--Rips complexes of small-eccentricity ellipses is given in~\cite{AAR}.
\end{question}

\begin{question}
\label{ques:finite}
What lower bound can we obtain for the Gromov-Hausdorff distances $d_{\gh}(X,S^k)$ and $d_{\gh}(X,Y)$, where $X$ and $Y$ are finite sets?
For example, what lower bounds can we get for the Gromov-Hausdorff distance $d_{\gh}(Q_n,S^k)$ and $d_\gh(Q_n,Q_k)$, where $Q_n$ is the vertex set of the hypercube graph, equipped with one of several natural choices of metric?
See~\cite{carlsson2020persistent,adamaszek2021vietoris,shukla2023vietoris} for recent papers on the Vietoris--Rips complexes of the hypercubes $Q_n$.
\end{question}

\begin{question}
\label{ques:DSgen-BU}
The Borsuk--Ulam theorem has many different corollaries.
Can our generalization of the Dubins--Schwarz inequality provide new generalizations of some of these corollaries?
Some results in this direction will appear in an upcoming paper~\cite{adams2022quantifying}.
\end{question}

\begin{question}
\label{ques:approx-Z2}
Let $X$ be a metric space that is ``approximately'' a $\Z/2$ space, by which we mean that acting by the generator twice returns a function that is not the identity map on the nose, but only close to being the identity map.
Is there a version of the Dubins--Schwarz inequality for ``approximate $\Z/2$ spheres''?
Can our machinery be adapted to provide bounds on the Gromov--Hausdorff distances between ``approximate'' $\Z/2$ metric spaces?
\end{question}

\begin{question}
Below we outline several fundamental open questions regarding the dependence of Gromov--Hausdorff distances between spheres on the dimensions of the spheres in question.
Several questions ask about upper bounds, which may be useful for working in the opposite direction of our results, i.e.\ upper bounds on Gromov--Hausdorff distances may be useful for drawing new conclusions about the homotopy connectivity of Vietoris--Rips complexes or packings and coverings in projective space.
We set $k>n$ in each question below.
\begin{itemize}
    \item
    We might already conjecture that $d_\gh(S^n,S^{n+1})=r_n=d_\gh(S^n,S^{n+2})$ based on existing bounds.
    However, Crabb~\cite[Theorem~3.1]{crabb2023borsuk} recently used characteristic classes and the cohomology of quotients of classical groups~\cite{baum1965cohomology} to prove that when $n+1$ is divisible by a large power of 2, then the diameter bound $r_n$ in~\cite[Theorem~3]{ABF} also works for maps $f\colon S^n\to\R^k$ into a selected number of higher dimensions $k$ with $k>n+2$.
    This motivates us to ask: Does there exist $n$ and $k > n+2$ so that $d_\gh(S^n,S^k)=r_n$?
    \item Is it always true that $d_\gh(S^n, S^k) \le d_\gh(S^n, S^{k+1})$? 
    That is, does $d_\gh$ increase monotonically as we move to the right in any row in Table~\ref{table:gh}?
    Note, the first row shows we \emph{cannot} guarantee a strict increase.
    This is Question I of Lim, Memoli, and Smith~\cite{lim2023gromov}.
    \item Is it always true that $d_\gh(S^n, S^k) \ge d_\gh(S^{n+1}, S^{k})$?
    That is, does $d_\gh$ decrease monotonically as we move downward in any column of Table~\ref{table:gh}?
    \item Is it always true that $d_\gh(S^n, S^k) \le d_\gh(S^{n+1}, S^{k+1})$?
    That is, does $d_\gh$ decrease monotonically as we follow any off-diagonal in Table~\ref{table:gh}?
    \item For fixed $m\ge 1$, is $2\cdot d_\gh(S^n, S^{n+m})$ bounded away from $\pi$ as $n\to\infty$?
    Theorem~\ref{thm:super-diag-upper-bound} gives an affirmative answer for $m=1$, and current bounds on $c_{n, n+m}$ (see~\cite[Corollary 3.2]{ABF2}) do not preclude an affirmative answer in general.
    \item For any $\delta>\frac{\pi}{2}$ and integer $m\ge 1$, does there exist a sufficiently large integer $N$ such that
    \[ 2\cdot d_\gh(S^n,S^k)\le \delta \text{ for all } n\ge N \text{ and } n<k\le n+m?\]
    Again, our lower bounds $c_{n,k}$ satisfy this property by~\cite[Corollary 3.2]{ABF2}.
\end{itemize}
\end{question}

\begin{question}
Definition~5.2.7 of~\cite{BushThesis} defines $s_{n,k}$ to be the infimal value of $r$ such that, for any odd map $f\colon S^n \to \R^k$ with $k\ge n$, there exists a subset $X\subseteq S^n$ of diameter at most $r$ such that the origin is in the convex hull of the image $f(X)\subseteq \R^k$.
See~\cite[Table on Page~80]{BushThesis} for the known values of $s_{n,k}$ and note the similarities with Table~\ref{table:gh}.
It is known that for $k\ge n$, we have $s_{n,k}\le c_{n,k}$.
Indeed, let $f\colon S^n \to \R^k$ be any odd map, and suppose $r > c_{n,k}$, i.e., suppose there is an odd map $S^k \to \vr{S^n}{r}$.
The map $f$ induces an odd map $\vr{S^n}{r}\to \R^k$.
By composition, we obtain an odd map $S^k \to \vr{S^n}{r}\to \R^k$.
We apply the standard Borsuk--Ulam theorem to this odd map $S^k\to\R^k$ to see that there is a point in $\vr{S^n}{r}$ that maps to the origin in $\R^k$, i.e.\ that there is a subset $X\subseteq S^n$ of diameter at most $r$ such that the origin is in the convex hull of $f(X)$.
Hence, $r \ge s_{n,k}$ and it follows that $s_{n,k}\le c_{n,k}$.
Could it be the case that $s_{n,k}=c_{n,k}$ for all $k\ge n$, or if not, what are the smallest values of $n$ and $k$ for which these quantities differ?
\end{question}

\bibliographystyle{abbrv}
\bibliography{GH-BU-VR.bib}

\end{document}